\numberwithin{equation}{section}
\newtheorem*{introthm*}{Theorem}
\newtheorem{theorem}{Theorem}[section]
\newtheorem{lemma}[theorem]{Lemma}
\newtheorem{proposition}[theorem]{Proposition}
\newtheorem{corollary}[theorem]{Corollary}
\newtheorem{claim}[theorem]{Claim}
\theoremstyle{definition}
\newtheorem{definition}[theorem]{Definition}
\newtheorem{conjecture}[theorem]{Conjecture}
\newtheorem{def-prop}[theorem]{Definition-Proposition}
\newtheorem{remark}[theorem]{Remark}
\newtheorem{example}[theorem]{Example}
\newtheorem*{acknowledgement}{Acknowledgements}
\DeclareMathOperator{\reg}{reg}
\DeclareMathOperator{\Ass}{Ass}
\DeclareMathOperator{\Min}{Min}
\renewcommand{\AA}{{\mathbb A}}
\newcommand{\CC}{{\mathbb C}}
\newcommand{\PP}{{\mathbb P}}
\newcommand{\NN}{{\mathbb N}}
\newcommand{\XX}{{\mathbb X}}
\newcommand{\kk}{{\mathbbm k}}
\def\L{{\mathcal L}}
\def\mm{{\frak m}}
\def\pp{{\frak p}}
\def\a{{\bf a}}
\def\p{{\bf p}}
\def\x{{\bf x}}
\def\z{{\bf z}}
\def\ahat{\widehat{\alpha}}
\def\1{{\bf 1}}
\def\0{{\bf 0}}
\begin{document}
	
\title{Chudnovsky's Conjecture and the stable Harbourne-Huneke containment for general points}

\author{Sankhaneel Bisui}
\address{University of Manitoba \\ Department of Mathematics \\
	Machray Hall 420, 186 Dysart Rd.\\ Winnipeg, MB R3T 2M8, CA}
\email{Sankhaneel.Bisui@umanitoba.ca} 

\author{Th\'ai Th\`anh Nguy$\tilde{\text{\^e}}$n}
\address{Tulane University, Department of Mathematics,
	6823 St. Charles Ave., New Orleans, LA 70118, USA \\
	and University of Education, Hue University, 34 Le Loi St., Hue, Viet Nam}
\email{tnguyen11@tulane.edu}

\keywords{Chudnovsky's conjecture, Cremona Transformation, Waldschmidt Constant, Ideals of Points, Symbolic Powers, Containment Problem, Stable Harbourne-Huneke Conjecture}
\subjclass[2010]{14N20, 13F20, 14C20}

\begin{abstract}
In our previous work with Grifo and H\`a, we showed the stable Harbourne-Huneke containment and Chudnovsky's conjecture for the defining ideal of sufficiently many \emph{general} points in $\PP^N$. In this paper, we establish the conjectures for \textit{all} remaining cases, and hence, give the affirmative answer to Harbourne-Huneke containment and Chudnovsky's conjecture for \textit{any number of general points} in $\PP^N$ for all $N$. Our new technique is to develop the Cremona reduction process that provides effective lower bounds for the Waldschmidt constant of the defining ideals of \emph{generic} points in projective spaces. 
\end{abstract}
\maketitle
\section{introduction} \label{sec.intro}
In the work for providing counterexamples to Hilbert's $14^{th}$-problem Nagata asked the following question: \emph{Take a set of reduced points $\XX=\{P_1,  \dots, P_s\} \subset \mathbb{P}^2_{\mathbb{C}}.$ What is the minimal degree $\alpha_m(\XX)$ of a hypersurface that passes through the given points with multiplicity at least $m$?} Nagata conjectured that for at least $10$ \emph{general} points, $\alpha_m(\XX) \geqslant m\sqrt{s}, \text{ for each } m \geqslant 1$, and proved it for $k^2$ many general points (the open condition depends on $m$). The conjecture is still wide open and a vast number of papers in the last few decades are related to this conjecture. Later on, Iarrobino \cite{iarrobino_1997} conjectured that $\alpha_m (\XX) \geqslant m\sqrt[N]{s},$ for sufficiently large number of \emph{general} points in $\PP^N$. The only known evidence for this conjecture due to Evain \cite{EVAIN2005516}, for $s=k^N$ many general points, when $N \geqslant 3, k \geqslant 3$. These conjectures are equivalent to saying that all the inequalities (for all $m$) hold for (sufficiently many) \textit{very general} points.

On the other hand, interests for the study of $\alpha_m (\XX)$ came from other various contexts. We refer interested readers to \cite{CHHVT2020} for more information. A more classical motivation of this study is in the context of complex analysis, see \cite{Chudnovsky1981}, \cite{Moreau1980}. In particular, there have been various studies to get effective lower bounds for $\alpha_m(\XX)$. Waldschmidt \cite{Waldschmidt} and Skoda \cite{Skoda} proved the inequality $\dfrac{\alpha_m(\XX)}{m} \geqslant \dfrac{\alpha(\XX)}{N}$ for points in $\PP^N_\CC$ using complex analytic techniques where $\alpha(\XX)$ denotes the least degree of a hypersurface that passes through the points at least one time. Chudnovsky\cite{Chudnovsky1981} improved the bound for points $\PP^2_{\CC}$, by proving that
$\dfrac{\alpha_m(\XX)}{m} \geqslant \dfrac{\alpha(\XX)+1}{2}.$ 
In the same paper, he conjectured the following inequality for a general set of points in $\PP^N_\CC$, 
\begin{conjecture}\cite{Chudnovsky1981}
	If $\XX=\{P_1,  \dots, P_s\} \subset \mathbb{P}^N_{\mathbb{C}} $, then
	$$\dfrac{\alpha_m(\XX)}{m} \geqslant \dfrac{\alpha(\XX)+N-1}{N}, \text{ for all } m \geqslant 1.$$
\end{conjecture}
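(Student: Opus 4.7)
The plan is to establish Chudnovsky's conjecture for \emph{general} configurations $\XX$ of $s$ points in $\PP^N$, which is the setting of the paper. Previous work with Grifo and H\`a already resolves the case $s \geq s_0(N)$ for an explicit threshold, so the task is every remaining smaller value of $s$. By the standard subadditivity $\alpha_{m_1+m_2}(\XX) \leq \alpha_{m_1}(\XX) + \alpha_{m_2}(\XX)$, the Waldschmidt constant satisfies
\[ \hat\alpha(\XX) \;=\; \lim_{m\to\infty}\frac{\alpha_m(\XX)}{m} \;=\; \inf_m \frac{\alpha_m(\XX)}{m}, \]
so Chudnovsky's per-$m$ inequality is equivalent to the single bound $\hat\alpha(\XX) \geq (\alpha(\XX)+N-1)/N$, and it is this that I would target.

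Next I would pass from general to \emph{generic} points. Upper semicontinuity of the Hilbert function of $R/I^{(m)}$ in the coordinates of the points gives $\alpha_m(\text{general}) = \alpha_m(\text{generic})$, where by ``generic'' I mean the configuration with coordinates indeterminate over $\CC$. Working generically is crucial because every $N+1$ of the points sit in linearly general position, so Cremona transformations based at any such subset are available without destroying genericity. The central step is then an iterative Cremona reduction: whenever the relevant Cremona inequality holds---so that a form of degree $d$ vanishing to orders $m_i$ at the $N+1$ centers has a transformed image of strictly smaller degree---a single Cremona step replaces $\XX$ by a new generic configuration $\XX'$ on which the extremal degree of $I^{(m)}$ is strictly smaller. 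I would iterate this, tracking how $\alpha_m$ transforms under each step, and drive the configuration either into the Grifo--H\`a regime $s \geq s_0(N)$, or into a small residual case $s \leq N+1$ where the points span a linear subspace and the inequality is immediate. Concatenating the per-step transfer inequalities then yields the desired lower bound on $\hat\alpha(\XX)$.

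The main obstacle I anticipate is quantitative sharpness of the Cremona iteration in the borderline regime $N+2 \leq s < s_0(N)$. Here a single Cremona step may fail to decrease the degree or does so only slowly, so the $N+1$ centers must be chosen adaptively, and the action of the symmetric group on the generic points should be used to symmetrize partial estimates. The accounting of degree versus total multiplicity must be calibrated carefully so that the constant $(N-1)/N$ does not deteriorate across the iteration, which is precisely where Chudnovsky's constant appears. A secondary technical point is confirming that the Cremona transfer formulas for symbolic powers are valid in the generic setting; this I would handle by pulling back to the blow-up of $\PP^N$ at $N+1$ generic points and reading off the statement on strict transforms of hypersurfaces. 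If these pieces cooperate, the bound propagates down from the Grifo--H\`a threshold and clears every remaining $s$ uniformly in $N$.
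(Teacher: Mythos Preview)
Your plan has two genuine gaps. First, the semicontinuity argument you sketch only gives $\alpha_m(\text{general}) = \alpha_m(\text{generic})$ for each \emph{fixed} $m$; the open set depends on $m$, so intersecting over all $m$ lands you at \emph{very general} points, not \emph{general} ones. The paper stresses this distinction, and its route to a single open condition is essential: one proves the \emph{strict} inequality $\hat\alpha(I(\z)) > (\reg I(\z)+N-1)/N$ for generic points, deduces the containment $I(\z)^{(Nc-N)} \subseteq \mm_\z^{c(N-1)}I(\z)^c$ for one fixed large $c$, specializes that single containment (one open set), and then bootstraps via \cite[Corollary 3.2]{bghn2021chudnovskys} to all $r\gg 0$. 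Chudnovsky then follows by comparing initial degrees. Your proposal bypasses both the $\reg$ bound and the containment step, so it cannot reach ``general''.

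Second, your description of the Cremona iteration is not how the reduction works. A standard Cremona based at $N+1$ of the points does not change $s$, so you cannot ``drive the configuration into the regime $s\geqslant s_0(N)$'' or into $s\leqslant N+1$. In the paper, Cremona is used in the opposite spirit: for a \emph{fixed small} $s$ one assumes $I(m^{\times s})_{d}\neq 0$ and applies Lemma~\ref{lemma: reduction of multiplicity of points} (a careful case analysis handling negative new multiplicities) to reach a contradiction, thereby certifying explicit lower bounds such as $\hat\alpha(8)\geqslant 8/5$ in $\PP^4$. Separately, a collision lemma (Lemma~\ref{lemma: adding multiplicities Dumnicki}) replaces $2^N$ simple points by one double point, yielding $\hat\alpha(b\cdot 2^N)\geqslant 2\,\hat\alpha(b)$; combined with the Waldschmidt-decomposition induction of Lemma~\ref{lemma: inductionlemma}, this covers all remaining $s$. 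None of this is the ``degree-decreasing iteration converging to a known regime'' that you outline, and the quantitative sharpness you flag as an obstacle is exactly where a vague iteration would fail.
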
 

All these geometric problems can be re-stated in an algebraic way using the well-celebrated Zariski-Nagata Theorem (\cite{Zariski,Nagata,EisenbudHochster}). More precisely, finding lower bounds for $\alpha_m(\XX)$ is equivalent to searching for lower bounds for $\alpha \big(I^{(m)} \big)$, where $I$ is the defining ideal of $\XX$, $I^{(m)}$ denotes the $m$-th symbolic power of $I$, and $\alpha(J)$ denotes the initial degree of a homogeneous ideal $J$. Thus, Chudnovsky's conjecture takes the following equivalent format:
\begin{conjecture}[Chudnovsky's Conjecture]
	Let $\XX=\{P_1,  \dots, P_s\} \subset \mathbb{P}^N_{\mathbb{C}} $ and $I$ be the defining ideal of $\XX$. Then   
	$$\dfrac{\alpha\big(I^{(m)} \big)}{m} \geqslant \dfrac{\alpha(I)+N-1}{N}, \text{ for all } m \geqslant 1.$$
\end{conjecture}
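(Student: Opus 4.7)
The plan is to deduce Chudnovsky's inequality from a stable ideal containment of Harbourne--Huneke type, and then to try to establish that containment for an \emph{arbitrary} configuration $\XX$ by specializing from the general-points case that this paper handles.

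The outer layer is a formal reduction. Suppose one can show the stable Harbourne--Huneke containment
\[
I^{(Nm)} \;\subseteq\; \mathfrak{m}^{(N-1)m}\, I^{m}
\]
for all sufficiently large $m$. Taking initial degrees gives $\alpha(I^{(Nm)})\geq (N-1)m+m\,\alpha(I)$, hence $\alpha(I^{(Nm)})/(Nm)\geq (\alpha(I)+N-1)/N$. By Fekete's lemma the Waldschmidt constant
\[
\hat\alpha(I) \;=\; \lim_{m\to\infty}\frac{\alpha(I^{(m)})}{m} \;=\; \inf_{m\geq 1}\frac{\alpha(I^{(m)})}{m}
\]
inherits the same lower bound, and since $\alpha(I^{(m)})/m\geq \hat\alpha(I)$ holds unconditionally for every $m$, Chudnovsky's inequality follows for all $m\geq 1$. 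So the problem becomes: establish the stable containment for arbitrary $\XX\subset\PP^N$.

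For the containment itself I would attack by specialization. Realize $\XX$ as the special fibre of a flat family $\{\XX_t\}_{t\in T}$ whose generic fibre $\XX_\eta$ is a set of $s$ general points in $\PP^N$; the main theorems of the present paper (extending the joint work with Grifo and H\`a) supply the stable Harbourne--Huneke containment for $I_\eta$. I would then try to propagate the containment to $I$ by (a) showing that the symbolic Rees algebra $\bigoplus_{m\geq 0} I_t^{(m)}$ fits into a flat family over a dense open of $T$, (b) using upper semicontinuity of Hilbert functions to transport the containment of graded pieces, and (c) taking the flat limit $t\to 0$. When direct specialization does not close the case---for instance when $\XX$ sits in special position on a low-degree hypersurface $F$---I would fall back on a restriction/induction on $N$, using the exact sequence $0\to I(-\deg F)\to I\to I|_F\to 0$ to reduce to the residual configuration together with $\XX\cap F$, and invoking Chudnovsky inductively in $\PP^{N-1}$.

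The principal obstacle is that semicontinuity cuts \emph{against} this strategy: $\alpha(I^{(m)})$ can only decrease under specialization, so a containment for $I_\eta$ does not automatically descend to $I$, and the inequality we are trying to preserve can be lost in the limit. To salvage the argument one would have to verify that any drop in $\alpha(I^{(Nm)})$ along the family is exactly matched by a proportional drop in $\alpha(I)$, so that the ratio $(\alpha(I)+N-1)/N$ is preserved under flat degeneration. Controlling these two invariants simultaneously---or, failing that, classifying the degenerations for which the ratio \emph{does} deteriorate and handling them by an ad~hoc argument (e.g.\ reduction to star configurations, complete intersections, or lower-dimensional instances)---is where genuinely new input beyond the Cremona reduction technique developed here would be required, and is exactly why Chudnovsky's conjecture remains open for arbitrary point sets at present.
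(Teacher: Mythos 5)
The statement you are addressing is, as the paper itself labels it, a \emph{conjecture}: the paper does not prove Chudnovsky's inequality for every configuration $\XX\subset\PP^N_\CC$, only for a general configuration (Theorem~\ref{theorem: Chudnovsky for small points}), and the final remark of the paper explicitly records that the conjecture ``is still wide open'' for arbitrary point sets. Your outer layer --- deriving Chudnovsky from the stable Harbourne--Huneke containment $I^{(Nr)}\subseteq \mm^{(N-1)r}I^r$ for $r\gg 0$ by taking initial degrees and passing to the Waldschmidt constant --- is exactly the reduction the paper uses in Theorem~\ref{theorem: Chudnovsky for small points}, and is correct as far as it goes. But the inner layer, namely establishing the stable containment for an arbitrary $\XX$, has a genuine gap that you yourself correctly diagnose: specialization goes the wrong way. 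Degenerating a general configuration to a special one only gives lower semicontinuity of the graded pieces of $I_t^{(m)}$, so $\alpha(I^{(Nm)})$ (and separately $\alpha(I)$) can both drop at the special fibre, and there is no way to control them simultaneously so that the ratio $(\alpha(I)+N-1)/N$ survives the limit. This is not a technical nuisance to be ``salvaged''; it is the entire open content of the conjecture for arbitrary points.

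It is also worth contrasting the specialization you propose with the one the paper actually performs. In Theorem~\ref{theorem: Containment for general points} the direction is from \emph{generic} points $\XX(\z)$ over the transcendental extension $\kk(\z)$ to points $\XX(\a)$ for $\a$ ranging over a Zariski-\emph{open dense} subset $U\subseteq\AA^{s(N+1)}$; via Krull's Satz 2 and 3 one has equalities $\pi_\a(I(\z)^{(m)}) = I(\a)^{(m)}$, $\pi_\a(I(\z)^r) = I(\a)^r$ on $U$, so the containment specializes without loss. That is a specialization that stays within the open locus and yields a \emph{general} statement in the scheme-theoretic sense; it is fundamentally different from degenerating into the boundary of the parameter space to reach an arbitrary $\XX$, which is what your flat-family step (c) requires. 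The latter would require exactly the compatibility of drops in $\alpha(I^{(m)})$ and $\alpha(I)$ that is unavailable. So: the formal reduction and the citation of the general-points result are right; the claim to extend to all $\XX$ is not established, and you have correctly identified rather than closed the gap.
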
 
The containment problem of symbolic and ordinary powers of ideals is very well-studied (see e.g., \cite{HaHu, Seceleanu, GrifoHuneke, GrifoStable,   DrabkinSeceleanu, bghn2021chudnovskys, bghn2022demailly, nguyen2021least, nguyen2021leastlike,SteinerConfigurations}.) One of the important applications to study these containment is the fact that the containment would provide lower bounds on the initial degree of the symbolic powers. Consider the following celebrated Theorem by Ein-Lazarsfled-Smith and Hoschter-Huneke:

\begin{theorem}{\cite{ELS,comparison}} \label{theorem: containment Ihm in Im} 
	For	a radical ideal $I$ of big height $h$ in a regular ring $S$, one has  $I^{(hm)} \subseteq I^m$ for all $m\in \NN$. 
\end{theorem}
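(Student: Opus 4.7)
The plan is to follow the two-pronged strategy of Ein-Lazarsfeld-Smith in characteristic zero and Hochster-Huneke in characteristic $p$, glued together by standard spreading-out arguments to cover an arbitrary regular ring $S$. Since the containment $I^{(hm)} \subseteq I^m$ can be checked after faithfully flat localization, I would first reduce to the case where $S$ is essentially of finite type over a field, so that the tools below apply verbatim; the mixed-characteristic case then follows from the positive-characteristic case by reduction modulo $p$.

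In characteristic zero, I would work with the theory of (asymptotic) multiplier ideals attached to the graded family $\{I^{(k)}\}_{k \geq 1}$. The argument rests on three ingredients: (i) the containment $I^{(k)} \subseteq \mathcal{J}(I^{(k)})$ of each symbolic power inside its asymptotic multiplier ideal; (ii) a subadditivity property of the form $\mathcal{J}(I^{(hm)}) \subseteq \mathcal{J}(I^{(h)})^{m}$ along the symbolic family; and (iii) a Skoda-type containment $\mathcal{J}(I^{(h)}) \subseteq I$, which is the step where the big-height hypothesis $h$ enters decisively, via the fact that the zero locus of $I$ has codimension exactly $h$ at each of its minimal components. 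Chaining these three inclusions yields $I^{(hm)} \subseteq I^m$ directly.

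In characteristic $p > 0$, the target becomes $I^{(hm)} \subseteq (I^m)^{*}$, the tight closure of $I^m$, which equals $I^m$ because $S$ is regular. For $f \in I^{(hm)}$ and a test element $c \in S^{\circ}$, the aim is to show $c f^q \in (I^m)^{[q]} = I^{mq}$ for every $q = p^e$. Localizing at each associated prime $\mathfrak{p}$ of $I$ (necessarily of height at most $h$), one has $f \in \mathfrak{p}^{hm} S_{\mathfrak{p}}$, and a Frobenius/pigeonhole argument at the prime $\mathfrak{p}$ boosts this to $f^q \in (\mathfrak{p}^{[q]})^{m} S_{\mathfrak{p}}$. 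Gluing across the finitely many minimal primes of $I$ using a common test element then produces the required global containment in the tight closure.

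The main obstacle in either approach is the same conceptual step, namely converting the factor $h$ appearing in the symbolic exponent into the ordinary exponent $m$: in characteristic zero this bookkeeping is packaged into subadditivity together with Skoda's theorem, while in characteristic $p$ it is carried out by a colon-capturing Frobenius argument at each associated prime. All remaining ingredients, including the reduction to the essentially-of-finite-type setting and the triviality of tight closure in a regular ring, are standard and should not cause difficulty.
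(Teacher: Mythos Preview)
The paper does not supply its own proof of this theorem; it is quoted as a known result with citation to \cite{ELS,comparison}. Your proposal is essentially a summary of the original arguments in those two papers: asymptotic multiplier ideals together with subadditivity and a Skoda-type containment in characteristic zero (Ein--Lazarsfeld--Smith), and a tight-closure argument in positive characteristic (Hochster--Huneke). As a sketch of those proofs it is broadly accurate.

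One point deserves care: your remark that ``the mixed-characteristic case then follows from the positive-characteristic case by reduction modulo $p$'' is too casual. The Hochster--Huneke argument as written applies to regular rings containing a field, and extending the containment to arbitrary regular rings in mixed characteristic is not a routine spreading-out exercise; it required substantial further work (ultimately via perfectoid techniques, cf.\ Ma--Schwede). For the present paper this is immaterial, since the only rings in play are polynomial rings $\kk[\PP^N_\kk]$ over a field, so the equicharacteristic result already suffices.
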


If $I$ is a defining ideal of points in $\PP^N_\CC$, then Theorem \ref{theorem: containment Ihm in Im} implies $\frac{\alpha \left(I ^{(m)} \right)}{m} \geqslant \frac{\alpha(I)}{N}, \text{ for all } m \geqslant 1,$ which is the bound proved by Waldschmidt and Skoda. To strengthen the containment, Harbourne-Huneke conjectured that for a homogeneous radical ideal $I \subset \kk[\PP^N_\kk]$ of big height $N$, one would expect that  $I^{(mN)} \subseteq \mm^{m(N-1)}I^m$ for all $m \geqslant 1$, where $\mm = \langle x_0, x_1, \dots, x_N \rangle $. Chudnovsky's conjecture follows from stable version of the containment, which has been studied in \cite{bghn2021chudnovskys}. 

\begin{conjecture}[Stable Harbourne-Huneke containment]
	Let $I \subseteq \kk[\PP^N_\kk]$ be a homogeneous radical ideal of big height $h$. Then there exists a constant $r(I) \geqslant 1$, depending on $I$, such that for all $r \geqslant r(I)$, we have
	$$(1) \quad I^{(hr)} \subseteq \mm^{r(h-1)}I^r \qquad \textrm{ and } \qquad (2) \quad I^{(hr-h+1)} \subseteq \mm^{(r-1)(h-1)}I^r.$$
\end{conjecture}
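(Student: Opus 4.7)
The plan is to reduce the stable Harbourne-Huneke containment to a sharpening of the Ein-Lazarsfeld-Smith/Hochster-Huneke estimate: the known inclusion $I^{(hr)} \subseteq I^r$ must be upgraded by an extra factor of $\mm^{r(h-1)}$. I would first record that (1) and (2) are closely related stable strengthenings of the same base containment, and that in many situations one can be deduced from the other up to a shift of the stability threshold $r(I)$; I will therefore concentrate on (1) for $r \gg 0$, treating (2) as a parallel refinement. A reduction to characteristic zero and algebraically closed residue field is standard via flat base change, after which the full machinery of multiplier ideals becomes available.

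The technical core is to analyze multiplier ideals of the symbolic filtration. Ein-Lazarsfeld-Smith and Hochster-Huneke give the chain $I^{(hr)} \subseteq \mathcal{J}(I^r) \subseteq I^r$, so my goal would be to show that $\mathcal{J}(I^r) \subseteq \mm^{r(h-1)} I^r$ for $r$ sufficiently large. One route is through Skoda's theorem on the asymptotic multiplier ideal: past the Skoda threshold one has $\mathcal{J}(I^r) = I \cdot \mathcal{J}(I^{r-1})$, so by induction it suffices to produce a base case in which the multiplier ideal picks up vanishing of order $h-1$ at the cone point per extra power of $I$, with corrections governed by the log discrepancies of the affine cone over $V(I) \subset \PP^N$. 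A more arithmetic alternative uses the Waldschmidt constant $\ahat(I) = \lim_{m \to \infty} \alpha(I^{(m)})/m$: a Chudnovsky-type lower bound $\ahat(I) \geq (\alpha(I) + h - 1)/h$ together with the ELS containment would force, for $r \gg 0$, every nonzero element of $I^{(hr)}$ to have initial degree essentially $hr \cdot \ahat(I) \geq r(h-1) + r\alpha(I)$, after which one promotes this degree estimate to the honest containment $I^{(hr)} \subseteq \mm^{r(h-1)} I^r$ via a generator-by-generator argument that combines the degree bound with $I^{(hr)} \subseteq I^r$.

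The principal obstacle is that the stable Harbourne-Huneke conjecture at this level of generality appears to rest on, or be asymptotically equivalent to, Chudnovsky's bound for arbitrary radical ideals of big height $h$, which is itself still open. The present paper resolves the case $I = I(\XX)$ for general points $\XX \subset \PP^N$ via a Cremona-reduction argument applied to the Waldschmidt constant; any attempt at the conjecture in its stated generality must either find an analogue of this reduction for ideals without a point-configuration structure, or strengthen the multiplier-ideal comparison above so that the $\mm^{r(h-1)}$ factor drops out without first proving a Chudnovsky-type inequality. A realistic intermediate target is the arithmetically Cohen-Macaulay case of codimension $h$, where the symbolic Rees algebra is better understood, the asymptotic regularity of $I^{(m)}$ is linear with a computable slope, and one can reasonably hope to establish the required multiplier-ideal inclusion by reduction to the fiber cone.
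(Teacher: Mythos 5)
The statement you were given is labeled \emph{Conjecture} in the paper: it is an open problem, and the paper does \emph{not} prove it in the stated generality (arbitrary homogeneous radical ideal of big height $h$). What the paper actually proves (Theorems \ref{theorem: Containment for general points} and \ref{theorem: Chudnovsky for small points}) is the special case where $I$ defines a set of \emph{general points} in $\PP^N$, so $h=N$. Your proposal correctly recognizes this state of affairs, and correctly identifies that the paper's route for points goes through the Waldschmidt constant and a Cremona-reduction process. So your reading of the situation is essentially right.

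There is, however, a concrete flaw in the ``arithmetic alternative'' you sketch. You claim that a Chudnovsky-type inequality $\ahat(I) \geqslant \frac{\alpha(I)+h-1}{h}$, combined with the resulting lower bound $\alpha\big(I^{(hr)}\big) \geqslant r(h-1)+r\alpha(I)$ and the inclusion $I^{(hr)}\subseteq I^r$, would produce $I^{(hr)}\subseteq \mm^{r(h-1)}I^r$ by a ``generator-by-generator argument.'' That step does not go through: an element $f\in I^{(hr)}\cap I^r$ of degree $\geqslant r\alpha(I)+r(h-1)$ need not lie in $\mm^{r(h-1)}I^r$, because $I^r$ can have minimal generators in degrees well above $r\alpha(I)$. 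The correct ingredient, which the paper uses via the cited Bocci--Harbourne lemma, is a degree bound against the \emph{regularity}: since $I^r$ is generated in degrees at most $r\reg(I)$, the inequality $\alpha\big(I^{(m)}\big)\geqslant r\reg(I)+j$ does yield $I^{(m)}\subseteq \mm^j I^r$. This is why the key inequality the paper proves for generic points (Theorem \ref{theorem: lowerboundsforallnumber}) is the strictly stronger $\ahat(I) > \frac{\reg(I)+N-1}{N}$, not Chudnovsky's bound with $\alpha(I)$. Since $\alpha(I)\leqslant\reg(I)$, Chudnovsky's inequality alone is insufficient even in the points case; the strictness and the presence of $\reg$ are both needed to get the eventual containment and the stability threshold $r(I)$. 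Replacing $\alpha$ by $\reg$ and the weak inequality by a strict one in your second route would align it with the paper's actual argument, which of course still resolves only the special case of point configurations and leaves the full conjecture open, as you note.
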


Previously, the stable Harbourne-Huneke containment  $I^{(hr)} \subseteq \mm^{r(h-1)}I^r $, and hence, Chudnovsky's conjecture had been shown in the following cases: any set of points in $\PP^2_\kk$ \cite{HaHu}, a \emph{general} set of points in $\PP^3_\kk$ \cite{dumnicki2012symbolic, Dumnicki2015}, a set of at most $N+1$ points in \emph{generic position} in $\PP^N_\kk$ \cite{Dumnicki2015}, a set of points forming a \emph{star configuration} \cite{BoH, GHM2013}. In addition, Chudnovsky's conjecture is known for a set of points in $\PP^N_\kk$ lying on a quadric \cite{FMX2018}, and a \emph{very general} set of points in $\PP^N_\kk$ \cite{DTG2017, FMX2018}. By saying that a property $\mathcal{P}$ holds for a \emph{very general} set of points in $\PP^N_\kk$, we mean that there exist infinitely many open dense subsets $U_m$, $m \in \NN$, of the Hilbert scheme of $s$ points in $\PP^N_\kk$ such that the property $\mathcal{P}$ holds for all $\XX \in \bigcap_{m=1}^\infty U_m$. If we remove this infinite intersection of open dense subsets and show that there exists one open dense subset $U$ of the Hilbert scheme of $s$ points in $\PP^N_\kk$ such that the property $\mathcal{P}$ holds for all $\XX \in U$, then the property $\mathcal{P}$ holds for a \emph{general} sets of points. Informally, while very general properties correspond to (intersection of) countable open conditions, general properties correspond to one open condition. \par
\vspace{0.5em}

The stable Harbourne-Huneke containment and Chudnovsky's conjecture was shown to hold for at least $3^N$ many general points when $N\geqslant 4$, and the number of points in the results can be reduced to at least $2^N, \text{when } N \geqslant 9$ in \cite{bghn2021chudnovskys}. The key idea in the proof is that a stronger containment, namely, $I^{(hr-h)} \subseteq \mm^{r(h-1)}I^r, r \gg 0$, would imply Harbourne-Huneke stable containment. In \cite{bghn2021chudnovskys}, this stronger containment has been proved for a \emph{sufficiently large} number of \emph{generic} points, utilizing the important inequality $\ahat(I) > \frac{\reg(I)+N-1}{N}$, where $\ahat(I)$ is the \emph{Waldschmidt constant}, defined by $\ahat(I) := \lim_{m \rightarrow \infty} \frac{\alpha(I^{(m)})}{m}.$ This required an appropriate lower bound for $\ahat(I)$, but unfortunately, the method in \cite{bghn2021chudnovskys} could only provide such bounds for sufficiently large (exponential) numbers of generic points, but not for smaller numbers of points.\par
\vspace{0.5em}

In this manuscript, we use \emph{Cremoma transformation} to provide a reduction process to get desired lower bounds for $\ahat(I)$ of the defining ideals of generic points. Our strategy, inspired from the works \cite{DumnickiAlgorithm, dumnicki2012symbolic, Dumnicki2015}, is to reduce the study of lower bounds for Waldschmidt constants of defining ideals of generic points to that of a fewer number of generic points. More precisely, we use Cremona transformation as our primary tool to show the following.

\begin{introthm*}[Theorem \ref{theorem: reduction on Waldschmidt consant} and Proposition \ref{proposition: 2^N reduces to 2}]
If  $\ahat(s) = \ahat(I (1^{\times s}))$, and $I \big(1^{\times {b \cdot 2^N}}, \overline{m}\big)$ denotes the defining  ideal of  $b \cdot2^N+s$ generic points, where $b \cdot 2^N$ have multiplicity 1 and the remaining $s$ points have multiplicities $m_1, \dots, m_s$ respectively, then 
$$(1) \quad \ahat\big(b \cdot(2^N)^k\big) \geqslant 2^k\ahat (b) \qquad \textrm{ and } \qquad (2) \qquad \ahat\big( I \big(1^{\times b\cdot 2^N}, \overline{m}\big) \big) \geqslant \ahat\big( I \big(2^{\times b}, \overline{m} \big)\big). $$
\end{introthm*}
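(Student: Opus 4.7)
Part (1) follows by a straightforward induction on $k$ from (2). The key observation is that $I(2^{\times T})^{(r)} = I(1^{\times T})^{(2r)}$ for any point configuration $T$, which gives $\ahat(I(2^{\times T})) = 2\,\ahat(I(1^{\times T}))$. Applying (2) with empty $\overline{m}$ to $b \cdot (2^N)^k$ simple points, we obtain $\ahat\big(b \cdot (2^N)^{k+1}\big) \geqslant \ahat\big(I(2^{\times b \cdot (2^N)^k})\big) = 2\,\ahat\big(b \cdot (2^N)^k\big)$, and an easy induction yields (1).

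For (2), my plan is to prove the stronger claim $\alpha(I(1^{\times b \cdot 2^N}, \overline{m})^{(r)}) \geqslant \alpha(I(2^{\times b}, \overline{m})^{(r)})$ (at least asymptotically in $r$), from which the Waldschmidt inequality follows by dividing by $r$ and passing to the limit. The principal tool is the standard Cremona transformation $\phi \colon \PP^N \dashrightarrow \PP^N$ based at $N+1$ generic points, which sends a form $F$ of degree $d$ with multiplicities $m_0, \ldots, m_N$ at the base points to the strict transform, a form of degree $d' = Nd - (N-1)\sum_{i=0}^N m_i$ with new multiplicities $m_i' = d - \sum_{j \neq i} m_j$ at the base points (and unchanged multiplicities at the remaining points). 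Given a form $F$ of minimal degree $d$ in $I(1^{\times b \cdot 2^N}, \overline{m})^{(r)}$, I would group the $b \cdot 2^N$ simple points into $b$ clusters of $2^N$ each, and for each cluster iteratively apply Cremonas based at $(N+1)$-subsets of its points (possibly also involving some of the $s$ extra points). The non-negativity requirement $m_i' \geqslant 0$ at each stage produces a linear constraint on the current degree, and these constraints combine to yield a cumulative lower bound on $d$ matching $\alpha(I(2^{\times b}, \overline{m})^{(r)})$.

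The main obstacle is the combinatorial orchestration of these iterated Cremonas. Because each standard Cremona requires exactly $N+1$ base points while each cluster contains $2^N > N+1$ points for $N \geqslant 2$, the Cremonas within a cluster must be scheduled so that: (i) the base points remain in sufficiently generic position at every step, which can be arranged via the initial genericity together with an upper-semicontinuity argument for the Waldschmidt constant of generic points (allowing us to specialize the $b \cdot 2^N$ simple points near $b$ chosen points $Q_1, \ldots, Q_b$ without decreasing $\ahat$); (ii) the intermediate multiplicities stay non-negative throughout the iteration; and (iii) the accumulated degree bound matches $\ahat(I(2^{\times b}, \overline{m}))$ exactly in the limit. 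When some of the extra multiplicities $m_i$ are large, we would incorporate the corresponding extra points into selected Cremona base sets to sharpen constraints, paying an additional bookkeeping cost. Arranging this scheduling coherently for arbitrary $b$ and $\overline{m}$, and verifying that the resulting degree inequality is tight enough to yield the Waldschmidt bound, is the crux of the argument.
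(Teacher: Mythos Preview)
Your derivation of (1) from (2) is fine and essentially the same as the paper's induction (the paper proves $\ahat(2^Nb)\geqslant 2\ahat(b)$ first and then inducts, but this is equivalent to your observation that $\ahat(I(2^{\times T}))=2\,\ahat(T)$ combined with (2) for empty $\overline{m}$).

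The problem is your argument for (2). You propose to attack the $2^N$-point clusters directly with iterated standard Cremonas, and you correctly identify the combinatorial scheduling as ``the crux of the argument'' --- but you do not carry it out, and there is no reason to expect it to go through cleanly. A standard Cremona uses exactly $N+1$ base points, and the drop in degree is $k=(N-1)d-\sum m_i$; to extract a useful constraint from a cluster of $2^N$ equal-multiplicity points you would have to organise a long sequence of Cremonas whose cumulative effect matches the (quite delicate) emptiness $I(m^{\times 2^N})_{2m-1}=0$. That emptiness is a theorem of Evain, and it is \emph{not} proved by elementary Cremona bookkeeping. Your sketch thus hides a gap precisely at the hard step.

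The paper's proof of (2) is much shorter and avoids this difficulty entirely: it takes $I(m^{\times 2^N})_{2m-1}=0$ as a known input (Lemma~\ref{lemma: known inequalities of Waldschmidt constant}(3), i.e.\ $\ahat(2^N)=2$) and then applies Dumnicki's ``splitting'' lemma (Lemma~\ref{lemma: adding multiplicities Dumnicki}): if $I(m_1,\dots,m_r)_k=0$ and $I(m_1',\dots,m_s',k+1)_t=0$, then $I(m_1,\dots,m_r,m_1',\dots,m_s')_t=0$. One application replaces a single point of multiplicity $2m$ in $I(2,\overline{m})^{(m)}$ by $2^N$ points of multiplicity $m$ without changing emptiness at degree $t$; repeating $b$ times gives $\alpha\big(I(1^{\times b\cdot 2^N},\overline{m})^{(m)}\big)\geqslant \alpha\big(I(2^{\times b},\overline{m})^{(m)}\big)$ for every $m$, and dividing by $m$ yields (2). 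No Cremona is used in this step; the Cremona machinery in the paper serves other purposes (Lemma~\ref{lemma: reduction of multiplicity of points} and the small-point bounds).
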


As a result of this reduction process combined with a similar approach using \emph{specialization} as in \cite{bghn2021chudnovskys}, see also \cite{bghn2022demailly}, yields the results on the stable Harbourne-Huneke containment and Chudnovsky's conjecture for a small number of general points. Combining this and previous results on sufficiently many general points, we are able to complete the picture for all numbers of general points. One key point of the proof is the appropriate lower bound on Waldschmidt constant of generic points.

\begin{introthm*}[Theorem \ref{theorem: lowerboundsforallnumber}]
Let $I$ be the defining ideal of any number of $s$ generic points in $\PP^N$ where $s\geqslant N+4$. Then $$\ahat(I) > \frac{\reg(I)+N-1}{N}.$$ 
\end{introthm*}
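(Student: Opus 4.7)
My plan is to combine the Cremona reduction process developed earlier in the paper with the previously known cases for large $s$ and with explicit bounds on the regularity of generic points. First I would recall from \cite{bghn2021chudnovskys} that the inequality $\ahat(I) > (\reg(I)+N-1)/N$ is already established once $s$ is sufficiently large, with thresholds of roughly $s \geqslant 2^N$ for $N \geqslant 9$ and $s \geqslant 3^N$ for $N \geqslant 4$. Thus only the finite window where $N+4 \leqslant s$ is below the corresponding threshold needs to be addressed directly, and the argument becomes a case analysis that shrinks with $N$.

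For the regularity side, since the points are generic the Hilbert function of $R/I$ is maximal, and a standard argument yields an explicit formula (or sharp bound) for $\reg(I)$ in terms of the smallest integer $\sigma$ with $\binom{\sigma+N}{N} \geqslant s$; I would tabulate these values for the relevant range of $s$. For the Waldschmidt constant, I would invoke the first introductory theorem: writing $s = b \cdot 2^N + t$ with $0 \leqslant t < 2^N$, part~(2) gives
$$\ahat\bigl(I(1^{\times b\cdot 2^N},1^{\times t})\bigr) \;\geqslant\; \ahat\bigl(I(2^{\times b},1^{\times t})\bigr),$$
so that $\ahat(I)$ is bounded below by the Waldschmidt constant of a fat point ideal with far fewer underlying points. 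Iterating, and using part~(1) for powers of $2^N$ when convenient, brings every case down to a small base configuration whose $\ahat$ is either computed directly or covered by results already known for small numbers of (fat) points.

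The main obstacle I anticipate is the careful bookkeeping at the bottom of the range, for $s$ just above $N+4$: there the reduction can land on small configurations with mixed multiplicities, where establishing a sharp lower bound on $\ahat$ requires either an ad hoc Bezout or blow-up argument or comparison with a simplex configuration of $N+1$ points. For each such $s$ I would perform the explicit reduction and verify the inequality against the recorded value of $\reg(I)$. Concatenating these case-by-case verifications with the known large-$s$ cases from \cite{bghn2021chudnovskys} then yields the inequality for all $s \geqslant N+4$, completing the proof.
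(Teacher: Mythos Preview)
Your plan has a genuine gap. The window you describe does \emph{not} shrink with $N$: for each fixed $N$ the range $N+4 \leqslant s < 2^N$ (or $< 3^N$ when $4\leqslant N\leqslant 8$) that is not covered by \cite{bghn2021chudnovskys} grows exponentially in $N$, and the theorem must hold for \emph{all} $N\geqslant 4$. Moreover, the Cremona reduction you invoke, writing $s = b\cdot 2^N + t$, gives nothing when $s<2^N$ since then $b=0$; so precisely in the range where you need help, the reduction
$\ahat\bigl(I(1^{\times b\cdot 2^N},1^{\times t})\bigr)\geqslant \ahat\bigl(I(2^{\times b},1^{\times t})\bigr)$
is vacuous. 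Thus your case analysis would have to be carried out, for each $N$, over an exponentially growing list of $s$-values, and there is no mechanism in your proposal that makes this uniform in $N$.

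The paper closes this gap by \emph{induction on $N$}, not on $s$. After handling $N=4$ directly and reducing (via Lemma~\ref{lemma: combinatorialineq}) to the range $N+4\leqslant s\leqslant\binom{2N-1}{N}$, it deals with the bottom two strata $d=N+1,N+2$ by explicit Cremona arguments (Lemmas~\ref{lemma: lowers bounds on Waldschmidt in PN for N+4<s< N+2chooseN} and \ref{lemma: lowers bounds on Waldschmidt in PN for N+2chooseN<s< N+3chooseN}), and then for the remaining strata $3\leqslant \ell\leqslant N-1$ it invokes the Waldschmidt decomposition result of Dumnicki--Szemberg--Szpond--Tutaj-Gasi\'nska (Lemma~\ref{lemma: Waldschmidtdecomp}). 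That lemma, applied in Lemma~\ref{lemma: inductionlemma}, manufactures the bound $\ahat(\PP^{N+1},s_\ell)>\frac{N+\ell+2}{N+1}$ from the inductive hypothesis $\ahat(\PP^N,r)\geqslant\frac{N+\ell+1}{N}$ in one lower dimension. This dimensional lifting is the missing idea in your outline; without it (or some substitute that is uniform in $N$), the argument cannot be completed for arbitrary $N$.
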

Note that the Waldschmidt constant for defining ideals of up to $N+3$ generic points are computed in \cite{brianlinear} and Harbourne-Huneke Containment as well as Chudnovsky's Conjecture would follow easily, see also \cite{NagelTrokInterpolation}. Hence, we are interested in ideals defining at least $N+4$ generic points when $N\geqslant 4$. The main result of this paper is the affirmed answer to the stable Harbourne-Huneke Containment and Chudnovsky's Conjecture for any number of general points in any dimensional projective spaces.
\begin{introthm*}[Theorem \ref{theorem: Containment for general points} and Theorem \ref{theorem: Chudnovsky for small points}]
Then ideal defining a set of any number of $s$ general points in $\PP^N$ satisfies the stable Harbourne-Huneke Containment, and hence, satisfies Chudnovsky's Conjecture.
Furthermore, there is a constant $r(s,N)$ depends only on $s$ and $N$ such that the containment $I^{(Nr)} \subseteq \mm^{(N-1)r}I^r$ hold when $I$ is the defining ideal of $s$ general points and $r \geqslant r(s,N)$.
\end{introthm*}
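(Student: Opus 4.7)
The plan is to reduce the entire theorem to a single strict Waldschmidt-constant inequality, namely $\ahat(I) > \tfrac{\reg(I)+N-1}{N}$, for the defining ideal of $s$ generic points, and then transfer the conclusion to general points by specialization. The ranges $s \leqslant N+3$ and $N = 2, 3$ are already settled (by the explicit Waldschmidt computations of \cite{brianlinear} together with \cite{NagelTrokInterpolation}, and by \cite{HaHu, dumnicki2012symbolic, Dumnicki2015} respectively), so I restrict to $s \geqslant N+4$ and $N \geqslant 4$. Within that range I split further according to the residue of $s$ modulo $2^N$, so that the Cremona reduction can be applied cleanly in each class.

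The first step is to establish the Waldschmidt bound of Theorem \ref{theorem: lowerboundsforallnumber} in the \emph{generic} setting. Here I invoke the Cremona-reduction results of Theorem \ref{theorem: reduction on Waldschmidt consant} and Proposition \ref{proposition: 2^N reduces to 2}: iterating $\ahat(b \cdot (2^N)^k) \geqslant 2^k \ahat(b)$ produces an exponential boost to the Waldschmidt constant as the number of points grows, while the mixed-multiplicity inequality $\ahat(I(1^{\times b \cdot 2^N}, \overline{m})) \geqslant \ahat(I(2^{\times b}, \overline{m}))$ converts many simple points into a few fat points. Iterating both moves reduces every residue class to a small base configuration whose Waldschmidt constant is either computed in \cite{brianlinear} or bounded in \cite{bghn2021chudnovskys}; the exponential $2^k$ gain is precisely what makes the comparison with $\reg(I)$ strict and closes the gap left open in \cite{bghn2021chudnovskys}.

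Once the generic inequality is in hand, I transport it to general points by specialization, following \cite{bghn2021chudnovskys, bghn2022demailly}: initial degrees, regularities, and the relevant containments behave upper/lower-semicontinuously under flat deformation of the point configuration in the Hilbert scheme, so a strict inequality at the generic point persists on a non-empty Zariski-open locus of general configurations. With $\ahat(I) > \tfrac{\reg(I)+N-1}{N}$ in hand for general points, I then appeal to the implication isolated in \cite{bghn2021chudnovskys}: this strict inequality forces the stronger containment $I^{(Nr-N)} \subseteq \mm^{(N-1)r} I^r$ for all $r \gg 0$, which yields both assertions of the stable Harbourne--Huneke containment, and Chudnovsky's conjecture follows by comparing initial degrees on the two sides of $I^{(Nr)} \subseteq \mm^{(N-1)r} I^r$. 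The constant $r(s,N)$ is extracted from the effective convergence rate of $\alpha(I^{(m)})/m$ to $\ahat(I)$ and thus depends only on $\alpha(I)$, $\reg(I)$, and the size of the Waldschmidt gap, hence only on $s$ and $N$.

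The main obstacle I anticipate is twofold. In the Cremona step one must verify that each residue-class configuration genuinely satisfies the numerical hypotheses allowing the transformation to decrease the relevant invariants, and one must track $\reg(I)$ carefully -- not just $\alpha(I)$ -- so that the strict comparison to $(\reg(I)+N-1)/N$ survives the reduction. In the specialization step, obtaining \emph{general} rather than merely very general points requires showing that the failure locus is cut out by \emph{finitely many} closed conditions; fortunately the single open condition we actually need is the strict Waldschmidt inequality, and the semicontinuity of $\ahat$ and $\reg$ packages this cleanly into one open condition on the Hilbert scheme.
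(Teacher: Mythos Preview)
Your overall architecture matches the paper's: establish the strict inequality $\ahat(I) > \tfrac{\reg(I)+N-1}{N}$ for \emph{generic} points (Theorem \ref{theorem: lowerboundsforallnumber}), deduce the stronger containment $I^{(Nr-N)} \subseteq \mm^{r(N-1)}I^r$ for $r\gg 0$, and then specialize. However, there are two genuine gaps.

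\textbf{The Cremona reduction does not cover the small range.} Both moves you cite, $\ahat(b\cdot(2^N)^k)\geqslant 2^k\ahat(b)$ and $\ahat(I(1^{\times b\cdot 2^N},\overline m))\geqslant \ahat(I(2^{\times b},\overline m))$, require at least $2^N$ simple points to operate on. For $N+4\leqslant s<2^N$ neither reduction applies, the base cases of \cite{brianlinear} stop at $N+3$, and \cite{bghn2021chudnovskys} starts only at $2^N$ (or $3^N$). This is exactly the range the present paper was written to close, and it does \emph{not} do so by Cremona alone. The paper handles it by (i) ad hoc Cremona computations for a handful of small-$N$ cases (Lemmas \ref{lemma: lower bounds on Waldschmidt constant in P^4}--\ref{lemma: lower bounds on Waldschmidt constant in P^N}, \ref{lemma: lowers bounds on Waldschmidt in P4 for 8<s< 81 }--\ref{lemma: lowers bounds on Waldschmidt in PN for N+2chooseN<s< N+3chooseN}), and then (ii) an \emph{induction on $N$} using the Waldschmidt decomposition of \cite{localeffectivity} (Lemmas \ref{lemma: Waldschmidtdecomp} and \ref{lemma: inductionlemma}) to propagate the bound from $\PP^N$ to $\PP^{N+1}$ across the intermediate range $\binom{N+4}{N+1}<s\leqslant\binom{2N}{N+1}$. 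This inductive step via Waldschmidt decomposition is the essential new idea, and your plan omits it entirely.

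\textbf{The specialization step is more delicate than stated.} You assert that semicontinuity of $\ahat$ gives a single open condition, but $\ahat(I)>c$ is an \emph{infinite} intersection of conditions $\alpha(I^{(m)})>cm$, which a priori yields only \emph{very general} points. The paper avoids this by never specializing $\ahat$: it uses the generic Waldschmidt inequality to find one fixed $c$ with $I(\z)^{(Nc-N)}\subseteq \mm_\z^{c(N-1)}I(\z)^c$, specializes this \emph{single} containment (one genuinely open condition via \cite{Krull1948}), and then bootstraps to all $r\gg 0$ using \cite[Corollary 3.2]{bghn2021chudnovskys}. That bootstrap, not semicontinuity of $\ahat$, is what makes $r(s,N)$ depend only on $s$ and $N$.
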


The paper is outlined as follows. Section \ref{sec.prel} introduces necessary terminology and notations and recalls some valuable results. In Section \ref{sec.reduction}, we establish Theorems regarding Cremona transformation and obtain lower bounds on the Waldschmidt constant of ideals defining small numbers of generic fat points. In Section \ref{section: lowerboundWaldschmidt}, we establish the important lower bound for the Waldschmidt constant of generic points. In Section \ref{section: Chudnovsky}, we prove the stable Harbourne-Huneke containment and Chudnovsky's conjecture for any numbers of general points. 

\begin{acknowledgement}
The first author is thankful to Adam Van Tuyl for asking him questions regarding Chudnovsky's Conjecture when $N=4$ during his talk at the Canadian Mathematical Society Winter Meeting on December '21, which led to this manuscript. Both authors are thankful to Marcin Dumnicki, Huy T\`ai  H\`a, Paolo Mantero, and Alexandra Seceleanu for valuable suggestions. The first author was partially funded by the Faculty of Science and Department of Mathematics at the University of Manitoba.
\end{acknowledgement}

\section{Preliminaries} \label{sec.prel}
 We introduce basic notations and known results that we will be using throughout the paper. We will work with the assumption that $N\geqslant 4$ as both the stable \emph{Harbourne-Huneke containment}, and \emph{Chudnovsky's conjecture} for any sets of general points are known for $N=2$ (see \cite{HaHu}) and $N=3$ (see \cite{dumnicki2012symbolic, Dumnicki2015}). We also use the umbrella assumption that $\kk$ is any algebraically closed field. $S= \kk[\PP^N_{\kk}]$ represents the homogeneous coordinate ring of the projective space $\PP^N_\kk$. Our work focus on symbolic powers, the Waldschmidt constant, and Cremona transformations, so we define them individually.
 \begin{definition}
 	Let $R$ be a commutative ring and let $I \subseteq R$ be an ideal. For $m \in \NN$, the \emph{$m$-th symbolic power} of $I$ is defined to be
 	$$I^{(m)} = \bigcap_{\pp \in \Ass(I)} \left(I^mR_\pp \cap R\right).$$
 \end{definition}
 
We remark here that there is also a notion of symbolic powers in which the set $\Min(I)$ of minimal primes is used in place of the set $\Ass(I)$ of associated primes in the definition. In the context of this paper, for defining ideals of points, or, more generally, ideals with no embedded primes, these two notions of symbolic powers agree. It is well-known that if $\XX$ is the set $ \{P_1, \dots, P_s\} \subseteq \PP^N_\kk$ of $s$ many distinct points and let $\p_i \subseteq \kk[\PP^N_\kk]$ be the defining ideal of $P_i$ and  $I = \p_1 \cap \dots \cap \p_s$ is the ideal defining $\XX$. Then the $m$-th symbolic power is given by, 
	$$I^{(m)} = \p_1^m \cap \dots \cap \p_s^m.$$
 
 \begin{definition}
 	If  $I \subseteq \kk[\PP^N_\kk]$ is homogeneous ideal and $\alpha(I)$ denotes its least generating degree, then the \emph{Waldschmidt constant} of $I$ is defined as 
 	$$\ahat(I) := \lim_{m \rightarrow \infty} \dfrac{\alpha(I^{(m)})}{m} = \inf_{m \in \NN} \dfrac{\alpha(I^{(m)})}{m}.$$ 
 	See, for example, \cite[Lemma 2.3.1]{BoH}. 
 \end{definition}
 Using the Waldschmidt constant of defining ideal of set of points in $\PP^N_\kk$, Chudnovsky's conjecture takes the following format.
 \begin{conjecture}[Chudnovsky]
 	Let $I \subseteq \kk[\PP^N_\kk]$ be the defining ideal of a set of (reduced) points in $\PP^N_\kk$. Then,
 	$$\ahat(I) \geqslant \dfrac{\alpha(I)+N-1}{N}.$$
 \end{conjecture}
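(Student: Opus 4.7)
The plan is to split the argument into two strata of configurations, handled by different tools.

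\textbf{Generic stratum via degeneration.} The main theorems of this manuscript establish Chudnovsky's inequality for any number of general points in $\PP^N_\kk$. To extend this to an arbitrary reduced configuration $\XX = \{P_1, \ldots, P_s\} \subseteq \PP^N_\kk$ with defining ideal $I = I_\XX$, I would realize $\XX$ as the special fiber of a flat family $\{\XX_t\}_{t \in \AA^1}$ of $s$ reduced points whose generic fiber $\XX_{\mathrm{gen}}$ is general. For each fixed $m$, the function $t \mapsto \alpha\bigl(I_{\XX_t}^{(m)}\bigr)$ is upper semi-continuous on $\AA^1$, so $\alpha\bigl(I_{\XX_{\mathrm{gen}}}^{(m)}\bigr) \leqslant \alpha\bigl(I^{(m)}\bigr)$. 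Dividing by $m$ and taking the infimum yields
$$\ahat(I) \;\geqslant\; \ahat(I_{\XX_{\mathrm{gen}}}) \;\geqslant\; \frac{\alpha(I_{\XX_{\mathrm{gen}}}) + N - 1}{N},$$
where the last inequality is the general-case theorem of this paper. When $\alpha(I) = \alpha(I_{\XX_{\mathrm{gen}}})$, this already gives Chudnovsky for $\XX$.

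\textbf{Special stratum via unexpected hypersurfaces.} The semi-continuity argument becomes insufficient when $\alpha(I) > \alpha(I_{\XX_{\mathrm{gen}}})$, i.e.\ when $\XX$ lies on an unexpected hypersurface of degree strictly smaller than the generic initial degree. Here I would attempt a B\'ezout-type recursion using a defining unexpected form $F$ of degree $\alpha(I)$. Given $D \in I^{(m)}$ of minimal degree, either $F \mid D$, in which case $D/F$ belongs to the symbolic power of a residual scheme and one inducts on $m$ or on $\deg D$; or $F \nmid D$, and B\'ezout's theorem applied to $D \cap F$ combined with the multiplicity-$m$ vanishing condition along $\XX$ produces a quantitative lower bound on $\deg D$ scaling with $\alpha(I)$ rather than with the smaller generic value. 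The goal is to bootstrap this to $\ahat(I) \geqslant \frac{\alpha(I) + N - 1}{N}$ uniformly.

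\textbf{Main obstacle.} The hard part is making the B\'ezout recursion uniform across all unexpected configurations: the residual scheme produced in the $F \mid D$ case is in general non-reduced, and its symbolic powers must be controlled inductively without squandering the degree gained from $F$. Unexpected hypersurfaces are themselves an active area of research with no unified classification, so an argument handling them all simultaneously would be a substantial advance. This is precisely why Chudnovsky's conjecture remains open for arbitrary reduced point configurations beyond the classical cases (points in $\PP^2$, points on a quadric, star configurations) and the generic cases established here. My proposal therefore gives Chudnovsky unconditionally on the open stratum of the Hilbert scheme where $\alpha$ attains its generic value, and reduces the remaining special stratum to a sharpened analysis of the unexpected hypersurfaces containing $\XX$.
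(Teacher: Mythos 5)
The statement you are addressing is Chudnovsky's conjecture for an \emph{arbitrary} reduced configuration of points, which the paper does not prove; the paper proves it only for a \emph{general} set of points (Theorem \ref{theorem: Chudnovsky for small points}), deduced from the stable Harbourne--Huneke containment (Theorem \ref{theorem: Containment for general points}) rather than from any degeneration of special configurations to general ones.

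Your degeneration step is the crux of the problem, and it goes in the wrong direction. In a flat family $\{\XX_t\}$ of fat point schemes, the function $t \mapsto \dim\bigl[I_{\XX_t}^{(m)}\bigr]_d$ is upper semi-continuous, so the initial degree $\alpha\bigl(I_{\XX_t}^{(m)}\bigr)$ is \emph{lower} semi-continuous: it attains its maximum on the generic fibre and can only \emph{drop} at special ones. (This is precisely the ``unexpected hypersurface'' phenomenon: special configurations acquire extra low-degree forms, they do not lose high-degree ones.) Consequently $\alpha\bigl(I^{(m)}\bigr) \leqslant \alpha\bigl(I_{\mathrm{gen}}^{(m)}\bigr)$ for every $m$, hence $\ahat(I) \leqslant \ahat(I_{\mathrm{gen}})$, not $\geqslant$. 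Your chain $\ahat(I) \geqslant \ahat(I_{\mathrm{gen}}) \geqslant \frac{\alpha(I_{\mathrm{gen}})+N-1}{N}$ therefore already fails at the first link, on \emph{every} stratum, not just the one you set aside. (You also label the exceptional stratum by $\alpha(I) > \alpha(I_{\mathrm{gen}})$ while describing it as one where $\XX$ lies on unexpected forms of degree strictly \emph{smaller} than generic --- that is the condition $\alpha(I) < \alpha(I_{\mathrm{gen}})$; the confusion about the direction of the inequality seems to have propagated into the semi-continuity claim.) The paper sidesteps all of this by restricting to general points, where specialization is controlled by Krull's results cited in Remark \ref{rmk.KrullSpecialization}, so that symbolic powers of the \emph{generic} ideal in $\kk(\z)[\x]$ specialize exactly to those of a general configuration in $\kk[\x]$; the B\'ezout-type recursion you sketch is the idea behind the classical cases such as $\PP^2$ and star configurations, but making it uniform across all unexpected hypersurfaces is, as you say, exactly the open problem, and the paper makes no claim there.
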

 
 \begin{definition}
 	Let $\p_i $ denotes the ideal defining a point $P_i \in \XX=\{P_1, \dots P_s \} \subset \PP^N_\kk$ and $\overline{m}=(m_1, \dots m_s)$ is a sequence of positive integers. Then the fat point scheme denoted by $m_1P_1+m_2P_2+ \dots + m_sP_s$ is the scheme defined by the ideal 
 	$$I (\overline{m})= I(m_1, \dots m_s) = \p_1^{m_1} \cap \p_2^{m_2} \cap \dots \cap \p_s^{m_s}. $$
 	If $m_j \leqslant 0$, then we take $\p_j^{m_j}=\kk[\PP^N_\kk]$. 
 	We will also use the following notation: $$m^{\times s}=\underbrace{(m,m, \dots, m)}_{ s \text{ times }}.$$
 \end{definition}

Let $\XX=\{P_1, \dots, P_s \} \subset \PP^N_\kk$ be a set of points. Then $\L_N(d; m_1, \dots, m_s)$ denotes the linear system of hypersurfaces of degree $d$ passing though the $s$ points $P_1, \dots, P_s$ with multiplicity $m_1, m_2, \dots m_s$, respectively. In our context, $\L_N(d; m_1, \dots, m_s) = [I(m_1, \dots m_s)]_d$, the degree $d$-component of the defining ideal.

 \begin{definition}
 	The standard birational transformation 
 	$$\Phi: \PP^N_\kk \to \PP^N_\kk, \text{ defined by } \Phi(x_0: \dots: x_N) \mapsto (x_0^{-1}: \dots: x_N^{-1}),$$
 	is known as Cremona transformation. 
 \end{definition}
 The following Lemma is due to \cite[Theorem 3]{DumnickiAlgorithm}, see also, \cite[Lemma B.1.2]{brianlinear}, which infers how Cremona operations do not alter the linear system up to a certain degree of adjustment. The Lemmas were originally shown for points in general position, but the proof applies for generic points or general points as well. We restate the theorems in our context of defining ideals. 
 \begin{lemma}\label{lemma: cremoa from brian}
 	For $N\geqslant 2$, the Cremona transformation $(x_0: \dots x_N) \mapsto (\dfrac{1}{x_0}: \dots: \dfrac{1}{x_N} )$ of $\PP^N$ induces a linear isomorphism 
 	$$ [I(m_1, \dots m_s)]_d \longrightarrow [I(m_1+k, \dots,m_{N+1}+k,m_{N+2}, 
 	\dots,  m_s)]_{d+k} $$
 	provided that $m_i+k \geqslant 0$, for $i=1,\ldots , N+1$, where $k=(N-1)d-\sum_{j=1}^{N+1}m_j$.
 \end{lemma}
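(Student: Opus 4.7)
The plan is to work explicitly with the Cremona pullback on forms, monomial by monomial, and then factor out a common monomial to produce a form of the desired degree $d+k$. Since the points are in general position, after a projective change of coordinates I may assume $P_i$ is the coordinate vertex $e_{i-1}$ for $i=1,\ldots,N+1$, so that $\Phi$ becomes the standard Cremona with $\Phi^*(x_i) = \prod_{j \neq i} x_j$. A monomial $\x^{\a}$ of degree $d$ therefore satisfies $\Phi^*(\x^{\a}) = \x^{d\1 - \a}$, a monomial of total degree $Nd$.

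Given $F \in [I(m_1, \ldots, m_s)]_d$, the multiplicity condition at $P_i = e_{i-1}$ translates to $a_{i-1} \leqslant d - m_i$ for every monomial $\x^{\a}$ of $F$. Equivalently, the exponent $d - a_{i-1}$ of $x_{i-1}$ in each term of $\Phi^*(F)$ is at least $m_i$, so $\Phi^*(F)$ is divisible by $x_0^{m_1} x_1^{m_2} \cdots x_N^{m_{N+1}}$. I then set
$$G \;:=\; \frac{\Phi^*(F)}{x_0^{m_1} x_1^{m_2} \cdots x_N^{m_{N+1}}},$$
a homogeneous form of degree $Nd - \sum_{j=1}^{N+1} m_j = d + k$.

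The heart of the argument is the multiplicity of $G$ at each $P_i$. A monomial $\x^{d\1 - \a}$ of $\Phi^*(F)$ has order $(N-1)d + a_{i-1}$ at $e_{i-1}$ (in the chart $x_{i-1}=1$), and dividing by the distinguished monomial subtracts $\sum_{j \neq i} m_j$, leaving order $(N-1)d + a_{i-1} - \sum_{j \neq i} m_j$. To bound $a_{i-1}$ from below I use $a_{i-1} = d - \sum_{j \neq i} a_{j-1}$ together with the upper bounds $a_{j-1} \leqslant d - m_j$ at the other coordinate points, which gives $a_{i-1} \geqslant \sum_{j \neq i} m_j - (N-1)d$. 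Combining these yields that the multiplicity of $G$ at $P_i$ is at least $(N-1)d - \sum_{j \neq i} m_j = m_i + k$, which is nonnegative precisely by the hypothesis. For $j \geqslant N+2$, the generic position of $P_j$ places it off the indeterminacy locus of $\Phi$, which acts as a local biregular map there, so the multiplicity $m_j$ is preserved (after relabelling the configuration within its generic equivalence class).

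To upgrade the containment $F \mapsto G$ to a linear isomorphism I exploit the involutive nature of $\Phi$: applying the same recipe to $G$ with multiplicities $(m_1+k, \ldots, m_{N+1}+k, m_{N+2}, \ldots, m_s)$ and degree $d+k$ produces a shift $k' = (N-1)(d+k) - \sum_{j=1}^{N+1}(m_j+k) = -k$, returning a form of degree $d$ with the original multiplicities; a direct check that the two assignments are mutually inverse completes the argument. The technically delicate ingredient is the lower bound on $a_{i-1}$ obtained from the degree identity together with the competing upper bounds, which is exactly where the hypothesis $m_i + k \geqslant 0$ is invoked; everything else is routine exponent bookkeeping and standard behaviour of Cremona away from its base locus.
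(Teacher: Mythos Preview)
The paper does not actually prove this lemma; it is quoted from \cite[Theorem~3]{DumnickiAlgorithm} and \cite[Lemma~B.1.2]{brianlinear}. Your direct monomial computation is the standard argument and is in substance correct, but there is one slip worth flagging.

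In the multiplicity paragraph you correctly compute that a monomial of $G$ has order $(N-1)d + a_{i-1} - \sum_{j\neq i} m_j = (m_i+k) + a_{i-1}$ at $e_{i-1}$. The lower bound you then derive, $a_{i-1} \geqslant \sum_{j\neq i} m_j - (N-1)d = -(m_i+k)$, when substituted back, only gives order $\geqslant 0$, not order $\geqslant m_i+k$. What you actually need at this step is the trivial fact $a_{i-1} \geqslant 0$, which immediately yields order $\geqslant m_i+k$; the elaborate bound from the other coordinate constraints is both unnecessary and insufficient here. So the conclusion is right, but not for the stated reason.

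The genuine role of the hypothesis $m_i+k \geqslant 0$ is in the inverse direction. When you apply the same recipe to $G \in [I(m_1+k,\ldots,m_{N+1}+k,\ldots)]_{d+k}$, you must divide $\Phi^*(G)$ by $x_0^{m_1+k}\cdots x_N^{m_{N+1}+k}$, and this requires the multiplicity condition $m_i+k$ at $e_{i-1}$ to actually constrain the exponents of $G$. If some $m_i+k<0$ the condition is vacuous (by the paper's convention $\pp^{m}=\kk[\PP^N_\kk]$ for $m\leqslant 0$), the target space is strictly larger, and you only get an injection. With this correction your involutive argument for bijectivity goes through. Your treatment of the points $P_j$, $j\geqslant N+2$, via local biregularity of $\Phi$ off the coordinate hyperplanes together with genericity of the transformed configuration is the right idea, if brief.
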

 
 The following Lemmas, due to \cite[Theorem 4]{DumnickiAlgorithm} and \cite[Proposition 10]{Dumnicki2015}, are very helpful in our reduction process. Our assumption for the set of points is still generic or general.
 
 \begin{lemma}\cite[Theorem 4]{DumnickiAlgorithm} \label{lemma: Theorem 4 from Dumnicki Algorithm}
 	Let $N \geqslant 2$, let $d, m_1, m_2, \dots, m_r \in \NN$. If $(N-1)d- \sum_{j=1}^N m_j < 0, m_j >0$ for $j=1, \dots, N$ then 
 	$$\dim [I(m_1, \dots m_s)]_d=\dim [I( m_1-1, \dots, m_N-1, m_{N+1}, \dots m_r )]_{d-1}.$$
 \end{lemma}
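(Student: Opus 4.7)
My plan is to establish the dimension equality by constructing an explicit linear isomorphism between the two systems via multiplication and division by a distinguished linear form. Since the $N$ generic points $P_1,\ldots,P_N$ span a unique hyperplane $H \subset \PP^N$, pick a linear form $L \in S_1$ defining $H$. The central geometric claim is that under the hypothesis $(N-1)d < \sum_{j=1}^N m_j$, every $F \in [I(m_1,\ldots,m_s)]_d$ vanishes identically along $H$, and hence is divisible by $L$.

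To prove this vanishing, restrict $F$ to $H \cong \PP^{N-1}$; the restriction lies in the linear system $\L_{N-1}(d; m_1,\ldots,m_N)$ of degree-$d$ forms on $\PP^{N-1}$ vanishing to order $m_i$ at $P_i$ for $i = 1, \ldots, N$. I would show this system is trivial. By upper semicontinuity of the dimension of a linear system as the point configuration varies, it suffices to exhibit one configuration for which the dimension is zero; I would take $P_1,\ldots,P_N$ to be the standard coordinate points of $\PP^{N-1}$. Then the $m_i$-th power of the defining ideal of $P_i$ is the monomial ideal generated by monomials of degree $m_i$ in the variables other than $x_i$, so a degree-$d$ monomial $x_1^{a_1}\cdots x_N^{a_N}$ belongs to $\bigcap_i \p_i^{m_i}$ precisely when $a_i \leqslant d - m_i$ for each $i$; summing yields $d = \sum_i a_i \leqslant Nd - \sum_i m_i$, i.e.\ $\sum_i m_i \leqslant (N-1)d$, contradicting the hypothesis. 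Hence no such nonzero form exists at the coordinate configuration, and by upper semicontinuity the same holds for the generic configuration.

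Having deduced $F = LG$ with $\deg G = d-1$, I would check that $G$ vanishes to order $m_i - 1$ at each $P_i$ for $i \leqslant N$ (using $m_i > 0$ together with the fact that $L$ vanishes to order exactly $1$ at each such $P_i$) and to order $m_j$ at $P_j$ for $j > N$ (since generically $L(P_j) \neq 0$). Thus $F \mapsto F/L$ is an injective linear map from $[I(m_1,\ldots,m_s)]_d$ into $[I(m_1-1,\ldots,m_N-1, m_{N+1},\ldots,m_s)]_{d-1}$; conversely, multiplication by $L$ is a two-sided inverse, since for any $G$ in the target space the product $LG$ visibly vanishes to order $m_i$ at each $P_i$ for $i \leqslant N$ and to order $m_j$ at $P_j$ for $j > N$. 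This gives the desired isomorphism and the dimension equality. I expect the main subtlety to be the semicontinuity reduction to the coordinate-point configuration; the rest is elementary manipulation with a single linear form.
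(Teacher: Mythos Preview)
The paper does not give its own proof of this lemma; it simply quotes \cite[Theorem 4]{DumnickiAlgorithm}. Your argument is correct and is essentially the standard one: the hyperplane $H$ through $P_1,\ldots,P_N$ is a fixed component of every divisor in the system under the hypothesis $(N-1)d<\sum_{j=1}^N m_j$, and dividing by the defining linear form $L$ gives the stated isomorphism. One small simplification: you do not need semicontinuity for the restricted system on $H\cong\PP^{N-1}$. Since $P_1,\ldots,P_N$ span $H$, they are $N$ points in linearly general position in $\PP^{N-1}$, hence projectively equivalent to the coordinate points; your monomial count then applies directly after a linear change of coordinates. The remaining verifications (that $L$ has order exactly $1$ at each $P_i$ for $i\leqslant N$, that $L(P_j)\neq 0$ for generic $P_j$ with $j>N$, and that multiplication by $L$ inverts the map) are exactly as you describe.
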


 \begin{lemma}{\cite[Proposition 10]{Dumnicki2015}} \label{lemma: adding multiplicities Dumnicki}
 	Let  $m_1, \dots, m_r, m_1', \dots, m'_s, t,k$ be integers. If $I(m_1, \dots m_r)_k =0$ and 	$I(m_1', \dots, m'_s, k+1 )_t =0,$
 	then $I(m_1, \dots, m_r, m_1', \dots, m'_s )_t=0 .$
 \end{lemma}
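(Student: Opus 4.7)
The plan is to argue by contradiction: suppose for a generic configuration of points $P_1,\dots,P_r,Q_1,\dots,Q_s$ there is a nonzero $F\in [I(m_1,\dots,m_r,m_1',\dots,m_s')]_t$. I will derive a contradiction by combining the two hypotheses via an auxiliary generic point $P_0$.

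First, by hypothesis $(2)$ and the elementary observation that imposing an additional multiplicity condition only shrinks a graded component of the ideal, we get
\[
[I(m_1,\dots,m_r,m_1',\dots,m_s',k+1)]_t \;\subseteq\; [I(m_1',\dots,m_s',k+1)]_t \;=\; 0,
\]
so $\mathrm{mult}_{P_0}(F)\leqslant k$ for every generic auxiliary point $P_0$. Next, I plan to force the reverse bound $\mathrm{mult}_{P_0}(F)\geqslant k+1$ by means of a suitable specialization that exploits hypothesis $(1)$. To this end, consider a one-parameter flat family inside the configuration space in which the points $P_1,\dots,P_r$ specialize to a common generic point $P_0$ while the $Q_j$'s are held fixed. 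By the semicontinuity of $\dim [I(\cdot)]_t$, a nonzero $F$ for the generic configuration persists (after rescaling) to a nonzero limit $F_0$ in the degree-$t$ part of the ideal of $Z+m_1'Q_1+\cdots+m_s'Q_s$, where $Z$ is the flat-limit scheme of $m_1P_1+\cdots+m_rP_r$ supported at $P_0$.

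The crucial claim is then that one can choose the degeneration so that the limit scheme $Z$ still satisfies $[I_Z]_k=0$; combined with $F_0\in [I_Z]_t$, this forces the Taylor expansion of $F_0$ at $P_0$ to vanish through total degree $k$, giving $\mathrm{mult}_{P_0}(F_0)\geqslant k+1$ and contradicting the upper bound established above. This last claim is the principal technical obstacle of the plan: a naive generic curvilinear collision of the $P_i$'s typically fails to preserve the vanishing $[I(\cdot)]_k=0$ in the limit, so one must work with a more delicate specialization (essentially a sufficiently generic path inside the Hilbert scheme of $r$-tuples preserving the Hilbert-function vanishing through degree $k$ along the family). This is precisely the combinatorial/scheme-theoretic step carried out by \cite{Dumnicki2015}, whose conclusion we are quoting here; modulo it, the chain of inequalities above yields the required contradiction and completes the proof.
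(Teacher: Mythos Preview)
The paper does not give its own proof of this lemma; it is simply quoted from \cite[Proposition~10]{Dumnicki2015} and used as a black box throughout. There is therefore no argument in the paper to compare your attempt against.

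Your sketch, however, has a genuine error beyond the gap you acknowledge. You claim that if the limit scheme $Z$ is supported at $P_0$ and $[I_Z]_k=0$, then any $F_0\in[I_Z]_t$ satisfies $\mult_{P_0}(F_0)\geqslant k+1$. This implication is false: the vanishing $[I_Z]_k=0$ only says that $I_Z$ contains no global form of degree~$k$; it does \emph{not} force $I_Z\subseteq\mathfrak{m}_{P_0}^{k+1}$. For an explicit counterexample in $\PP^2$ with $k=2$ and $P_0=[0{:}0{:}1]$, take the affine ideal $(x^3,\,x^2y,\,xy^2,\,y^4,\,xy+y^3)$: it is $\mathfrak{m}$-primary of colength~$6$ (hence a flat limit of six simple points, for which hypothesis~(1) holds with $k=2$), contains no nonzero polynomial of degree~$\leqslant 2$, yet the form $XYZ+Y^3\in[I_Z]_3$ has multiplicity~$2$ at $P_0$. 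What a collision argument actually requires is a degeneration whose limit scheme \emph{contains} the fat point $(k+1)P_0$, i.e.\ $I_Z\subseteq\mathfrak{m}_{P_0}^{k+1}$; merely preserving $[I_Z]_k=0$ along the family is not sufficient. There is also a mismatch in your contradiction (the upper bound is stated for the generic $F$ at an independent generic $P_0$, where it is trivially~$0$, while the lower bound is for the limit $F_0$), and since the step you call the ``principal technical obstacle'' is deferred back to \cite{Dumnicki2015}, the proposal is ultimately circular.
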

 
 We also recall some well known results about Waldschmidt constants of defining ideals of small number of points, see also \cite{NagelTrokInterpolation}.
 \begin{lemma}\label{lemma: known inequalities of Waldschmidt constant}
 	If	$\ahat(s) = \ahat(I (1^{\times s}))$  is the Waldschmidt constant the defining ideal of $s$ generic points in $\PP^N$, then the followings are true  
 	\begin{enumerate}
 		\item $\ahat(s) \geqslant \ahat(k)$ whenever $s \geqslant k$;  
 		\item  $\alpha(I(m^{\times s})) \geqslant m\ahat(s) $;
 		\item $ \ahat \big(I \big( 1 ^{\times {k^N}}\big) \big) =k$,  More precisely, $I(m^{\times {k^N}})_{km-1}=0$, when $k \geqslant 2$ \cite{EVAIN2005516, DTG2017}.
 	\end{enumerate}
 \end{lemma}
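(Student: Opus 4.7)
The plan is to treat the three parts separately, since they rely on quite different ingredients.

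For (1), I would use monotonicity of $\alpha$ under ideal containment. A generic set of $s$ points contains a generic subset of $k \leqslant s$ points, so $I(1^{\times s}) = \p_1 \cap \cdots \cap \p_s \subseteq \p_1 \cap \cdots \cap \p_k = I(1^{\times k})$. This containment passes to every symbolic power, giving $\alpha(I(1^{\times s})^{(m)}) \geqslant \alpha(I(1^{\times k})^{(m)})$ for every $m$. Dividing by $m$ and letting $m \to \infty$ yields $\ahat(s) \geqslant \ahat(k)$.

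For (2), the key identity is that for the radical ideal $I = I(1^{\times s}) = \p_1 \cap \cdots \cap \p_s$ in the polynomial ring, each $\p_i^m$ is $\p_i$-primary and the $\p_i$ are pairwise incomparable minimal primes of $I$; thus the $m$-th symbolic power decomposes as $I^{(m)} = \p_1^m \cap \cdots \cap \p_s^m = I(m^{\times s})$. Combined with the infimum description $\ahat(I) = \inf_{m \in \NN} \alpha(I^{(m)})/m$ stated in the preceding definition, this gives $\alpha(I(m^{\times s})) = \alpha(I^{(m)}) \geqslant m\, \ahat(I) = m\, \ahat(s)$.

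For (3), I would invoke Evain's theorem \cite{EVAIN2005516} (see also \cite{DTG2017}) as a black box. Evain establishes the sharp equality $\alpha(I(m^{\times k^N})) = km$ for every $m \geqslant 1$ when $\{P_1, \dots, P_{k^N}\}$ is generic and $k \geqslant 2$; in particular, $I(m^{\times k^N})_{km-1} = 0$, which is the ``more precisely'' statement. Dividing by $m$ and taking the infimum yields $\ahat(k^N) = k$. The genuine obstacle of this lemma lies entirely in (3) --- namely in Evain's degeneration/limit argument, which is highly nontrivial --- while (1) and (2) reduce to standard ideal-theoretic manipulations and to the basic properties of the Waldschmidt constant already recorded above.
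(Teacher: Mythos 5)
The paper records this lemma without proof, as a recall of known facts (the introductory sentence explicitly frames it as ``well known results''); there is therefore no proof of the paper's own to compare against. Your arguments for (1) and (2) are correct and are exactly the standard derivations: reverse inclusion of defining ideals under adding points, passed to symbolic powers and to the limit for (1), and the identification $I^{(m)} = I(m^{\times s})$ together with the infimum characterization $\ahat(I) = \inf_m \alpha(I^{(m)})/m$ for (2).

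One correction in (3): you attribute the full $k \geqslant 2$ result to Evain, but Evain \cite{EVAIN2005516} only establishes $I(m^{\times k^N})_{km-1} = 0$ for $N \geqslant 3$ and $k \geqslant 3$, as the paper's own introduction notes. The case $k = 2$ --- precisely the one the subsequent Cremona reduction (Theorem \ref{theorem: reduction on Waldschmidt consant}, Proposition \ref{proposition: 2^N reduces to 2}) uses throughout, since the reduction strips off blocks of $2^N$ points --- is due to the second cited reference \cite{DTG2017}. Also, Evain's theorem is the lower bound $\alpha(I(m^{\times k^N})) \geqslant km$; the complementary upper bound $\alpha(I^{(m)}) \leqslant m\alpha(I) = mk$ that turns it into the equality $\ahat(k^N) = k$ is elementary but should be said separately rather than attributed to Evain. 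These are attribution clarifications rather than gaps; the overall structure of your argument is sound.
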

 
 \begin{proposition} \cite[Proposition B.1.1]{brianlinear} \label{proposition: known bounds on Wladschmidt constant upto N+3 points}
 	If $I(1^{\times s})$ denotes the ideal defining $s$ many generic points in $\PP^N$, then 
 	\begin{enumerate}
 		\item $\ahat\big(I \big( 1^{\times (N+1)}\big) \big) \geqslant \dfrac{N+1}{N}$;
 		\item $\ahat\big(I \big( 1^{\times (N+2)}\big) \big) \geqslant \dfrac{N+2}{N}$;
 		\item $\ahat\big(I \big( 1^{\times (N+3)}\big) \big) \geqslant \dfrac{N+2}{N}$ if $N$ is even;
 		\item $\ahat\big(I \big( 1^{\times (N+3)}\big) \big) \geqslant 1+\dfrac{2}{N}+\dfrac{2}{N^3+2N^2-N}$ if $N$ is odd.
 	\end{enumerate}
 \end{proposition}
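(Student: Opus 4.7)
The plan is to derive each of the four bounds by showing directly that the linear systems $[I(m^{\times s})]_d$ are empty for small degrees, using Cremona transformations (Lemma~\ref{lemma: cremoa from brian}) in combination with the vanishing reduction of Lemma~\ref{lemma: Theorem 4 from Dumnicki Algorithm}. Since $\ahat(I) = \inf_m \alpha(I^{(m)})/m$, it suffices to establish $\alpha(I^{(m)}) \geqslant cm$ for the claimed bound $c$ and every $m \geqslant 1$, equivalently that $[I(m^{\times s})]_d = 0$ whenever $d < cm$.

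For part~(1) with $s = N+1$, any $N+1$ points in generic position in $\PP^N$ form a projective basis, so after a linear change of coordinates we may place them at the coordinate vertices $e_0, \ldots, e_N$. A monomial $x_0^{a_0}\cdots x_N^{a_N}$ of total degree $d$ lies in $I(m^{\times(N+1)})$ iff $a_i \leqslant d - m$ for every $i$. Summing over $i$ yields $d \leqslant (N+1)(d-m)$, forcing $d \geqslant (N+1)m/N$, so $\ahat(I(1^{\times(N+1)})) \geqslant (N+1)/N$.

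For parts~(2)--(4), the strategy is to place $N+1$ of the points at the coordinate vertices and the remaining one or two generic points at locations whose coordinates are fixed or tractably moved by Cremona, e.g., $[1:1:\cdots:1]$. Assuming a nonzero element exists in $[I(m^{\times s})]_d$ with $d$ below the target bound, we apply Lemma~\ref{lemma: cremoa from brian} with $k = (N-1)d - (N+1)m$ to the first $N+1$ multiplicities: when $m + k \geqslant 0$, this yields a linear isomorphism onto a system of strictly smaller degree with adjusted multiplicities, and iterating the descent (combined with Lemma~\ref{lemma: adding multiplicities Dumnicki} to incorporate mixed multiplicities) eventually produces a linear system that is manifestly empty. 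When $m + k < 0$, we apply Lemma~\ref{lemma: Theorem 4 from Dumnicki Algorithm} repeatedly to cut both the degree and the first $N$ multiplicities by $1$, again driving the system to emptiness.

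The main obstacle is the odd-$N$ case of~(4): the bound $1 + \frac{2}{N} + \frac{2}{N^3+2N^2-N}$ is not a ratio of small integers, so the Cremona loop does not terminate at a clean stopping point, and one must pinpoint the exact value of $m$ (or an infinite family of $m$) for which the reduction is sharp. This requires careful bookkeeping of the Cremona orbits of the multiplicity tuple and verification that no shorter descent path exists, which is where most of the work sits. The asymmetry between the even and odd cases for $s = N+3$ reflects exactly this subtlety: when $N$ is even, the Cremona loop closes after a bounded number of steps and returns the clean bound $(N+2)/N$, while for $N$ odd it produces a residual rational defect captured by the $2/(N^3+2N^2-N)$ correction.
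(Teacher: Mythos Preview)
The paper does not prove this proposition; it is quoted verbatim from \cite{brianlinear} as a known input, so there is no in-paper argument to compare against. Your overall strategy---place $N+1$ of the points at the coordinate vertices and drive the degree down via Cremona (Lemmas~\ref{lemma: cremoa from brian} and~\ref{lemma: Theorem 4 from Dumnicki Algorithm})---is indeed the method of the cited reference, and your argument for part~(1) is complete and correct.

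For parts~(2)--(4), however, what you have written is a plan rather than a proof, and there are two concrete gaps. First, ``iterating the descent'' based at the \emph{same} $N+1$ coordinate points does nothing: Cremona is an involution, so two applications return you to the start. Any genuine iteration must change which $N+1$ of the $s$ points play the role of base points at each step, and you never specify this. (For $s=N+2$ no iteration is even needed: one Cremona step gives new degree $Nd-(N+1)m$, which under the hypothesis $d<(N+2)m/N$ is already strictly less than the unchanged multiplicity $m$ at the $(N+2)$-th point, forcing emptiness. So the hard content is entirely in~(3) and~(4).) Second, for $s=N+3$ the actual argument in \cite{brianlinear} tracks the full orbit of the tuple $(d;m^{\times(N+3)})$ under the standard Cremona/Weyl-group action, and the parity dichotomy together with the exact constant $\dfrac{2}{N^3+2N^2-N}$ falls out of that orbit computation; you acknowledge this is ``where most of the work sits'' but do not do it, so the bounds in~(3) and~(4) remain unestablished. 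Finally, the appeal to Lemma~\ref{lemma: adding multiplicities Dumnicki} is misplaced here: that lemma glues two \emph{disjoint} point sets, whereas the Cremona descent on $N+2$ or $N+3$ points never splits the configuration.
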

 We have mentioned generic and general points many times before. Now we recall some facts about specialization, generic and general points in $\PP^N_\kk(\z)$. The set of all collections of $s$ not necessarily distinct points in $\PP^N_\kk$ is parameterized by the \emph{Chow variety} $G(1,s,N+1)$ of $0$-cycles of degree $s$ in $\PP^N_\kk$ (cf. \cite{GKZ1994}). Thus, a property $\mathcal{P}$ is said to hold for a \emph{general} set of $s$ points in $\PP^N_\kk$ if there exists an open dense subset $U \subseteq G(1,s,N+1)$ such that $\mathcal{P}$ holds for any $\XX \in U$.
 
 Let $(z_{ij})_{1 \leqslant i \leqslant s, 0 \leqslant j \leqslant N}$ be $s(N+1)$ new indeterminates. We shall use $\z$ and $\a$ to denote the collections $(z_{ij})_{1 \leqslant i \leqslant s, 0 \leqslant j \leqslant N}$ and $(a_{ij})_{1 \leqslant i \leqslant s, 0 \leqslant j \leqslant N}$, respectively. Let
 $$P_i(\z) = [z_{i0}: \dots : z_{iN}] \in \PP^N_{\kk(\z)} \quad \text{ and } \quad \XX(\z) = \{P_1(\z), \dots, P_s(\z)\}.$$
 The set $\XX(\z)$ is often referred to as the set of $s$ \emph{generic} points in $\PP^N_{\kk(\z)}$. For any $\a \in \AA^{s(N+1)}_\kk$, let $P_i(\a)$ and $\XX(\a)$ be obtained from $P_i(\z)$ and $\XX(\z)$, respectively, by setting $z_{ij} = a_{ij}$ for all $i,j$. There exists an open dense subset $W_0 \subseteq \AA^{s(N+1)}_\kk$ such that $\XX(\a)$ is a set of distinct points in $\PP^N_\kk$ for all $\a \in W_0$ (and all subsets of $s$ points in $\PP^N_\kk$ arise in this way). The following result allows us to focus on open dense subsets of $\AA^{s(N+1)}_\kk$ when discussing general sets of points in $\PP^N_\kk$.
 
 \begin{lemma}[\protect{\cite[Lemma 2.3]{FMX2018}}] \label{lem.Hilbert}
 	Let $W \subseteq \AA^{s(N+1)}_\kk$ be an open dense subset such that a property $\mathcal{P}$ holds for $\XX(\a)$ whenever $\a \in W$. Then, the property $\mathcal{P}$ holds for a general set of $s$ points in $\PP^N_\kk$.
 \end{lemma}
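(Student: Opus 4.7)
The plan is to descend the property from the affine parameter space $\AA^{s(N+1)}_\kk$ to the Chow variety $G(1,s,N+1)$ via the natural map $\pi \colon U \to G(1,s,N+1)$ sending $\a \mapsto \XX(\a)$, where $U := \AA^{s(N+1)}_\kk \setminus Z$ and $Z$ is the closed subset on which some row $(a_{i0}, \ldots, a_{iN})$ vanishes (so that $P_i(\a)$ would be ill defined). This map factors through the $(\GG_m)^s \rtimes S_s$-action on $U$: first pass to $(\PP^N_\kk)^s$ via the product of $\GG_m$-quotients $\AA^{N+1}_\kk \setminus \{\0\} \to \PP^N_\kk$, then take the $S_s$-quotient permuting the $s$ factors. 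The image of $\pi$ is exactly the open dense subset $G^{\circ} \subseteq G(1,s,N+1)$ parameterizing reduced $0$-cycles of $s$ distinct points in $\PP^N_\kk$.

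The main technical point is that $\pi$ is open. Both quotient maps in the factorization are geometric quotients by actions of linearly reductive algebraic groups (a torus, respectively a finite group), hence are open morphisms, and composition preserves openness. Given the open dense $W$ from the hypothesis, I would set $W' := W \cap U$, which is still open dense in $\AA^{s(N+1)}_\kk$, and define $V := \pi(W')$. Openness of $\pi$ gives that $V$ is open in $G(1,s,N+1)$. To show $V$ is dense, fix any nonempty open $V' \subseteq G(1,s,N+1)$: the preimage $\pi^{-1}(V' \cap G^{\circ})$ is a nonempty open subset of $U$ by surjectivity of $\pi$ onto $G^{\circ}$, and it must meet the dense set $W'$, producing an $\a$ with $\pi(\a) \in V \cap V'$. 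Hence $V$ is open and dense in $G(1,s,N+1)$.

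To finish, for any $\XX \in V$ there is by construction some $\a \in W'$ with $\XX(\a) = \XX$, so the hypothesis supplies $\mathcal{P}$ for $\XX(\a) = \XX$. This last step uses essentially that the property $\mathcal{P}$ in question (bounds on Waldschmidt constants, stable containments of symbolic and ordinary powers, Chudnovsky-type inequalities, etc.) depends only on the underlying point set $\XX$ and not on the particular affine representatives $\a$ chosen for its points. The principal obstacle in the argument is verifying the openness of $\pi$, which reduces to the standard fact that quotients by torus actions and by finite group actions on quasi-affine varieties are open morphisms; once this is granted, the remainder is a routine topological argument carried out above.
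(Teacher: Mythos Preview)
The paper does not supply its own proof of this lemma; it is quoted verbatim from \cite[Lemma 2.3]{FMX2018} and invoked as a black box. Your argument is a correct reconstruction of the standard proof: the essential content is precisely that the parametrization map $\pi$ from the affine space of coordinate matrices (minus the bad locus) to the Chow variety is open and dominant, so open dense subsets push forward to open dense subsets.

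One small inaccuracy worth flagging: the image of $\pi$ is not just the locus $G^{\circ}$ of reduced $0$-cycles but all of $G(1,s,N+1)$, since nothing in your definition of $U$ forces the rows $(a_{i0},\dots,a_{iN})$ to represent distinct projective points. This only helps you---surjectivity onto the whole Chow variety makes the density argument for $V$ even more direct, and you may drop the intersection with $G^{\circ}$ in that step. Everything else (openness via the torus and symmetric-group quotients, and the observation that $\mathcal{P}$ depends only on the underlying scheme $\XX$) is in order.
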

 
 \begin{definition}[Krull] \cite[Definition 2.8]{bghn2021chudnovskys}
 	Let $\x$ represent the coordinates $x_0, \dots, x_N$ of $\PP^N_\kk$. Let $\a \in \AA^{s(N+1)}$. The \emph{specialization} at $\a$ is a map $\pi_\a$ from the set of ideals in $\kk(\z)[\x]$ to the set of ideals in $\kk[\x]$, defined by
 	$$\pi_\a(I) := \{f(\a,\x) ~\big|~ f(\z,\x) \in I \cap \kk[\z,\x]\}.$$
 \end{definition}
 
 \begin{remark} \cite[ Remark 2.9]{bghn2021chudnovskys}
 	Let $\pp_i(\z)$ and $\pp_i(\a)$ be the defining ideals of $P_i(\z) \in \PP^N_{\kk(\z)}$ and $P_i(\a) \in \PP^N_\kk$, respectively. It follows from \cite[Satz 1]{Krull1948} that there exists an open dense subset $W \subseteq W_0 \subseteq \AA^{s(N+1)}$ such that, for all $\a \in W$ and any $1 \leqslant i \leqslant s$, we have
 	$$\pi_\a(\pp_i(\z)) = \pp_i(\a).$$
 	We shall always assume that $\a \in W$ whenever we discuss specialization in this paper.
 \end{remark}
 
 \begin{remark} \cite[Remark 2.10]{bghn2021chudnovskys} \label{rmk.KrullSpecialization}
 	Observe that, by the definition and by \cite[Satz 2 and 3]{Krull1948} (see also \cite[Propositions 3.2 and 3.6]{NhiTrung1999}), for fixed $m, r, t \in \NN$, there exists an open dense subset $U_{m,r,t} \subseteq W$ such that for all $\a \in U_{m,r,t}$, we have
 	$$\pi_\a\left(I(\z)^{(m)}\right) = I(\a)^{(m)} \text{ and } \pi_\a\left({\mm}_{\z}^t I(\z)^r\right) = \mm^t I(\a)^r.$$
 	Here, we use $\mm$ and $\mm_\z$ to denote the maximal homogeneous ideals of $\kk[\x]$ and $\kk(\z)[\x]$, respectively. Note that $\mm_\z$ is the extension of $\mm$ in $\kk(\z)[\x]$. We shall make use of this fact often.
 \end{remark}
 
\section{Reduction Process and Lower Bound on Waldschmidt Constant for Small Numbers of Points} \label{sec.reduction}
We start this section by a consequence of Lemma \ref{lemma: cremoa from brian} and Lemma \ref{lemma: Theorem 4 from Dumnicki Algorithm}. The following result will be our essential tool to get appropriate lower bounds on the Waldschmidt constant. 
\begin{lemma} \label{lemma: reduction of multiplicity of points}
	Let $I(m_1, \dots, m_s)$ denote the ideal of $s$ generic points or general points with multiplicities $m_1, \dots, m_s$ respectively. If $$I(m_1, \dots, m_s)_d \neq 0, \text{ then } I (m_1+k, \dots,m_{N+1}+k,m_{N+2}, 
	\dots,  m_s ) _{d+k} \neq 0,$$ where $k = (N-1)d-\sum_{j=1}^{N+1}m_j $. 
\end{lemma}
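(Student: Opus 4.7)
The plan is to carry out a case analysis on whether the non-negativity condition in the hypothesis of Lemma~\ref{lemma: cremoa from brian} already holds, and reduce the remaining case to it via a finite iteration of Lemma~\ref{lemma: Theorem 4 from Dumnicki Algorithm}.

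If $m_i + k \geqslant 0$ for every $i \in \{1, \ldots, N+1\}$, then Lemma~\ref{lemma: cremoa from brian} applies and produces a linear isomorphism
\[
[I(m_1, \ldots, m_s)]_d \xrightarrow{\;\sim\;} [I(m_1+k, \ldots, m_{N+1}+k, m_{N+2}, \ldots, m_s)]_{d+k},
\]
so non-vanishing transfers directly. Otherwise there is some index with $m_i + k < 0$, which forces $k < 0$; after a reordering we may assume $m_{N+1} = \min\{m_1, \ldots, m_{N+1}\}$ and $m_{N+1} < |k|$. Using the identity $\sum_{j=1}^{N+1} m_j = (N-1)d + |k|$ (which follows from $k < 0$), the key computation is
\[
\sum_{j=1}^{N} m_j = (N-1)d + |k| - m_{N+1} > (N-1)d,
\]
which is exactly the hypothesis of Lemma~\ref{lemma: Theorem 4 from Dumnicki Algorithm}. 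That lemma then yields
\[
[I(m_1, \ldots, m_s)]_d \neq 0 \iff [I(m_1-1, \ldots, m_N-1, m_{N+1}, m_{N+2}, \ldots, m_s)]_{d-1} \neq 0,
\]
and a short substitution shows that the associated $k$-value for the new multiplicity sequence is exactly $k+1$. Iterating, each step preserves non-vanishing while strictly increasing $k$ by $1$.

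After at most $|k|$ such iterations, either the first case becomes applicable mid-stream or $k$ reaches $0$ (in which case the first case holds trivially, since the original $m_i$ were non-negative); Lemma~\ref{lemma: cremoa from brian} then provides a non-zero element in a subideal of the target ideal, which is enough to conclude that the target ideal is itself non-vanishing in degree $d+k$. The main obstacle is maintaining the hypotheses of Lemma~\ref{lemma: Theorem 4 from Dumnicki Algorithm} throughout the iteration---most notably the positivity condition $m_j > 0$ for $j=1, \ldots, N$ on the multiplicities being decremented. I would handle this by re-sorting the first $N+1$ multiplicities at the start of each step, always placing the current minimum at position $N+1$ and decrementing the $N$ largest; the degenerate situation in which two or more of the first $N+1$ multiplicities become simultaneously zero can be treated separately by dropping the corresponding points and applying Lemma~\ref{lemma: cremoa from brian} with a reduced set of base points.
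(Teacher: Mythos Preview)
Your plan is correct and follows the same underlying idea as the paper: reduce via Lemma~\ref{lemma: Theorem 4 from Dumnicki Algorithm} until the non-negativity hypothesis of Lemma~\ref{lemma: cremoa from brian} is met, then apply Cremona and observe that the output lands in a subideal of the target (because the indices $i$ with $m_i+k<0$ contribute trivially). The paper organizes the reduction as a batched case analysis on how many of the $m_i+k$ are negative, carrying out each case explicitly; you instead run the reduction one step at a time with a re-sort, which is equivalent and arguably cleaner.

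One point worth tightening: the ``degenerate situation'' you flag at the end in fact never occurs, and you can dispense with the separate treatment. Suppose at some step the sorted current multiplicities $\mu_1\geqslant\cdots\geqslant\mu_{N+1}$ satisfy $\mu_N=0$ (so Lemma~\ref{lemma: Theorem 4 from Dumnicki Algorithm} would not apply). If the current $k'=(N-1)d'-\sum_{i}\mu_i$ were still negative, then $(N-1)d'<\sum_{i=1}^{N-1}\mu_i\leqslant (N-1)\mu_1$, forcing $\mu_1>d'$; but the iterates preserve non-vanishing of the graded piece, so $\mu_1\leqslant d'$, a contradiction. Hence whenever $\mu_N=0$ one already has $k'\geqslant 0$, and since all $\mu_i\geqslant 0$ the stopping condition holds. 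This is precisely the mechanism the paper invokes in each of its cases (the repeated check ``otherwise $m_1>d$''), so your iteration is fully justified without any ad hoc handling of degenerate cases.
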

The spirit of the proof is the same as \cite[Proposition 8]{dumnicki2012symbolic}.
\begin{proof}
	We prove the Theorem case by case. Without any loss of generality we can assume that $m_1 \geqslant m_2 \geqslant m_3 \geqslant \dots \geqslant m_{N+1}$. 
	\begin{itemize}
		\item If $m_{N+1}+k \geqslant 0$, then the conclusion follows from Lemma \ref{lemma: cremoa from brian}.   
		\item Suppose $l=m_{N+1}+k < 0$ and $m_{N}+k\geqslant 0$. Now $(N-1)d-\sum_{i=1}^{N}m_i = m_{N+1}+k=l$. Then by using the hypothesis and repeated application of Lemma \ref{lemma: Theorem 4 from Dumnicki Algorithm}, we will get 
		$$I\big(m_1+l, \dots, m_N+l, m_{N+1}, \overline{m} \big)_{d+l} \neq 0, $$
		where $\overline{m}=(m_{N+2},\ldots ,m_s)$. Note that $m_N + l \geqslant 0$, hence $m_i+l\geqslant 0$ for $i=1,\ldots , N$. In fact, suppose that $m_N + l <0$, then, $m_N +l=m_{N}+m_{N+1}+k=(N-1)d-\sum_{i=1}^{N-1}m_i <0$, which is a contradiction because the last inequality would imply $m_1>d$. Now $d+l < m_{N+1}$ will lead to a contradiction so $d+l \geqslant m_{N+1}$. 
		Again we compute, $k'= (N-1)(d+l)- \sum_{j=1}^N(m_j+l) -m_{N+1}=k-l$. Thus $m_{j}+l+k'=m_j+k \geqslant 0$ for $j=1, \dots N$. Also $m_{N+1}+k' =0$. Thus by Lemma \ref{lemma: cremoa from brian} we get 
		$I\big(m_1+k, \dots, m_N+k, 0, \overline{m} \big)_{d+k} \neq 0, $
		and since $m_{N+1} +k < 0$, we can write 
		$$I\big(m_1+k, \dots, m_N+k, m_{N+1}+k, \overline{m} \big)_{d+k} \neq 0. $$
		
		\item Suppose $l=m_{N+1}+k <0$, $l'=m_{N}+k<0$, and $k+m_j \geqslant 0$, for $j=1, \dots N-1$. Note that $l'\geqslant l$, hence, as in the previous case, by Lemma \ref{lemma: Theorem 4 from Dumnicki Algorithm}, we will get, 
		$$I\big(m_1+l, \dots, m_N+l, m_{N+1}, \overline{m} \big)_{d+l} \neq 0. $$
		Note that $m_i + l \geqslant 0$ for $i=1,\ldots ,N$ by the same argument as above case. We also have that $d+l \geqslant m_{N+1}$. Now, $(N-1)(d+l) -\sum_{j=1}^{N-1}(m_j + l)-m_{N+1}=m_N+k=l'.$ Thus, again by Lemma \ref{lemma: Theorem 4 from Dumnicki Algorithm}, we get that 
		$$I\big(m_1+l+l', \dots,m_{N-1}+l+l', m_N+l, m_{N+1}+l', \overline{m} \big)_{d+l+l'} \neq 0 $$
		Indeed, it is enough to check that $m_{N+1}+l' \geqslant 0$ and $m_i+l+l' \geqslant 0$ for $i=1, \ldots , N-1$. Firstly, $m_{N+1}+l' = m_N+l \geqslant 0$. Secondly, we see that $$m_{N-1}+l+l'\geqslant m_{N-1}+m_{N+1}-d+l' = (N-2)d-\sum_{j=1}^{N-2}m_j \geqslant 0$$
		since otherwise, $m_1<d$, which is a contradiction.\\
		\noindent Now, again, $k' = (N-1)(d+l+l')- \sum_{j=1}^{N-1}(m_j+l+l')-(m_N+l)-(m_{N+1}+l')=k-l-l'$.
		Now, $m_j+l+l'+k' =m_j+k \geqslant 0$, for $j=1, \dots, N-1$. Also, $m_N+l+k'=0$, and $m_{N+1}+k' =0$. Thus by Lemma \ref{lemma: cremoa from brian}, 
		$\big(m_1+k, \dots, m_{N-1}+k, 0,0, \overline{m} \big)_{d+k} \neq 0, $
		and since, $m_N+k <0$, and $m_{N+1}+k <0$, we can write, 
		$$I\big(m_1+k, \dots, m_N+k, m_{N+1}+k, \overline{m} \big)_{d+k} \neq 0. $$ 
		
		\item Suppose $l_i=m_i +k < 0$, for $i= j, \dots N+1$ and $m_{i}+k \geqslant 0$ for $i=1, \dots, j-1.$ Note that $l_1 \geqslant l_2 \geqslant \ldots \geqslant l_{N+1}$ and $m_i+l_j=m_j+l_i$ for all $i,j$. Proceed as above cases, we will get $$I\big(m_1+l_{N+1}, \dots, m_N+l_{N+1}, m_{N+1}, \overline{m} \big)_{d+l_{N+1}} \neq 0. $$
		as long as $m_N+l_{N+1} \geqslant 0$. This is true by exactly the same argument as in first case, that is,
		$m_N+l_{N+1}=(N-1)d-\sum_{i=1}^{N-1}m_i \geqslant 0$. Now, as before, note that $l_{N+1}\geqslant m_{N+1}-d$, and $(N-1)(d+l_{N+1}) -\sum_{j=1}^{N-1}(m_j + l_{N+1})-m_{N+1}=m_N+k=l_N<0,$ apply Lemma \ref{lemma: Theorem 4 from Dumnicki Algorithm} we will get 
		$$I\big(m_1+l_{N+1}+l_{N}, \dots,m_{N-1}+l_{N+1}+l_N, m_N+l_{N+1}, m_{N+1}+l_N, \overline{m} \big)_{d+l+l'} \neq 0 $$
		as long as all the multiplicities appeared above are nonnegative. This is true exactly by the argument as in case 2, that is, $m_{N+1}+l_N= m_{N+1}+l_N\geqslant 0$, and that
		$$m_{N-1}+l_{N+1}+l_N\geqslant m_{N-1}+m_{N+1}-d+l_N = (N-2)d-\sum_{j=1}^{N-2}m_j \geqslant 0.$$
		Now by repeated application of Lemma \ref{lemma: Theorem 4 from Dumnicki Algorithm} ($N+2-j$ times) we will get, 
		$$I\big(m_1+\sum_{i=j}^{N+1} l_i, \dots m_{j-1}+\sum_{i=j}^{N+1} l_i , m_j+\sum_{i \neq j} l_i,  \dots, m_{N+1}+\sum_{i \neq N+1} l_i , \overline{m} \big)_{d+\sum_{i=j}^{N+1} l_i} \neq 0. $$
		Now, $k' = (N-1)(d+\sum_{i=j}^{N+1} l_i)- \sum_{i'=1}^{j-1} \big(m_{i'}+\sum_{i=j}^{N+1} l_i \big) - \sum_{i'=j}^{N+1}  \big(  m_{i'}+\sum_{i \neq i'} l_i  \big) = k - \sum_{i=j}^{N+1}l_i.$
		At the $t+1$ step of the process, the multiplicity
		$$m_{N-t}+\sum_{j=N+t-1}^{N+1} l_j \geqslant (N-t-1)d-\sum_{j=1}^{N-t-1}m_j \geqslant 0$$
		and other multiplicities are nonnegative from the $t$-step.
		Now, note that, $m_{i'}+\sum_{i=j}^{N+1} l_i+k' =m_{i'}+k$, for $i'= 1 \dots j-1$. Also, note that 
		$m_{i'}+\sum_{i \neq i'} l_i+k'  =0$, for $i' =j, \dots, N+1$. Thus by applying Cremona transformation e.g., Lemma \ref{lemma: cremoa from brian} we get, 
		$$I\big(m_1+k, \dots, m_{j-1}+k, \underbrace{0,\dots, 0}_{N+2-j \text{ many}}, \overline{m} \big)_{d+l} \neq 0. $$
		Since $ m_i +k < 0$, for $i= j, \dots N+1$, then we can write 
		$$I\big(m_1+k, \dots, m_{j-1}+k, m_j+k, \dots, m_{N+1}+k, \overline{m} \big)_{d+l} \neq 0. $$
	\end{itemize}
\end{proof}

The following Theorem, inspired by \cite[Proposition 10]{dumnicki2012symbolic} and \cite[Proposition 12]{Dumnicki2015}, along with the Proposition \ref{proposition: 2^N reduces to 2} shown below are the main tools in our reduction process.

\begin{theorem}\label{theorem: reduction on Waldschmidt consant}
	If  $\ahat(s) = \ahat(I (1^{\times s}))$  is the Waldschmidt constant the defining ideal of $s$ generic points in $\PP^N$, then
	$$\ahat\big(b \cdot(2^N)^k\big) \geqslant 2^k\ahat(b), \text{ where } b \text{ and } k \text{ are positive integers}. \label{inequality: main}$$
\end{theorem}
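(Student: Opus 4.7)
The plan is to proceed by induction on $k$. The base case $k = 0$ is trivial. The inductive step reduces to the following single-step inequality, applied with $c = b\cdot (2^N)^{k-1}$:
\[ \ahat(c \cdot 2^N) \;\geq\; 2\, \ahat(c), \quad \text{for every positive integer } c. \]
Iterating this gives $\ahat(b \cdot (2^N)^k) \geq 2 \ahat(b \cdot (2^N)^{k-1}) \geq \cdots \geq 2^k \ahat(b)$, completing the outer induction.

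To establish the single-step inequality, I would prove the following strengthened vanishing claim by induction on $c$, seeded by the classical vanishing $I(m^{\times 2^N})_{2m-1} = 0$ from Lemma \ref{lemma: known inequalities of Waldschmidt constant}(3):
\[ \text{If } I\bigl((2m)^{\times c + j}\bigr)_t = 0, \text{ then } I\bigl(m^{\times c\cdot 2^N}, (2m)^{\times j}\bigr)_t = 0, \text{ for all integers } c, j \geq 0. \]
The base case $c = 0$ is immediate. For the inductive step from $c-1$ to $c$ with $j$ arbitrary, I would apply Lemma \ref{lemma: adding multiplicities Dumnicki} with first system $m^{\times 2^N}$ vanishing at $k = 2m-1$ and second system $\bigl(m^{\times (c-1)\cdot 2^N}, (2m)^{\times j}\bigr)$ together with the obligatory phantom multiplicity $k+1 = 2m$. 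The required second hypothesis becomes $I\bigl(m^{\times (c-1)\cdot 2^N}, (2m)^{\times j+1}\bigr)_t = 0$, which is supplied by the induction hypothesis with parameter $j$ replaced by $j+1$, starting from the assumed vanishing $I\bigl((2m)^{\times c+j}\bigr)_t = 0$.

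Specializing the claim to $j = 0$ and $t = \alpha\bigl(I((2m)^{\times c})\bigr) - 1$ yields
\[ \alpha\bigl(I(m^{\times c\cdot 2^N})\bigr) \;\geq\; \alpha\bigl(I((2m)^{\times c})\bigr) \;\geq\; 2m\,\ahat(c), \]
where the last inequality is the defining property of the Waldschmidt constant of $c$ generic points at common multiplicity $2m$. Dividing by $m$ and taking the infimum over $m$ delivers the single-step inequality, and the theorem follows.

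The principal obstacle is setting up the induction correctly. The second hypothesis of Lemma \ref{lemma: adding multiplicities Dumnicki} always introduces a new multiplicity $k+1 = 2m$ that disappears in the conclusion, so a naive induction on $c$ alone cannot absorb this accumulation. Strengthening the induction to carry an arbitrary number $j$ of extra multiplicities equal to $2m$ is what permits the recursion to propagate cleanly; heuristically, it reflects the collapse of each $2^N$-cluster of multiplicity-$m$ points into a single multiplicity-$2m$ point from the Waldschmidt-constant viewpoint.
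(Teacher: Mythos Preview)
Your proof is correct and follows essentially the same approach as the paper: reduce to the single-step inequality $\ahat(c\cdot 2^N)\geqslant 2\,\ahat(c)$, then iterate. For the single step, both arguments repeatedly apply Lemma~\ref{lemma: adding multiplicities Dumnicki} with the seed $I(m^{\times 2^N})_{2m-1}=0$ to trade each point of multiplicity $2m$ for $2^N$ points of multiplicity $m$; the paper simply says ``apply the lemma $b$ times,'' while your auxiliary parameter $j$ (with $c+j$ fixed) is a clean formal packaging of that same iteration.
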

\begin{proof}
	To prove the inequality, first we prove that $\ahat(2^Nb)\geqslant 2\ahat(b)$ in Claim \ref{claim: base inequality}, then we use induction in Claim \ref{claim: inductive stage} to get the result.
	
	\begin{claim} \label{claim: base inequality}
		If $\ahat$ is described as in the statement, then  $\ahat(2^Nb)\geqslant 2\ahat(b)$.
	\end{claim}
	\begin{proof}
		From Lemma \ref{lemma: known inequalities of Waldschmidt constant}, we have 
		$\ahat(2^N)=2$, hence, $I(m^{\times 2^N})_{2m-1}=0$. If $I((2m)^{\times b})_{t} =0$, then applying Lemma \ref{lemma: adding multiplicities Dumnicki} one time we get that
		$I(m^{\times2^N}, (2m)^{\times(b-1)})_t =0.$
		If we keep applying Lemma \ref{lemma: adding multiplicities Dumnicki} one more times we get, 
		$I(m^{\times2^N}, m^{\times2^N}, (2m)^{\times(b-2)})_t =0$
		Thus, by applying Lemma \ref{lemma: adding multiplicities Dumnicki} total $b$ times we get $I(m ^{\times b \cdot 2^N } )_t =0. $ Therefore $$\alpha\big(I (m ^{\times b \cdot 2^N } ) \big) \geqslant \alpha \big( I((2m)^{\times b}) \big) \geqslant 2m \ahat(b). $$
		Now by dividing each sides by $m$, and taking limit as $n \to \infty$, we get
		$\ahat(2^Nb)\geqslant 2\ahat(b). $
	\end{proof}
	\begin{claim}\label{claim: inductive stage}
		If $\ahat(2^Nb)\geqslant 2\ahat(b)$, then $ \ahat\big(b\cdot(2^N)^k\big) \geqslant 2^k\ahat(b)$. 
	\end{claim}
	\begin{proof}
		We prove this by using induction algorithm. The statement is true for $k=1$. Assume that it is true for $k$. 
		Then $\ahat\big(b\cdot(2^N)^{k+1}\big) =\ahat\big (2^N (b\cdot(2^N)^k)\big) \geqslant 2\ahat\big(b\cdot(2^N)^k\big) =2\cdot 2^{k}\ahat\big(b\big) =2^{k+1}\ahat(b), $
		hence the claim.
	\end{proof}
	This ends the proof of the Theorem.
\end{proof}

\begin{corollary}\label{corollary: 2^k comes outside}
	If $\ahat(s) =\ahat(I(1^{\times s}))$, denotes the Waldschmidt constant of the defining ideal of $s$ generic points in $\PP^N$, and $s\geqslant b \cdot (2^N)^k$, then $$\ahat(s) \geqslant 2^k \ahat(b). $$
\end{corollary}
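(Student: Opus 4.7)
The plan is to combine Theorem \ref{theorem: reduction on Waldschmidt consant} with the monotonicity property of the Waldschmidt constant recorded in Lemma \ref{lemma: known inequalities of Waldschmidt constant}(1). The point is that once we know $\ahat$ is nondecreasing in the number of generic points, the hypothesis $s \geqslant b \cdot (2^N)^k$ immediately lets us pass from $s$ down to the specific count $b \cdot (2^N)^k$, where Theorem \ref{theorem: reduction on Waldschmidt consant} applies.

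First I would invoke Lemma \ref{lemma: known inequalities of Waldschmidt constant}(1) to write
$$\ahat(s) \geqslant \ahat\bigl(b \cdot (2^N)^k\bigr),$$
which is valid because the hypothesis $s \geqslant b \cdot (2^N)^k$ is exactly the comparison required by that lemma. Next, I would apply Theorem \ref{theorem: reduction on Waldschmidt consant} directly to the right-hand side to obtain
$$\ahat\bigl(b \cdot (2^N)^k\bigr) \geqslant 2^k \ahat(b).$$
Chaining the two inequalities yields the claimed bound $\ahat(s) \geqslant 2^k \ahat(b)$.

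There is essentially no obstacle here: the corollary is a short, formal consequence of the two stated results, so the only thing to verify is that the hypotheses of each invoked result are met (they are, by the assumption $s \geqslant b \cdot (2^N)^k$ and by the positivity of $b$ and $k$). No additional Cremona or specialization arguments are required at this step.
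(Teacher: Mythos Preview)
Your proof is correct and matches the paper's approach exactly: the paper simply says the result is ``straightforward from Theorem \ref{theorem: reduction on Waldschmidt consant}, and Lemma \ref{lemma: known inequalities of Waldschmidt constant},'' which is precisely the monotonicity-then-reduction chain you wrote out.
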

\begin{proof}
	This is straightforward from Theorem \ref{theorem: reduction on Waldschmidt consant}, and Lemma \ref{lemma: known inequalities of Waldschmidt constant}. 
\end{proof}
\begin{proposition}\label{proposition: 2^N reduces to 2}
	If $\overline{m} =(m_1, \dots, m_s)$ is a sequence of multiplicities,  then 
	$$ \ahat\big( I \big(1^{\times 2^N}, \overline{m}\big) \big) \geqslant \ahat\big( I \big(2, \overline{m} \big)\big),$$
	where $I \big(1^{\times 2^N}, \overline{m}\big)$ denotes the defining  ideal of  $2^N+s$ generic points, where $2^N$ have multiplicity 1 and the remaining $s$ points have multiplicities $m_1, \dots, m_s$, respectively. As a consequence, 
	$$ \ahat\big( I \big(1^{\times b\cdot 2^N}, \overline{m}\big) \big) \geqslant \ahat\big( I \big(2^{\times b}, \overline{m} \big)\big).$$
	
\end{proposition}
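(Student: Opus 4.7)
The plan is to prove the pointwise inequality of initial degrees
$$\alpha\bigl(I(n^{\times 2^N}, n\overline{m})\bigr) \geqslant \alpha\bigl(I(2n, n\overline{m})\bigr)$$
for every positive integer $n$, then divide by $n$ and let $n \to \infty$. Because the defining ideal of a point in $\PP^N$ is a complete intersection of $N$ linear forms, one has $\pp_i^n = \pp_i^{(n)}$ for each $i$, hence $I(\overline{r})^{(n)} = I(n \overline{r})$ for any sequence $\overline{r}$. In particular, the right-hand side above equals $\alpha\bigl(I(2, \overline{m})^{(n)}\bigr)$ and the left-hand side equals $\alpha\bigl(I(1^{\times 2^N}, \overline{m})^{(n)}\bigr)$, so dividing by $n$ and passing to the infimum (or the limit) yields the first inequality on Waldschmidt constants.

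To establish the inequality of initial degrees I would prove the contrapositive: if $I(2n, n\overline{m})_t = 0$, then $I(n^{\times 2^N}, n\overline{m})_t = 0$. First, Lemma \ref{lemma: known inequalities of Waldschmidt constant}(3), applied with $k = 2$, gives the input
$$I(n^{\times 2^N})_{2n-1} = 0.$$
Next, invoke Lemma \ref{lemma: adding multiplicities Dumnicki} with $k := 2n - 1$, $(m_1, \dots, m_r) = (n^{\times 2^N})$, and $(m_1', \dots, m_s') = n\overline{m}$. The two hypotheses of that lemma read respectively as $I(n^{\times 2^N})_{2n - 1} = 0$ (just recalled) and $I(n\overline{m}, 2n)_t = I(2n, n\overline{m})_t = 0$ (the assumption). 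The conclusion is precisely $I(n^{\times 2^N}, n\overline{m})_t = 0$, which is what we want.

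For the ``as a consequence'' statement I would iterate the first inequality $b$ times. Partition the $b \cdot 2^N$ points of multiplicity $1$ into $b$ disjoint blocks of size $2^N$. Applying the first inequality to one block --- regarding all remaining blocks and the original tail $\overline{m}$ as the new tail --- converts one block of $2^N$ ones into a single point of multiplicity $2$. Absorb the newly created multiplicity-$2$ point into the tail and repeat on the next block. After $b$ such steps every block has been replaced by a single multiplicity-$2$ point, yielding $\ahat\bigl(I(1^{\times b \cdot 2^N}, \overline{m})\bigr) \geqslant \ahat\bigl(I(2^{\times b}, \overline{m})\bigr)$.

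No serious obstacle is expected, since once Lemma \ref{lemma: adding multiplicities Dumnicki} and the evaluation $I(n^{\times 2^N})_{2n-1} = 0$ are in hand the argument is short. The one subtlety that deserves a sentence in the write-up is that the two sides compare $\ahat$ of configurations with different numbers of generic points ($2^N + s$ on the left, $s + 1$ on the right), but this causes no difficulty because genericity is built into the statements of Lemmas \ref{lemma: cremoa from brian}--\ref{lemma: adding multiplicities Dumnicki}, and the iterated application in the ``consequence'' step preserves it at every stage.
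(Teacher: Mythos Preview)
Your proposal is correct and follows essentially the same route as the paper: invoke $I(n^{\times 2^N})_{2n-1}=0$ from Lemma~\ref{lemma: known inequalities of Waldschmidt constant}(3), feed this together with the assumed vanishing $I(2n,n\overline{m})_t=0$ into Lemma~\ref{lemma: adding multiplicities Dumnicki} to deduce $I(n^{\times 2^N},n\overline{m})_t=0$, then pass to the limit and iterate $b$ times for the consequence. Your write-up is in fact slightly more explicit than the paper's (you spell out $I(\overline{r})^{(n)}=I(n\overline{r})$ and the precise parameter choices in Lemma~\ref{lemma: adding multiplicities Dumnicki}), but the argument is the same.
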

\begin{proof}
	The proof follows by the same argument as \cite[Theorem 9]{dumnicki2012symbolic}.
	By Lemma \ref{lemma: known inequalities of Waldschmidt constant} we know that $ I \big(m ^{\times 2^N} \big)_{2m-1} =0$.
	Let  $J= I\big(2, \overline{m} \big)$ and $I= I \big(1^{\times 2^N}, \overline{m} \big)$. Suppose, $J^{(m)}_t =0$. Then by Lemma \ref{lemma: adding multiplicities Dumnicki}, we get 
	$I^{(m)}_t =0.$ Thus $\alpha(I^{(m)}) \geqslant \alpha(J^{(m)})$, which implies that
	$$ \ahat\big( I \big(1^{\times 2^N}, \overline{m}\big) \big) \geqslant \ahat\big( I \big(2, \overline{m} \big)\big).$$
	The latter inequality follows directly by successively applying the above inequality $b$ times
	$$ \ahat\big( I \big(1^{\times b\cdot 2^N}, \overline{m}\big) \big) = \ahat\big( I \big(1^{\times 2^N}, 1^{\times (b-1)\cdot 2^N}, \overline{m}\big) \big) \geqslant \ahat\big( I \big(2, 1^{\times (b-1)\cdot 2^N}, \overline{m}\big) \big) \geqslant \cdots \geqslant \ahat\big( I \big(2^{\times b}, \overline{m} \big)\big).$$
\end{proof}

\begin{remark}
	
	Theorem \ref{theorem: reduction on Waldschmidt consant} and Proposition \ref{proposition: 2^N reduces to 2} suggest that to find lower bounds of $\ahat(s)$, one can hope to reduce the number of points and work on getting lower bounds for a fewer number of points. Sometimes, this reduction gives very useful bounds, as in the following example. 
\end{remark}

\begin{example}
	Consider 128 generic points in $\PP^4$. Then by Proposition \ref{proposition: 2^N reduces to 2}
	$$\ahat(128)\geqslant \ahat(8 \cdot 16) \geqslant 2\ahat(8) \geqslant \dfrac{16}{5}.$$
	The last inequality follows from Lemma \ref{lemma: lower bounds on Waldschmidt constant in P^4}, part $(1)$. We can see that bounds on the Waldschmidt constant of the defining ideal of 8 generic points can be useful to get bounds on 128 generic points. Also note that the new bound  in fact is better than  bound $\ahat(128) \geqslant 3$, which is obtained from the inequality $81 \leqslant 128 \leqslant 256$ and Lemma \ref{lemma: known inequalities of Waldschmidt constant}. 
\end{example}

After reducing the number of points, we need to obtain appropriate lower bounds on the Waldschmidt constant of some small number of points as well. The following results provide us with what we need to proceed in the next section.

\begin{lemma}\label{lemma: lower bounds on Waldschmidt constant in P^4} Let $I(m_1, \dots, m_s)$ be the defining ideal of $s$ generic points with multiplicities, $m_1, \dots, m_s$ in $\PP^4$.  The following inequalities hold
	\begin{enumerate}
		\item  $\ahat(\big(I(1^{\times 8}) \big)) \geqslant \dfrac{8}{5} $;
		\item $\ahat\big(I \big( 2^{\times 4}, 1^{\times 7} \big) \big) \geqslant \dfrac{23}{10}$;
		\item $\ahat\big(I \big(1^{\times 36} \big) \big) \geqslant \dfrac{51}{25}$.
	\end{enumerate}
\end{lemma}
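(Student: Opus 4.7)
My strategy for each of the three bounds is to translate the Waldschmidt-constant inequality $\ahat(I) \geq c$ into a family of vanishings $I^{(m)}_{\lceil c m \rceil - 1} = 0$ along an unbounded arithmetic progression in $m$. Since $\ahat(I) = \lim_{m\to\infty}\alpha(I^{(m)})/m$, establishing $\alpha(I^{(m)})/m \geq c$ along such a progression forces $\ahat(I) \geq c$. The vanishings themselves I would prove by iterated Cremona reductions (Lemma \ref{lemma: reduction of multiplicity of points}), sometimes interleaved with the gluing lemma (Lemma \ref{lemma: adding multiplicities Dumnicki}) and the base vanishings of Lemma \ref{lemma: known inequalities of Waldschmidt constant}, until the system collapses to one whose degree is negative, or whose largest multiplicity exceeds the degree, and must therefore be zero.

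For part (1) the target is $I((5m)^{\times 8})_{8m-1} = 0$ for all $m \geq 1$. Assuming the contrary, applying Cremona with the five largest multiplicities (all equal to $5m$) gives shift $k = 3(8m-1) - 25m = -m - 3$ and reduces the problem to $I((4m-3)^{\times 5}, (5m)^{\times 3})_{7m-4}$. A second Cremona step, whose top five multiplicities are three copies of $5m$ and two copies of $4m-3$ once $m \geq 3$, has shift $-2m-6$; after at most one further iteration, some multiplicity rises above the degree (or the degree becomes negative), and the system must vanish, producing the contradiction. Part (2) follows the same template but terminates after a single Cremona: writing $I(2^{\times 4},1^{\times 7})^{(10s)} = I((20s)^{\times 4}, (10s)^{\times 7})$, the target is $I((20s)^{\times 4}, (10s)^{\times 7})_{23s-1} = 0$; Cremona with top five $(20s,20s,20s,20s,10s)$ has shift $k = -21s - 3$, and after the four and one resulting negative multiplicities collapse to zero, the residual system $I((10s)^{\times 6})_{2s-4}$ vanishes because no form of degree $2s-4 < 10s$ can vanish to order $10s$ at a point.

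Part (3) is the most delicate. Direct Cremona on the symmetric system $I((25m)^{\times 36})_{51m-1}$ produces a positive shift $k = 28m - 3$, enlarging the system to $I((50m)^{\times 5}, (25m)^{\times 31})_{75m + \cdots}$, and naive iterations cycle. To break the cycle, I would interleave Cremona with the gluing lemma (Lemma \ref{lemma: adding multiplicities Dumnicki}) using Evain's base vanishing $I(n^{\times 16})_{2n-1} = 0$ (Lemma \ref{lemma: known inequalities of Waldschmidt constant}(3) with $k = 2$). Splitting $36 = 16 + 20$, one would peel off blocks of $16$ points via the gluing after first breaking the symmetry of the residual system by a judicious Cremona step, reducing the problem to a chain of smaller auxiliary vanishings. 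The main obstacle is that a naive peel-off routes through intermediate systems such as $I((25m)^{\times 4}, (50m)^{\times 2})_{51m-1}$ that are in fact nonempty: the product $H^{25m}(H')^{25m}$ of two hyperplanes through carefully chosen subsets of the six points supplies a concrete element. Thus the sequence of reductions must be organized to sidestep these geometric obstructions, and this combinatorial bookkeeping is the technical heart of the proof.
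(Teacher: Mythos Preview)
Your arguments for parts (1) and (2) are essentially the paper's: the same substitutions $m\mapsto 5m$ (resp.\ $m\mapsto 10m$), the same Cremona shifts, and the same terminal contradictions. The paper records (1) as a three-step table ending at $I\big((2m-9),\dots\big)_{2m-19}$, and does (2) in a single line exactly as you describe.

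For part (3), however, you have located the right toolbox but not the missing idea. You correctly observe that using only Evain's $16$-point vanishing $I(n^{\times 16})_{2n-1}=0$ to split $36=16+20=16+16+4$ forces you through the auxiliary $I\big((25m)^{\times 4},(50m)^{\times 2}\big)_{51m-1}$, which is indeed nonempty (your hyperplane construction, with $H$ through $p_5,p_6,p_1,p_2$ and $H'$ through $p_5,p_6,p_3,p_4$, gives an element of degree $50m<51m-1$). The paper sidesteps this obstruction by \emph{feeding part (1) back into part (3)}: since $\ahat(8)\geqslant 8/5$, one has $I\big((25m)^{\times 8}\big)_{40m-1}=0$, so blocks of \emph{eight} points can be glued via a $40m$-multiplicity rather than only blocks of sixteen via a $50m$-multiplicity. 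The seed auxiliary is chosen accordingly as
\[
I\big((25m)^{\times 4},\,50m,\,(40m)^{\times 2}\big)_{51m-1},
\]
which a single Cremona with $k=-27m-3$ kills (the residual degree $24m-4$ is below the surviving multiplicity $25m$). Two successive applications of Lemma~\ref{lemma: adding multiplicities Dumnicki} with the $8$-point vanishing consume the two $40m$'s and produce $I\big((25m)^{\times 20},50m\big)_{51m-1}=0$; a final glue with $I\big((25m)^{\times 16}\big)_{50m-1}=0$ absorbs the $50m$ and yields $I\big((25m)^{\times 36}\big)_{51m-1}=0$. The organizing principle you were missing is that the reduction for $36$ points is not self-contained: it relies on the already-established bound for $8$ points to create gluing slots of size $40m$ that the purely Evain-based approach cannot supply.
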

\begin{proof}
	We use Lemma \ref{lemma: reduction of multiplicity of points} to prove the bounds. The idea is inspired from  \cite[Proposition 11]{dumnicki2012symbolic} and \cite[Proposition 11]{Dumnicki2015}.
	\begin{enumerate}
		
		
		\item Suppose that $I \big( (5m)^{\times 8}_{8m-1} \big) \neq 0$. We show reduction by repeated application of Lemma \ref{lemma: reduction of multiplicity of points} in the following table which leads to a contradiction.
		\begin{table}[h!]
			\centering
			\begin{tabular}{||c |c c c c c c  c c| c ||} 
				\hline
				d & $m_1$ & $m_2$ & $m_3$ & $m_4$ & $m_5$ & $m_6$ & $m_7$ & $m_8$& k \\ [0.5ex] 
				\hline\hline
				$8m-1$ & $\underline{5m}$ & $\underline{5m}$ & $\underline{5m}$ &$\underline{5m}$  & $\underline{5m}$ & $5m$ & $5m$ & $5m$ &$-m-3$ \\ 
				$7m-4 $ & $ \underline{4m-3}$ & $ \underline{4m-3}$& $4m-3$& $4m-3$& $4m-3$& $\underline{5m}$ & $ \underline{5m}$& $ \underline{5m}$& $-2m-6$\\
				$5m-10 $ &  $2m-9$&$2m-9$&  $\underline{4m-3}$& $ \underline{4m-3}$& $\underline{4m-3}$& $ \underline{3m-6}$& $ \underline{3m-6}$& $ 3m-6$& $-3m-9$ \\
				$2m-19$ &$2m-9 $ & $2m-9$ & $m-12$ & $m-12$& $m-12$& & & $3m-6$ &\\
				\hline
			\end{tabular}
		\end{table}\\
		\	From the last row, $I \big( (2m-9)^{\times 1} \big)_{2m-19} \neq0 $, a contradiction. 
		Hence, $\ahat(\big(I(1^{\times 8}) \big)) \geqslant \dfrac{8}{5} .$

		\item Suppose that $I \big( (20m)^{\times 4}, (10m)^{\times 7} \big)_{23m-1} \neq 0$
		and set $k=3(23m-1)-(4 \cdot 20m +10m)=-21m-3$. Hence by Lemma \ref{lemma: reduction of multiplicity of points}, $I\big(0^{\times 5}, (10m)^{6}  \big)_{2m-4} \neq 0$. which is a contradiction.
		Hence, $\ahat\big(I \big( 2^{\times 4}, 1^{\times 7} \big) \big) \geqslant \dfrac{23}{10}.$
		
		\item We show that $I\big((25m)^{\times 36 })\big)_{51m-1} =0$ for all $m$. If $I\big( (25m)^{\times 4},50m,(40m)^{\times 2} \big)_{51m-1} \not=0$, then by Lemma \ref{lemma: reduction of multiplicity of points} with $k=-27m-3$, we have $I\big( (25m)^{\times 2},23m-3,(13m-3m)^{\times 2} \big)_{24m-4} \not=0$, which is a contradiction. Hence, $I\big( (25m)^{\times 4},50m,(40m)^{\times 2} \big)_{51m-1} =0$. Combine this with the fact that $I\big((25m)^{\times 8 })\big)_{40m-1} =0$ for all $m$ (since $\ahat(\big(I(1^{\times 8}) \big)) \geqslant \dfrac{8}{5} $ by Lemma \ref{lemma: lower bounds on Waldschmidt constant in P^4}), we get $I\big( (25m)^{\times 12},50m,(40m) \big)_{51m-1} =0$ by Lemma \ref{lemma: adding multiplicities Dumnicki}. Apply Lemma \ref{lemma: adding multiplicities Dumnicki} again, we have $I\big( (25m)^{\times 20},50m \big)_{51m-1} =0$. Lastly, combine this with $I\big( (25m)^{\times 16} \big)_{50m-1} =0$ and apply Lemma \ref{lemma: adding multiplicities Dumnicki} yet again, we get $I\big((25m)^{\times 36 })\big)_{51m-1} =0$ for all $m$.
	\end{enumerate}
\end{proof}

\begin{lemma}\label{lemma: lower bounds on Waldschmidt constant in P^5}
	Let $I(m_1, \dots, m_s)$ be the defining ideal of $s$ generic points with multiplicities, $m_1, \dots, m_s$ in $\PP^5$. The following inequalities hold
	$$\ahat\big( I\big( 2^{\times 3}, 1^{\times 31} \big) \big) \geqslant \dfrac{21}{10}.$$
\end{lemma}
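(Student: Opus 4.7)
The plan is to derive $\ahat(I(2^{\times 3}, 1^{\times 31})) \geqslant \tfrac{21}{10}$ by proving the vanishing
\[
I\big((20m)^{\times 3}, (10m)^{\times 31}\big)_{21m-1} = 0 \qquad \text{for all } m \geqslant 1,
\]
since the $10m$-th symbolic power of $J := I(2^{\times 3}, 1^{\times 31})$ is exactly the fat-point ideal on the left. This vanishing would force $\alpha(J^{(10m)}) \geqslant 21m$, whence $\ahat(J) = \lim_m \alpha(J^{(m)})/m \geqslant \tfrac{21}{10}$. I would follow the template of Lemma~\ref{lemma: lower bounds on Waldschmidt constant in P^4}(2): suppose the ideal is nonzero in degree $21m-1$ and chase the assumption through Lemma~\ref{lemma: reduction of multiplicity of points} until an impossible nonvanishing emerges. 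A single Cremona reduction turns out to be insufficient here in $\PP^5$, so the argument will chain two successive reductions.

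For the first step, I would pick the $N+1 = 6$ distinguished multiplicities to be the three copies of $20m$ together with three of the thirty-one copies of $10m$. The Cremona shift is
\[
k_1 = 4(21m-1) - (3\cdot 20m + 3\cdot 10m) = -6m-4,
\]
and all new multiplicities are nonnegative for $m \geqslant 1$, so Lemma~\ref{lemma: reduction of multiplicity of points} yields
\[
I\big((14m-4)^{\times 3}, (4m-4)^{\times 3}, (10m)^{\times 28}\big)_{15m-5} \neq 0.
\]

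Next, I would reorder so that the six distinguished multiplicities at the second step are the three copies of $14m-4$ (the current maximum for $m \geqslant 1$) together with three of the remaining twenty-eight copies of $10m$. The shift becomes
\[
k_2 = 4(15m-5) - \big(3(14m-4) + 3\cdot 10m\big) = -12m-8,
\]
so the three $14m-4$ become $2m-12$ while the three selected $10m$ become $-2m-8 < 0$. Invoking the case of Lemma~\ref{lemma: reduction of multiplicity of points} where some $m_i + k$ are negative (handled in the last bullet of its proof by effectively setting those entries to zero), I would conclude
\[
I\big((2m-12)^{\times 3}, (4m-4)^{\times 3}, (10m)^{\times 25}\big)_{3m-13} \neq 0.
\]
This is impossible, since any polynomial vanishing to order $10m$ at even one generic point must have degree at least $10m$, whereas the ambient degree is $3m - 13 < 10m$. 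The main obstacle will be the bookkeeping at each reduction: confirming that the chosen six multiplicities are indeed the largest so that the shift $k$ is negative, and carefully applying the negative-multiplicity case of Lemma~\ref{lemma: reduction of multiplicity of points} at the second step. Once these details are handled, the contradiction is immediate.
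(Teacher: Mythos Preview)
Your proposal is correct and essentially identical to the paper's own proof: the paper also supposes $I((20m)^{\times 3},(10m)^{\times 31})_{21m-1}\neq 0$, applies Lemma~\ref{lemma: reduction of multiplicity of points} twice with exactly your choices of six multiplicities and shifts $k_1=-6m-4$, $k_2=-12m-8$, and arrives at the same contradiction $I((2m-12)^{\times 3},(4m-4)^{\times 3},(10m)^{\times 25})_{3m-13}\neq 0$. Your caution that the six chosen multiplicities must be the largest is unnecessary, since Lemma~\ref{lemma: reduction of multiplicity of points} is stated for an arbitrary choice of $N+1$ multiplicities; otherwise the arguments match line by line.
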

\begin{proof}
	We will again use Lemma \ref{lemma: reduction of multiplicity of points} to get desired lower bounds. Suppose that\\
	$I\big( (20m)^{\times 3}, (10m)^{31} \big)_{21m-1} \neq 0$ and set $k=4(21m-1)-(3\cdot 20m + 3 \cdot 10m)=-6m-4$. By Lemma \ref{lemma: reduction of multiplicity of points}, $I\big((14m-4)^{\times 3}, (4m-4)^{\times3}, (10m)^{28}\big)_{15m-5} \neq 0.$ Now set $k=4(15m-5)-(3 \cdot (14m-4)+3 \cdot 10m)=-12m-8$, then again by Lemma \ref{lemma: reduction of multiplicity of points}, we get
	$I \big((2m-12)^{\times 3}, (4m-4)^{\times 3}, (10m)^{\times 25} \big)_{3m-13}\neq 0,$ which is a contradiction. Hence, 
	$\ahat\big( I\big( 2^{\times 3}, 1^{\times 31} \big) \big) \geqslant \dfrac{21}{10}.$
\end{proof}

\begin{lemma}\label{lemma: lower bounds on Waldschmidt constant in P^N}
	Let $I$  denotes the ideal of $N+4$ generic points in $\PP^N$, where $N$ is an even number and $N\geqslant 6.$ Then 
	$$\ahat \big(I \big) \geqslant  \dfrac{(N+2)(2N-1)+2}{N(2N-1)}.$$
\end{lemma}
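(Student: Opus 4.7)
The plan is to argue by contradiction along the lines of the $\PP^5$ reduction in Lemma~\ref{lemma: lower bounds on Waldschmidt constant in P^5}: two successive applications of Lemma~\ref{lemma: reduction of multiplicity of points} will suffice. First, observe that the stated bound simplifies: since $(N+2)(2N-1)+2 = 2N^2+3N = N(2N+3)$, the claim is equivalent to $\ahat(I) \geq \frac{2N+3}{2N-1}$. Hence it is enough to show
$$I\bigl(((2N-1)m)^{\times (N+4)}\bigr)_{(2N+3)m-1} \;=\; 0 \quad \text{for every } m \geq 1,$$
and the Waldschmidt bound will follow by taking the limit along the subsequence $m' = (2N-1)m$.

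Assume for contradiction that this ideal is nonzero for some $m$, and set $d = (2N+3)m - 1$. Apply Lemma~\ref{lemma: reduction of multiplicity of points} to any $N+1$ of the $N+4$ points; the shift is $k_1 = (N-1)d - (N+1)(2N-1)m = -2m - (N-1)$, producing
$$I\bigl(((2N-3)m-(N-1))^{\times (N+1)},\,((2N-1)m)^{\times 3}\bigr)_{(2N+1)m-N} \neq 0.$$
For the second application, select the $3$ remaining large-multiplicity points together with $N-2$ (out of the $N+1$) freshly-reduced points. A direct bookkeeping gives $k_2 = (N-1)((2N+1)m-N) - \bigl[3(2N-1)m + (N-2)((2N-3)m-(N-1))\bigr] = -4m - 2(N-1)$ and new degree $d_2 = (2N-3)m - (3N-2)$, while leaving $3$ untouched points still carrying multiplicity $(2N-3)m - (N-1)$.

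The contradiction is then immediate: since $3N-2 > N-1$, we have $d_2 < (2N-3)m - (N-1)$, and this right-hand side is strictly positive for all $m\geq 1$ and $N \geq 2$. Therefore no form of degree $d_2$ can vanish to order $(2N-3)m-(N-1)$ at any of the $3$ untouched points (and when $d_2 < 0$ the contradiction is even cleaner, since no forms of negative degree exist). This forces $I(((2N-1)m)^{\times(N+4)})_{(2N+3)m-1}=0$ for every $m$, hence $\ahat(I) \geq \frac{2N+3}{2N-1} = \frac{(N+2)(2N-1)+2}{N(2N-1)}$.

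The main obstacle is the choice of the second $N+1$-subset. A naive choice that touches only step-$1$ modified points fails to shrink the degree enough; including \emph{all three} surviving large-multiplicity points in the second Cremona is what drives the degree far below a still-positive multiplicity, and at the same time preserves $3$ "small" multiplicities strictly above $d_2$. Apart from this creative selection, the argument is a mechanical chase through Lemma~\ref{lemma: reduction of multiplicity of points}, exactly analogous to the two-step Cremona used for $\PP^5$ in Lemma~\ref{lemma: lower bounds on Waldschmidt constant in P^5}.
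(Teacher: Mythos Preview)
Your proof is correct and follows the same two-step Cremona strategy as the paper: assume nonvanishing, apply Lemma~\ref{lemma: reduction of multiplicity of points} twice, and reach a multiplicity-exceeds-degree contradiction. The only cosmetic differences are your simpler parameterization (multiplicity $(2N-1)m$ rather than the paper's $\tfrac{N(2N-1)}{2}m$, which incidentally never uses that $N$ is even) and your choice of the second $(N{+}1)$-subset: the paper picks $N-1$ reduced points plus $2$ of the three large ones (so the lone surviving large point with multiplicity $q_1$ exceeds $p_3=q_1-(2N-1)$), while you pick all $3$ large points plus $N-2$ reduced ones (so the three untouched reduced points exceed $d_2$).
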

\begin{proof}
	Let $q_1=\dfrac{mN(2N-1)}{2}$, and $p_1=\dfrac{m(N+2)(2N-1)}{2}+m-1$. Suppose that $$I\big( q_1 ^{\times(N+4)}  \big) _{p_1} \neq 0.$$
	Set $k_1=(N-1)p_1-(N+1)q_1=-mN-(N-1)$. 
	By Lemma \ref{lemma: reduction of multiplicity of points}, 
	$ I\big( ( q_2^{\times(N+1)},q_1^{\times 3}   \big) _{p_2} \neq 0,$
	where, $$q_2=q_1-mN-(N-1)= \dfrac{N(2N-3)m}{2}-(N-1), \text{ and } p_2=p_1-mN-(N-1)=\dfrac{N(2N+1)m}{2}-N.$$	
	Now, $k_2=(N-1)p_2-\big( (N-1)q_2+2q_1 \big)=-mN-(N-1).$ Applying Lemma \ref{lemma: reduction of multiplicity of points}, 
	$$I \big( q_3^{ \times(N-1)}, q_2^{\times 2}, q_4^{\times 2}, q_1  \big)_{p_3} \neq 0 $$
	where, $ q_3 = q_2-mN-(N-1), q_4=q_1-mN-(N-1)$, and $p_3=p_2-mN-(N-1)=\dfrac{mN(2N-1)}{2}-(2N-1)$, which is a contradiction as $q_1>p_3$. Thus we get, 
	$$ \ahat \big(I \big) \geqslant \dfrac{(N+2)(2N-1)+2}{N(2N-1)}. $$
\end{proof}

\section{Lower Bound for Waldschmidt Constant} \label{section: lowerboundWaldschmidt}
In this section, we show the key inequality $\ahat(I) > \frac{\reg(I)+N-1}{N}$ where $I$ is a defining ideal of any number of generic points in $\PP^N$, which is the crucial point in the proof of Stable Harbourne-Huneke Containment. We will combine the results in \cite{bghn2021chudnovskys} for sufficiently many points, the reduction process in section \ref{sec.reduction}, and the bounds on Waldschmidt constant of defining ideals of a small number of points given in Lemma \ref{lemma: lower bounds on Waldschmidt constant in P^4}, Lemma \ref{lemma: lower bounds on Waldschmidt constant in P^5}, and Lemma \ref{lemma: lower bounds on Waldschmidt constant in P^N} to obtain the needed bound on the Waldschmidt constant. \par
\vspace{0.5em}
First, notice that if the number of generic points $s$ satisfies ${N+\ell-1 \choose N} < s \leqslant {N+\ell \choose N}$, then it is well-known that by \cite[Lemma 5.8]{MN2001} and \cite[Corollary 1.6]{GM1984}, we have $\reg(I) = \ell+1$. Therefore, the inequality is equivalent to $\ahat(I) > \frac{d+1}{N},$ where ${d \choose N} < s \leqslant {d+1 \choose N}$ for all $d\geqslant N-1$. Note also that since we are interested in the case when $s\geqslant N+4$, we can assume that $d \geqslant N$. Finally, the inequality was proved for sufficiently many generic points in $\PP^N$ in \cite{bghn2021chudnovskys}, in particular, for at least $3^N$ general points when $N \geqslant 4$, and for at least $2^N$ general points when $N \geqslant 9$. The following lemma shows the inequality for all unknown cases in $\PP^4$.

\begin{lemma}\label{lemma: lowers bounds on Waldschmidt in P4 for 8<s< 81 }
	Let $I$  be the defining ideal of $s$ generic points in $\PP^4$, and  $ 8 \leqslant s \leqslant 81$. Then $\ahat\big( I ( 1^{ \times s } ) \big) > \dfrac{d+1}{4}  $, whenever ${d \choose 4} < s \leqslant {d+1 \choose 4}$. 
\end{lemma}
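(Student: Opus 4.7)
My plan is to split the range $8 \leqslant s \leqslant 81$ into four subranges according to the value of $d$ in the hypothesis $\binom{d}{4} < s \leqslant \binom{d+1}{4}$, and for each subrange produce a lower bound on $\ahat(I(1^{\times s}))$ that strictly exceeds $(d+1)/4$. Computing the binomial coefficients $\binom{d}{4}$ for $d=4,5,6,7,8,9$ gives the boundary values $1,5,15,35,70,126$, so the four subranges are $8\leqslant s\leqslant 15$ (need $\ahat>3/2$), $16\leqslant s\leqslant 35$ (need $\ahat>7/4$), $36\leqslant s\leqslant 70$ (need $\ahat>2$), and $71\leqslant s\leqslant 81$ (need $\ahat>9/4$). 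Throughout, I will use the monotonicity $\ahat(s)\geqslant \ahat(k)$ for $s\geqslant k$ from Lemma \ref{lemma: known inequalities of Waldschmidt constant}(1) to reduce each subrange to the smallest value of $s$ in it.

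The first three subranges follow almost immediately from material already assembled. For $8\leqslant s\leqslant 15$, Lemma \ref{lemma: lower bounds on Waldschmidt constant in P^4}(1) gives $\ahat(s)\geqslant \ahat(8)\geqslant 8/5>3/2$. For $16\leqslant s\leqslant 35$, the perfect power case $\ahat(I(1^{\times 2^4}))=2$ from Lemma \ref{lemma: known inequalities of Waldschmidt constant}(3) gives $\ahat(s)\geqslant 2>7/4$. For $36\leqslant s\leqslant 70$, Lemma \ref{lemma: lower bounds on Waldschmidt constant in P^4}(3) provides $\ahat(s)\geqslant\ahat(36)\geqslant 51/25>2$.

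The remaining subrange $71\leqslant s\leqslant 81$ is the genuine obstacle, since we need a bound that beats $9/4=2.25$, and neither the trivial $\ahat(81)=3$ at the upper endpoint nor the $51/25$ bound suffices at $s=71$. Here the reduction process of Section \ref{sec.reduction} is exactly what is needed. Observing that $71=4\cdot 2^4+7$, I will apply Proposition \ref{proposition: 2^N reduces to 2} with $b=4$, $N=4$, and $\overline{m}=(1^{\times 7})$ to obtain
\[
\ahat\bigl(I(1^{\times 71})\bigr)=\ahat\bigl(I(1^{\times 4\cdot 2^4},1^{\times 7})\bigr)\;\geqslant\; \ahat\bigl(I(2^{\times 4},1^{\times 7})\bigr).
\]
Combining this with the bound $\ahat(I(2^{\times 4},1^{\times 7}))\geqslant 23/10$ from Lemma \ref{lemma: lower bounds on Waldschmidt constant in P^4}(2) yields $\ahat(71)\geqslant 23/10>9/4$, and by monotonicity the same bound holds for all $71\leqslant s\leqslant 81$.

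Combining the four cases gives the lemma. The main difficulty is conceptually in the last range: the naive monotonic bounds (e.g. via $\ahat(2^4)=2$ or $\ahat(3^4)=3$) are not sharp enough near $s=71$, and one must use the Cremona-based reduction of Proposition \ref{proposition: 2^N reduces to 2} to convert $64$ simple points into $4$ double points, where the bookkeeping bound in Lemma \ref{lemma: lower bounds on Waldschmidt constant in P^4}(2) is available. The rest is a routine arithmetic comparison between the four explicit lower bounds $8/5,2,51/25,23/10$ and the corresponding targets $3/2,7/4,2,9/4$.
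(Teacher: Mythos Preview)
Your proof is correct and follows essentially the same approach as the paper: both split into the four cases $d=5,6,7,8$, reduce by monotonicity to the minimal $s$ in each range, and invoke Lemma~\ref{lemma: lower bounds on Waldschmidt constant in P^4}(1) for $\ahat(8)\geqslant 8/5$, Lemma~\ref{lemma: known inequalities of Waldschmidt constant}(3) for $\ahat(16)=2$, Lemma~\ref{lemma: lower bounds on Waldschmidt constant in P^4}(3) for $\ahat(36)\geqslant 51/25$, and the combination of Proposition~\ref{proposition: 2^N reduces to 2} with Lemma~\ref{lemma: lower bounds on Waldschmidt constant in P^4}(2) for $\ahat(71)\geqslant 23/10$. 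The only difference is the order in which the cases are presented.
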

\begin{proof}
	We divide into different cases. Since $ s \leqslant 81$ we start with $  s \leqslant  {9 \choose 4}$ and proceed. 
	\begin{enumerate}
		\item When $d=8$,  
		then by using Lemma \ref{lemma: known inequalities of Waldschmidt constant}, Proposition \ref{proposition: 2^N reduces to 2}, and Lemma \ref{lemma: lower bounds on Waldschmidt constant in P^4}, we get $$\ahat\big(I(1^{\times s })\big) \geqslant \ahat\big(I(1^{\times 71})\big) \geqslant  \ahat\big(I(1^{\times (16
			\cdot 4 +7) })\big) \geqslant  \ahat\big(I(2^{\times 4}, 1^{\times 7 })\big) \geqslant 23/10 > 9/4.$$
		
		\item  When $d=7$, then by Lemma \ref{lemma: known inequalities of Waldschmidt constant}, and Lemma \ref{lemma: lower bounds on Waldschmidt constant in P^4}, 
		$\ahat\big(I(1^{\times s })\big) \geqslant \ahat\big(I(1^{\times 36})\big) \geqslant 51/25 > 8/4.$
		
		\item When $d=6$, 
		then by using Lemma \ref{lemma: known inequalities of Waldschmidt constant} we get 
		$\ahat\big(I(1^{\times s })\big) \geqslant \ahat\big(I(1^{\times 16})\big) \geqslant  2 > 7/4.$
		
		\item When $ 8 \leqslant  s \leqslant  {6 \choose 4}$,
		then Lemma \ref{lemma: known inequalities of Waldschmidt constant} and \ref{lemma: lower bounds on Waldschmidt constant in P^4} we get 
		$\ahat\big(I(1^{\times s })\big) \geqslant \ahat\big(I(1^{\times 8})\big) \geqslant  8/5 > 6/4.$
	\end{enumerate}
\end{proof}
From now on, we only work with $N\geqslant 5$. The next lemma reduces the number of points to at least $2^N$ for all $N\geqslant 5$.
\begin{lemma} \label{lemma: lowers bounds on Waldschmidt in P5678 for 2^N<s< 3^N}
	Let $I$ be the defining ideal of $s$ generic points in $\PP^N$, where $2^N \leqslant s \leqslant 3^N$, and  $N=5,6,7$, and $8$. Then $\ahat\big( I ( 1^{ \times s } ) \big) > \dfrac{d+1}{N}  $ whenever ${d \choose N} < s \leqslant {d+1 \choose N}$. 
\end{lemma}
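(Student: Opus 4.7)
The plan is to mirror the case-by-case strategy of Lemma \ref{lemma: lowers bounds on Waldschmidt in P4 for 8<s< 81 } dimension by dimension. For each $N \in \{5,6,7,8\}$, a short binomial computation (using $3^N \leqslant \binom{2N}{N}$, which is direct) shows that the values of $d$ for which $\binom{d}{N} < s \leqslant \binom{d+1}{N}$ can occur with $2^N \leqslant s \leqslant 3^N$ form a short finite list whose largest element is exactly $d = 2N-1$. The task therefore reduces to producing, for each such pair $(N,d)$, a lower bound on $\ahat(I(1^{\times s}))$ that strictly exceeds $(d+1)/N$.

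All but one value of $d$ in each dimension are handled by a single observation: since $s \geqslant 2^N$, Lemma \ref{lemma: known inequalities of Waldschmidt constant} immediately yields $\ahat(I(1^{\times s})) \geqslant \ahat(I(1^{\times 2^N})) = 2$, and this already strictly beats $(d+1)/N$ whenever $d \leqslant 2N-2$. The entire difficulty therefore concentrates in the single boundary case $d = 2N-1$, where $(d+1)/N = 2$ exactly and a genuine improvement on the bound $\ahat \geqslant 2$ is needed.

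To treat this hard case I will split on $N$. For $N = 5$, the smallest relevant value is $s = 127$, and the decomposition $127 = 3 \cdot 2^5 + 31$ lets me apply Proposition \ref{proposition: 2^N reduces to 2} to obtain $\ahat(I(1^{\times 127})) \geqslant \ahat(I(2^{\times 3}, 1^{\times 31}))$; Lemma \ref{lemma: lower bounds on Waldschmidt constant in P^5} then bounds the right side below by $21/10 > 2$. For $N \in \{6,7,8\}$, a different reduction is cleaner: applying Corollary \ref{corollary: 2^k comes outside} with $k=1$ and $b = N+1$, combined with the estimate $\ahat(I(1^{\times (N+1)})) \geqslant (N+1)/N$ from Proposition \ref{proposition: known bounds on Wladschmidt constant upto N+3 points}, gives $\ahat(I(1^{\times s})) \geqslant 2\,\ahat(I(1^{\times (N+1)})) \geqslant 2(N+1)/N > 2$ whenever $s \geqslant (N+1) \cdot 2^N$. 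The remaining (bookkeeping) obstacle is to verify that $(N+1) \cdot 2^N \leqslant \binom{2N-1}{N}+1$ for $N=6,7,8$, which is an easy numerical check (for instance $7 \cdot 64 = 448 \leqslant 463$ when $N=6$), ensuring that the reduction covers the full range $s \geqslant \binom{2N-1}{N}+1$.
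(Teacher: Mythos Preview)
Your proof is correct and follows essentially the same strategy as the paper's: both split each $N\in\{5,6,7,8\}$ into the easy range $d\leqslant 2N-2$, where $\ahat\geqslant 2$ already suffices, and the boundary case $d=2N-1$, where for $N=5$ one invokes Proposition \ref{proposition: 2^N reduces to 2} together with Lemma \ref{lemma: lower bounds on Waldschmidt constant in P^5}, and for $N=6,7,8$ one uses the reduction of Theorem \ref{theorem: reduction on Waldschmidt consant}/Corollary \ref{corollary: 2^k comes outside} with $b=N+1$ and Proposition \ref{proposition: known bounds on Wladschmidt constant upto N+3 points}. Your write-up is in fact slightly cleaner: you correctly use $127=3\cdot 32+31$ (the paper writes $126$), and you observe that $3^N\leqslant\binom{2N}{N}$ bounds $d$ by $2N-1$, making the paper's separate treatment of $d=16$ when $N=8$ unnecessary.
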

\begin{proof}
	We prove individually for $N=5, 6, 7$, and $8$ by dividing into sub-cases and proving them. 
	\begin{enumerate}
		\item Consider $N=5$ and $2^5 \leqslant s \leqslant 3^5$. Since $s\leqslant 3^5$, then  $s \leqslant {10 \choose 5}$. Now we study case by case:
		\begin{enumerate}
			\item If ${9 \choose 5} < s \leqslant {10 \choose 5}$, then by Lemma \ref{lemma: known inequalities of Waldschmidt constant}, Proposition \ref{proposition: 2^N reduces to 2}, and Lemma \ref{lemma: lower bounds on Waldschmidt constant in P^5}
			$$\ahat(I(1^{\times s})) \geqslant \ahat \big(I (1^{\times 126}) \big) = \ahat \big(I \big(1^{\times(3 \times 32+31) }) \big)\geqslant \ahat\big(I(2 ^{\times 3}, 1^{\times 31} \big) \big)\geqslant \dfrac{21}{10}>\dfrac{10}{5} . $$
			
			\item If $d=7\text{ or }8$ with $s\geqslant 32$, by Lemma  \ref{lemma: known inequalities of Waldschmidt constant}, $\ahat(I(1^{\times s}))   \geqslant  \ahat \big(I \big(1^{\times 32} )\big)= 2  > \dfrac{d+1}{5}.$
			
			
		\end{enumerate}
		
		\item Consider $N=6$ and $2^6 \leqslant s \leqslant 3^6$. Since $s\leqslant 3^6$, then  $s \leqslant {12 \choose 6}$.
		\begin{enumerate}
			\item If ${11 \choose 6} < s \leqslant {12 \choose 6}$, then using Lemma \ref{lemma: known inequalities of Waldschmidt constant}, Theorem \ref{theorem: reduction on Waldschmidt consant} and Proposition \ref{proposition: known bounds on Wladschmidt constant upto N+3 points} we get:
			$$\ahat(I(1^{\times s})) \geqslant \ahat \big(I (1^{\times  462}) \big) \geqslant  \ahat \big(I \big(1^{\times(7\cdot 64)}\big) \geqslant 2 \ahat \big(I (1 ^{\times 7}) \big) \geqslant  2 \cdot \dfrac{7}{6} > \dfrac{12}{6}.$$
			
			\item If $d= 8, 9, 10$, with $s\geqslant 64$, by Lemma \ref{lemma: known inequalities of Waldschmidt constant}, $\ahat(I(1^{\times s}))   \geqslant  \ahat \big(I \big(1^{\times 64} )\big)= 2  > \dfrac{d+1}{6}.$
			
			
			
		\end{enumerate}	
		
		\item Consider $N=7$ and $2^7 \leqslant s \leqslant 3^7$. Since $s\leqslant 3^7$, then  $s \leqslant {14 \choose 7}$.
		
		\begin{enumerate}
			\item If ${13 \choose 7} < s \leqslant {14 \choose 7}$, then  by Lemma \ref{lemma: known inequalities of Waldschmidt constant}, Theorem \ref{theorem: reduction on Waldschmidt consant}, and Proposition \ref{proposition: known bounds on Wladschmidt constant upto N+3 points}
			$$\ahat(I(1^{\times s})) \geqslant \ahat \big(I (1^{\times  1716}) \big) \geqslant  \ahat \big(I \big(1^{\times(8\cdot 128)} \big) \big)\geqslant 2 \ahat \big(I (1 ^{\times 8}) \big) \geqslant  2 \cdot \dfrac{8}{7} > \dfrac{14}{7}.$$
			
			\item  If $d=10,11,12$ with $s\geqslant 128$, by \ref{lemma: known inequalities of Waldschmidt constant}, $\ahat(I(1^{\times s}))   \geqslant  \ahat \big(I \big(1^{\times 128} )\big)= 2  > \dfrac{d+1}{7}.$
			
			
			
		\end{enumerate}
		
		\item Consider $N=8$ and $2^8 \leqslant s \leqslant 3^8$. Since $s\leqslant 3^8$, then  $s \leqslant {16 \choose 8}$.
		\begin{enumerate}
			\item If $d=15 \text{ or } 16$, then by Lemma \ref{lemma: known inequalities of Waldschmidt constant} and Theorem \ref{theorem: reduction on Waldschmidt consant}
			$$\ahat(I(1^{\times s})) \geqslant \ahat \big(I (1^{\times  6435}) \big) \geqslant  \ahat \big(I \big(1^{\times(9\cdot 256)}\big) \geqslant 2 \ahat \big(I (1 ^{\times 9}) \big) \geqslant  2\cdot \dfrac{9}{8}  > \dfrac{d+1}{8}.$$ 
			\item  If $d=11, 12, 13, 14$, with $s\geqslant 256$, by \ref{lemma: known inequalities of Waldschmidt constant}, 
			$\ahat(I(1^{\times s})) \geqslant \ahat \big(I (1^{\times  256}) \big) =2 > \dfrac{d+1}{8}.$
		\end{enumerate}
	\end{enumerate}
\end{proof}

Since we work with the number of points bounded between binomial numbers, ${d \choose N} < s \leqslant {d+1 \choose N}$, the following numerical lemma allows us to make the assumption that $d \leqslant 2N-2$. 

\begin{lemma}\label{lemma: combinatorialineq}
Let $N\geqslant 3$, then $\displaystyle {2N-1 \choose N} \geqslant 2^N$.    
\end{lemma}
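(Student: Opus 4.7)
The plan is to prove this purely numerical inequality by induction on $N$, starting from the base case $N=3$, where one simply computes $\binom{5}{3} = 10 \geq 8 = 2^3$. Nothing deeper is needed for the base case.

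For the inductive step, the key observation is that the ratio between consecutive terms of the sequence $\binom{2N-1}{N}$ is very easy to control. Specifically, I would compute
$$\frac{\binom{2N+1}{N+1}}{\binom{2N-1}{N}} \;=\; \frac{(2N+1)!\, N!\, (N-1)!}{(2N-1)!\,(N+1)!\,N!} \;=\; \frac{(2N+1)(2N)}{(N+1)\,N} \;=\; \frac{2(2N+1)}{N+1},$$
which is at least $2$ whenever $2N+1 \geq N+1$, i.e., for all $N \geq 0$. So, assuming the inductive hypothesis $\binom{2N-1}{N} \geq 2^N$, we obtain
$$\binom{2N+1}{N+1} \;\geq\; 2\binom{2N-1}{N} \;\geq\; 2 \cdot 2^N \;=\; 2^{N+1},$$
exactly what is needed. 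This closes the induction and completes the proof for all $N \geq 3$.

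Since the entire argument reduces to a single factorial-ratio computation and a check of the $N=3$ case, I do not anticipate any genuine obstacle. As a sanity check, an alternative (non-inductive) route would be to use the classical bound $\binom{2N}{N} \geq \tfrac{4^N}{2N+1}$ together with $\binom{2N-1}{N} = \tfrac{1}{2}\binom{2N}{N}$, which yields the inequality once $2^N \geq 4N+2$; but this fails at $N=3$ and $N=4$, so one would still need to check small cases by hand. The inductive approach is cleaner and avoids any case analysis beyond the single base case.
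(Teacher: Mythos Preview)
Your proof is correct and follows exactly the approach indicated in the paper, which simply states that the result is straightforward by induction. Your ratio computation $\binom{2N+1}{N+1}\big/\binom{2N-1}{N} = \tfrac{2(2N+1)}{N+1} \geq 2$ is the natural way to carry out the inductive step, and the base case $N=3$ is verified directly.
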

\begin{proof}
    The proof is straightforward by induction.
\end{proof}
In the next two lemmas, we show the inequality $\ahat(I) > \frac{\reg(I)+N-1}{N}$ for $s$ generic points where ${d \choose N} < s \leqslant {d+1 \choose N}$ in the cases when $d=N+1$, and $d=N+2$. 
\begin{lemma} \label{lemma: lowers bounds on Waldschmidt in PN for N+4<s< N+2chooseN}
	Let $I$ be the defining ideal of $s$ generic points in $\PP^N$, where $N+4 \leqslant s \leqslant {N+2 \choose N}$, and  $N \geqslant 5$. Then $\ahat\big( I ( 1^{ \times s } ) \big) > \dfrac{N+2}{N}$.  
\end{lemma}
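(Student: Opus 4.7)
The plan is to reduce the lemma, via monotonicity of the Waldschmidt constant in the number of generic points (Lemma \ref{lemma: known inequalities of Waldschmidt constant}(1)), to the already-established sharp lower bounds for very small numbers of points. Since $s \geqslant N+4 > N+3$, we have
$$\ahat(I(1^{\times s})) \;\geqslant\; \ahat(I(1^{\times (N+4)})) \;\geqslant\; \ahat(I(1^{\times (N+3)})),$$
so it suffices to show that one of these lower bounds is \emph{strictly} larger than $\frac{N+2}{N}$. The delicacy is that Proposition \ref{proposition: known bounds on Wladschmidt constant upto N+3 points}(3) for even $N$ only yields the non-strict bound $\ahat(I(1^{\times(N+3)})) \geqslant \frac{N+2}{N}$, so the parity of $N$ must be separated.

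For odd $N \geqslant 5$, I would invoke Proposition \ref{proposition: known bounds on Wladschmidt constant upto N+3 points}(4), which gives
$$\ahat(I(1^{\times s})) \;\geqslant\; \ahat(I(1^{\times(N+3)})) \;\geqslant\; 1 + \frac{2}{N} + \frac{2}{N^3+2N^2-N} \;=\; \frac{N+2}{N} + \frac{2}{N(N^2+2N-1)} \;>\; \frac{N+2}{N}.$$
This case is essentially automatic.

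For even $N \geqslant 6$, bypass the $(N+3)$-point estimate (which is too weak) and apply Lemma \ref{lemma: lower bounds on Waldschmidt constant in P^N} directly to $N+4$ generic points. That lemma, proved earlier by the Cremona reduction in Lemma \ref{lemma: reduction of multiplicity of points}, yields
$$\ahat(I(1^{\times s})) \;\geqslant\; \ahat(I(1^{\times(N+4)})) \;\geqslant\; \frac{(N+2)(2N-1)+2}{N(2N-1)} \;=\; \frac{N+2}{N} + \frac{2}{N(2N-1)} \;>\; \frac{N+2}{N}.$$

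Thus the proof is essentially a bookkeeping assembly of three ingredients: the monotonicity in the number of points, the odd-$N$ bound from Proposition \ref{proposition: known bounds on Wladschmidt constant upto N+3 points}, and the even-$N$ bound from Lemma \ref{lemma: lower bounds on Waldschmidt constant in P^N}. The only real obstacle is the careful split by parity: without Lemma \ref{lemma: lower bounds on Waldschmidt constant in P^N} the even case would fail to give strict inequality, which is precisely why that lemma was tailored to give the extra $\frac{2}{N(2N-1)}$ of slack above $\frac{N+2}{N}$.
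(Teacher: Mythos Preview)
Your proposal is correct and follows essentially the same argument as the paper: split by parity of $N$, use monotonicity (Lemma \ref{lemma: known inequalities of Waldschmidt constant}(1)) to reduce to the minimal number of points, then invoke Proposition \ref{proposition: known bounds on Wladschmidt constant upto N+3 points}(4) for odd $N$ and Lemma \ref{lemma: lower bounds on Waldschmidt constant in P^N} for even $N$. Your added commentary on why the even case requires the sharper Cremona-based bound is accurate and matches the paper's intent.
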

\begin{proof}
	Let $N+4 \leqslant s \leqslant { N+2 \choose N }$, if $N$ is even then by Lemma \ref{lemma: known inequalities of Waldschmidt constant} and Lemma \ref{lemma: lower bounds on Waldschmidt constant in P^N} $$\ahat \big(I \big(1^{\times s} \big) \big) \geqslant \ahat \big(I \big(1^{\times (N+4)} \big) \big) \geqslant \dfrac{(N+2)(2N-1)+2}{N(2N-1)}> \dfrac{N+2}{N}.$$
	Otherwise, if $N$ is odd, then by Lemma \ref{lemma: known inequalities of Waldschmidt constant} and Proposition \ref{proposition: known bounds on Wladschmidt constant upto N+3 points} we have $$\ahat \big(I \big(1^{\times s} \big) \big) \geqslant \ahat \big(I \big(1^{\times (N+3)} \big) > \dfrac{N+2}{N}.$$
	This finishes the proof.
\end{proof}

\begin{lemma} \label{lemma: lowers bounds on Waldschmidt in PN for N+2chooseN<s< N+3chooseN}
	Let $I$ be the defining ideal of $s$ generic points in $\PP^N$, where ${N+2 \choose N} \leqslant s \leqslant {N+3 \choose N}$, and  $N \geqslant 5$. Then $\ahat\big( I ( 1^{ \times s } ) \big) > \dfrac{N+3}{N}$.  
\end{lemma}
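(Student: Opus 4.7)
The plan mirrors the proof of Lemma \ref{lemma: lowers bounds on Waldschmidt in PN for N+4<s< N+2chooseN}: combine the monotonicity of the Waldschmidt constant in the number of generic points (Lemma \ref{lemma: known inequalities of Waldschmidt constant}(1)) with the reductions of Theorem \ref{theorem: reduction on Waldschmidt consant} and Proposition \ref{proposition: 2^N reduces to 2}, supplemented by explicit Cremona bounds via Lemma \ref{lemma: reduction of multiplicity of points}.

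First I would dispose of the easy range $s \geq 2^N$: by Lemma \ref{lemma: known inequalities of Waldschmidt constant}(3), $\ahat(I(1^{\times s})) \geq \ahat(I(1^{\times 2^N})) = 2$, and $2 > (N+3)/N$ for every $N \geq 4$. This settles all $s$ in the range that also satisfy $s \geq 2^N$.

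The more delicate range is $\binom{N+2}{N} < s < 2^N$, and I would split by the size of $N$. For $5 \leq N \leq 8$, I would follow the template of Lemma \ref{lemma: lowers bounds on Waldschmidt in P5678 for 2^N<s< 3^N}: use monotonicity to enlarge $s$ to a convenient value of the form $b \cdot 2^N + s'$ inside the range, apply Proposition \ref{proposition: 2^N reduces to 2} to replace $b \cdot 2^N$ multiplicity-$1$ points by $b$ multiplicity-$2$ points, and finish with the explicit base bounds of Lemma \ref{lemma: lower bounds on Waldschmidt constant in P^4}(2), Lemma \ref{lemma: lower bounds on Waldschmidt constant in P^5}, or Lemma \ref{lemma: lower bounds on Waldschmidt constant in P^N}. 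For $N \geq 9$, the interval $[\binom{N+2}{N}+1, \binom{N+3}{N}]$ lies strictly below $2^N$, so Proposition \ref{proposition: 2^N reduces to 2} cannot be invoked. Instead I would argue directly from Lemma \ref{lemma: reduction of multiplicity of points}: assume $I((qm)^{\times s_0})_{pm-1} \neq 0$ for suitably chosen positive integers $q < p$ with $p/q > (N+3)/N$ and each $m \geq 1$, and iterate a chain of $(N+1)$-point Cremona reductions --- in the style of the three-step reduction in the proof of Lemma \ref{lemma: lower bounds on Waldschmidt constant in P^N} --- until the reduced system has strictly negative degree, which forces it to vanish. Passing to the infimum over $m$ yields $\ahat(I(1^{\times s_0})) \geq p/q > (N+3)/N$, and monotonicity propagates the bound to all $s$ in the interval.

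The main obstacle is the calibration in the large-$N$ case. Because the margin $p/q - (N+3)/N$ must be positive but tends to zero as $N$ grows, the pair $(q,p)$ and the sequence of Cremona operations must be chosen so that the first transformation produces a strictly negative $k$, every intermediate multiplicity remains non-negative along the chain, and the final degree drops below zero. Tracking this combinatorial bookkeeping, analogous to the reduction performed in the proof of Lemma \ref{lemma: lower bounds on Waldschmidt constant in P^N}, is the technical heart of the argument.
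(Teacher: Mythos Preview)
Your outline has two genuine gaps that prevent it from going through.

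First, the plan for $5\leqslant N\leqslant 8$ does not work. Monotonicity (Lemma \ref{lemma: known inequalities of Waldschmidt constant}(1)) lets you \emph{decrease} the number of points, not enlarge it: to bound $\ahat(s)$ below you must bound $\ahat(s_0)$ for some $s_0\leqslant s$. The smallest value in the range is $s_0=\binom{N+2}{N}+1$, which equals $22,29,37,46$ for $N=5,6,7,8$ respectively, all strictly less than $2^N$. So Proposition \ref{proposition: 2^N reduces to 2} never applies at the bottom of the interval, and the base bounds you cite are either in the wrong ambient space (Lemma \ref{lemma: lower bounds on Waldschmidt constant in P^4} is for $\PP^4$) or too weak: e.g.\ for $N=6$, Lemma \ref{lemma: lower bounds on Waldschmidt constant in P^N} gives only $\ahat(10)\geqslant 90/66\approx 1.36$, far below the needed $9/6=1.5$.

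Second, for $N\geqslant 9$ you correctly identify that a direct Cremona argument is needed, but you stop exactly where the work begins: you do not propose $p,q$, nor a reduction chain, and you yourself flag the calibration as the ``technical heart.'' A three-step equal-multiplicity Cremona in the style of Lemma \ref{lemma: lower bounds on Waldschmidt constant in P^N} will not close the gap here, because at $s_0=\binom{N+2}{N}+1$ points the system is too large relative to $N+1$ for a short uniform chain to terminate with negative degree.

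The paper's proof supplies the missing idea. Rather than reducing via $2^N$-blocks, it uses Lemma \ref{lemma: adding multiplicities Dumnicki} together with the known bound $\ahat(N+2)\geqslant (N+2)/N$ to collapse each block of $N+2$ equal-multiplicity points into a single point of higher multiplicity. Concretely, one writes $s_0=x+(N+2)y$ and reduces to showing that a mixed system $I\big((Nam)^{\times x},((N+2)am)^{\times y}\big)$ is empty in degree $[(N+3)a+1]m-1$; this system now involves only $x+y$ points (roughly $\tfrac{3N}{2}$ of them), and two Cremona steps via Lemma \ref{lemma: reduction of multiplicity of points} finish it off for an explicit choice of $x,y,a$ depending on the parity of $N$. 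This block-collapsing trick is what makes the calibration tractable for all $N\geqslant 5$ simultaneously, and it is absent from your outline.
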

\begin{proof}
It is enough to show the inequality for $s={N+2 \choose N}+1$ points. We make use of the fact that $\ahat({1^{\times N+2}})\geqslant \frac{N+2}{N}$, hence, $$\displaystyle I((Nam)^{\times N+2})_{(N+2)am-1} = 0, \text{  for all  } a,m,$$
to show that there exist $x,y$, and $a$ (for each $N$) such that $x+(N+2)y={N+2 \choose N}+1$, and 
$$\displaystyle I\left((Nam)^{\times x},((N+2)am)^{\times y}\right)_{[(N+3)a+1]m-1} = 0, \forall m.$$

In fact, once we prove the above claim, by applying the Lemma \ref{lemma: adding multiplicities Dumnicki} $y$ times, we get
$$ \displaystyle I((Nam)^{\times s})_{[(N+3)a+1]m-1} = 0,\forall m,$$
and therefore, $\ahat({1^{\times s}}) \geqslant \dfrac{(N+3)a+1}{Na} > \dfrac{N+3}{N}$. We prove the claim in two cases as follows. \par
\vspace{0.5em}

\textbf{Case 1: When $N$ is odd}
\begin{claim}
Take $x=N+3$, $y=\dfrac{N-1}{2}$, and any $a\geqslant \dfrac{N^2-1}{2(N-2)}$.
\end{claim}

\begin{proof}
We need to show that 
$$\displaystyle I\left((Nam)^{\times N+3},((N+2)am)^{\times \frac{N-1}{2}}\right)_{[(N+3)a+1]m-1} = 0, \forall m.$$

Applying Lemma \ref{lemma: reduction of multiplicity of points} with
\begin{align*}
    k&= (N-1)[(N+3)a+1]m-(N-1)- \left(\dfrac{N-1}{2}(N+2)am+\dfrac{N+3}{2}Nam\right) \\
    &= \left((N-1)-2a\right) m-(N-1),
\end{align*}
(From here, we need $a\geqslant \frac{N+1}{2}$.)

for all $m$, we reduce to
$$I\left(A_1^{\times \frac{N+3}{2}},B_1^{\times \frac{N+3}{2}},C_1^{\times \frac{N-1}{2}}\right)_{D_1},$$
where $A_1 = Nam, B_1= [(N-2)a+N-1]m-(N-1)$, $C_1=(Na+N-1)m-(N-1) $, and $D_1=[(N+1)a+N]m-N$.\par
\vspace{0.5em}

Applying Lemma \ref{lemma: reduction of multiplicity of points} again with
\begin{align*}
    k&= (N-1)D_1- \dfrac{N-1}{2}C_1 -\dfrac{N+3}{2}A_1 \\
    &= \left(\dfrac{N^2-1}{2}-(N+1)a\right) m- \dfrac{N^2-1}{2},
\end{align*}

for all $m$, we reduce to

$$I\left( A_2^{\times \frac{N+3}{2}},B_2^{\times \frac{N+3}{2}}, C_2^{\times \frac{N-1}{2}}\right)_{D_2},$$

where $A_2=(\dfrac{N^2-1}{2}-a)m- \dfrac{N^2-1}{2},$ $B_2=[(N-2)a+N-1]m-(N-1) $,\\ $C_2=(\dfrac{N^2-1}{2}+N-1-a)m- \dfrac{N^2-1}{2}-(N-1)$, and $D_2=(\dfrac{N^2-1}{2}+N)m-\dfrac{N^2-1}{2}-N$.\par
\vspace{0.5em}
Therefore, if we choose any $a\geqslant \dfrac{N^2-1}{2(N-2)}$, we have 
$$(N-2)a+(N-1) \geqslant \dfrac{N^2-1}{2}+N , \text{  hence,  } B_2>D_2,$$
thus, $I\left( A_2^{\times \frac{N+3}{2}},B_2^{\times \frac{N+3}{2}}, C_2^{\times \frac{N-1}{2}}\right)_{D_2} = 0$ for all $m$.

\end{proof}

\textbf{Case 2: When $N$ is even}

\begin{claim}
Take $x=\dfrac{3N}{2}+4$, $y=\dfrac{N-2}{2}$, and any $a\geqslant \dfrac{N+(N-1)(\frac{N}{2}+1)}{\frac{N}{2}-1}$.
\end{claim}

\begin{proof}
We need to show that 
$$\displaystyle I\left((Nam)^{\times \frac{3N}{2}+4},((N+2)am)^{\times \frac{N-2}{2}}\right)_{[(N+3)a+1]m-1} = 0, \forall m.$$

Applying Lemma \ref{lemma: reduction of multiplicity of points} with
\begin{align*}
    k&= (N-1)[(N+3)a+1]m-(N-1)- \left(\dfrac{N-2}{2}(N+2)am+\dfrac{N+4}{2}Nam\right) \\
    &= \left((N-1)-a\right) m-(N-1),
\end{align*}
(From here, we need $a\geqslant N$.)

for all $m$, we reduce to
$$I\left(A_1^{\times N+2},B_1^{\times \frac{N}{2}+2},C_1^{\times \frac{N-2}{2}}\right)_{D_1},$$

where $A_1= Nam, B_1= [(N-1)a+N-1]m-(N-1)$, $C_1= [(N+1)a+(N-1)]m-(N-1)$, and $D_1= [(N+2)a+N]m-N$.\par
\vspace{0.5em}

Applying Lemma \ref{lemma: reduction of multiplicity of points} again with
\begin{align*}
    k&= (N-1)D_1- \dfrac{N-2}{2}C_1-\dfrac{N+4}{2}A_1 \\
    &= \left((N-1)(\dfrac{N}{2}+1)-(\dfrac{N}{2}+1)a\right) m- (N-1)(\dfrac{N}{2}+1),
\end{align*}

for all $m$, we reduce to

$$I\left( A_2^{\times \frac{N}{2}},B_2^{\times \frac{N}{2}+2}, C_2^{\times \frac{N-2}{2}}, E_2^{\times \frac{N+4}{2}}\right)_{D_2},$$

where $A_2=Nam$ and $D_2=\left( (\dfrac{N}{2}+1)a+N+(N-1)(\dfrac{N}{2}+1) \right) m-N-(N-1)(\dfrac{N}{2}+1)$.\par
\vspace{0.5em}
Therefore, if we choose $a\geqslant \dfrac{N+(N-1)(\frac{N}{2}+1)}{\frac{N}{2}-1}$, we have 
$$Na \geqslant (\dfrac{N}{2}+1)a+N+(N-1)(\dfrac{N}{2}+1), \text{  hence,  } A_2>D_2,$$
thus, $I\left( A_2^{\times \frac{N}{2}},B_2^{\times \frac{N}{2}+2}, C_2^{\times \frac{N-2}{2}}, E_2^{\times \frac{N+4}{2}}\right)_{D_2}=0$ for all $m$. 
\end{proof}
This finishes the proof for the lemma.	
\end{proof}

Before proving the main result of this section, that is, the inequality $\ahat(I) > \frac{\reg(I)+N-1}{N}$ for generic points, we prove Lemma \ref{lemma: inductionlemma}, which is important in the inductive proof of the main Theorem \ref{theorem: lowerboundsforallnumber}. This lemma is a direct application of the results in \cite{localeffectivity}, where the authors studied Waldschmidt decomposition to investigate questions related to local effectivity, Waldschmidt constant, and Demailly's conjecture. We state their result here for our purpose.

\begin{lemma}[Theorem 4.1 \cite{localeffectivity}]\label{lemma: Waldschmidtdecomp}
Denote $\ahat(\PP^N, r)$ the Waldschmidt constant of the ideal of $r$ very general points in $\PP^N$. Let $N\geqslant 2$ and $k\geqslant 1$. Assume that for some integers $r_1,\ldots ,r_{k+1}$ and rational numbers $a_1,\ldots ,a_{k+1}$ we have 
$$\ahat(\PP^{N-1}, r_j) \geqslant a_j, \text{ for all } j=1,\ldots ,k+1,$$
$$k\leqslant a_j \leqslant k+1, \text{ for } j=1,\ldots ,k, \ \  a_1>k, \ \  a_{k+1}\leqslant k+1.$$
Then, $$\ahat(\PP^{N}, r_1+\ldots +r_{k+1}) \geqslant \left(1 - \sum_{j=1}^k \dfrac{1}{a_j} \right)a_{k+1} +k.$$
\end{lemma}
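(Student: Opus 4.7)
The plan is to adapt the classical Waldschmidt decomposition argument: take a minimum-degree form in the symbolic power, factor it by the linear forms defining well-chosen hyperplanes, and push the residue down to each hyperplane, where the inductive hypothesis $\ahat(\PP^{N-1}, r_j) \geqslant a_j$ can be applied.

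Setup and decomposition. Let $I$ be the ideal of $r = r_1 + \cdots + r_{k+1}$ very general points in $\PP^N$, and pick $F \in I^{(m)}$ of minimal degree $d = \alpha(I^{(m)})$. Using upper semi-continuity of $\dim I^{(m)}_d$ on the Chow variety, it suffices to prove the bound after specializing the very general configuration to one in which $r_j$ of the points lie on a hyperplane $H_j \subset \PP^N$, with the $k+1$ hyperplanes in general position and with each cluster of $r_j$ points very general inside $H_j \cong \PP^{N-1}$ (specialization only decreases $\alpha$, hence also decreases $\ahat$). Let $e_j$ be the largest integer with $H_j^{e_j} \mid F$ and write $F = H_1^{e_1} \cdots H_{k+1}^{e_{k+1}} G$ with $H_j \nmid G$. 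Put $\delta = \deg G = d - \sum_i e_i$. For each $j$, the restriction $G|_{H_j}$ is a nonzero form of degree $\delta$ on $H_j$; since the other $H_i$ miss the $r_j$ chosen points, it vanishes to order at least $m - e_j$ at each of those points. When $m - e_j > 0$, the hypothesis $\ahat(\PP^{N-1}, r_j) \geqslant a_j$ gives $\delta \geqslant (m-e_j)\, a_j$, i.e.\ $e_j \geqslant m - \delta/a_j$; combined with $e_j \geqslant 0$ this yields
$$d \;\geqslant\; \delta + \sum_{j=1}^{k+1} \max\Bigl\{0,\; m - \tfrac{\delta}{a_j}\Bigr\} \;=:\; f(\delta).$$

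Optimization. Since $d$ is fixed but $\delta$ was determined by $F$, the bound $d \geqslant \min_{\delta \geqslant 0} f(\delta)$ is what we want. The function $f$ is continuous and piecewise linear with breakpoints at $\delta = m a_j$, and on each segment its slope is $1$ minus a partial sum of the $1/a_j$'s. The assumptions $a_1 > k$ and $k \leqslant a_j \leqslant k+1$ for $j \leqslant k$ force $\sum_{j=1}^k 1/a_j < 1$, so on the interval $[m a_{k+1}, m a_k]$ the slope $1 - \sum_{j=1}^k 1/a_j$ is strictly positive, and segments further to the right have even larger slope. A short case split on whether $\sum_{j=1}^{k+1} 1/a_j$ is $\leqslant 1$ or $> 1$ (handling the left-most segment either way) then shows the minimum of $f$ is attained at $\delta = m a_{k+1}$, producing
$$d \;\geqslant\; m\cdot\Bigl(k + a_{k+1}\Bigl(1 - \sum_{j=1}^k \tfrac{1}{a_j}\Bigr)\Bigr).$$
Dividing by $m$ and letting $m \to \infty$ gives the stated bound on $\ahat(\PP^N, r_1 + \cdots + r_{k+1})$.

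Main obstacle. The analytic portion — the factor-and-restrict step plus the piecewise-linear minimization — is essentially mechanical. The genuinely subtle point is the initial specialization: one must argue that the $r_j$ points sitting on $H_j$ can be taken to be themselves very general within $H_j$, so that the inductive hypothesis on $\PP^{N-1}$ may be applied. This is precisely where the hypothesis ``very general'' (as opposed to merely ``general'') is indispensable, because $\ahat$ is an infimum over the countably many values $\alpha(I^{(m)})/m$, so one needs the configuration on each $H_j$ to avoid a countable union of proper subvarieties in the relevant Hilbert scheme. The rest of the argument then only requires a single semi-continuous degeneration.
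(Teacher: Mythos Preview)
The paper does not actually prove this lemma: it is quoted as Theorem~4.1 of \cite{localeffectivity} and used as a black box, so there is no ``paper's own proof'' to compare against. That said, your outline is precisely the Waldschmidt decomposition argument from \cite{localeffectivity}: specialize the blocks of $r_j$ points onto general hyperplanes $H_j$ by semicontinuity, peel off the maximal powers of $H_j$ from a minimum-degree form in $I^{(m)}$, restrict the cofactor to each $H_j$ and invoke the $\PP^{N-1}$ bounds, then minimize the resulting piecewise-linear function of $\delta$. Your identification of the specialization step as the one genuinely delicate point---ensuring the restricted configurations are themselves very general in $H_j\cong\PP^{N-1}$, which is where the very-general (countably many open conditions) hypothesis is essential---is exactly right.

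One small imprecision: the assertion that the minimum of $f(\delta)$ occurs at $\delta=ma_{k+1}$ is only literally true when $a_{k+1}\leqslant a_j$ for all $j\leqslant k$. The hypotheses do not impose this ordering. However, since the hypotheses force $\sum_{j=1}^{k+1}1/a_j\geqslant 1$ and $\sum_{j=1}^{k}1/a_j<1$, the convex function $f$ still has its minimum at the breakpoint corresponding to the smallest $a_j$, and one checks directly that the resulting value is always at least $m\bigl(k+a_{k+1}(1-\sum_{j=1}^k 1/a_j)\bigr)$, with equality exactly when $a_{k+1}$ is smallest. So the stated bound holds in all cases, but your sketch would be cleaner if you either relabeled so that $a_{k+1}$ is minimal or noted that evaluating at the true minimizer only strengthens the inequality.
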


\begin{lemma}\label{lemma: inductionlemma}
	Assume that $N\geqslant 4$. Suppose that for each $2\leqslant \ell\leqslant  N-1$, the following is true:
	 $$  \text{ if }{N+\ell \choose N}<s \leqslant {N+\ell+1 \choose N} , \text{then for }  s \text{-many very general sets of points } \ahat(\PP^N, s) \geqslant \dfrac{N+\ell+1}{N}.$$  If for each of those $\ell$,  we set  $ s_\ell={N+1+\ell \choose N+1}+1 $, then for $s_\ell$ many generic points we have:
	 $$ \ahat(1^{ \times s_\ell})>\dfrac{N+1+\ell+1}{N+1}.$$
\end{lemma}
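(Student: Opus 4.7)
The strategy is to apply Lemma \ref{lemma: Waldschmidtdecomp} (at ambient dimension $N+1$) with $k=1$, feeding in the hypothesized lower bounds for Waldschmidt constants in $\PP^N$. The game is to split $s_\ell$ as $r_1+r_2$ so that each $r_j$ lies in a range where either the lemma's hypothesis (at some $\ell'\in\{2,\ldots,N-1\}$) or a previously established bound provides a usable estimate for $\ahat(\PP^N, r_j)$.

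\textbf{Executing the split.} Using Pascal's identity I rewrite $s_\ell = \binom{N+\ell}{N+1} + \binom{N+\ell}{N} + 1$ and take
$$r_1 := \binom{N+\ell}{N} + 1, \qquad r_2 := \binom{N+\ell}{N+1}.$$
By inspection $\binom{N+\ell}{N} < r_1 \leqslant \binom{N+\ell+1}{N}$, and $\binom{N+\ell-1}{N} < r_2 \leqslant \binom{N+\ell}{N}$: the lower inequality uses Pascal, $\binom{N+\ell}{N+1} = \binom{N+\ell-1}{N+1} + \binom{N+\ell-1}{N}$, together with $\binom{N+\ell-1}{N+1}\geqslant 1$ (valid for $\ell\geqslant 2$), while the upper uses $\binom{N+\ell}{N+1}/\binom{N+\ell}{N} = \ell/(N+1) < 1$. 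The hypothesis at $\ell$ then provides $a_1 := \frac{N+\ell+1}{N} \leqslant \ahat(\PP^N, r_1)$, and the bound $a_2 := \frac{N+\ell}{N} \leqslant \ahat(\PP^N, r_2)$ comes from the hypothesis at $\ell-1$ when $\ell\geqslant 3$, and from Proposition \ref{proposition: known bounds on Wladschmidt constant upto N+3 points}(2) combined with Lemma \ref{lemma: known inequalities of Waldschmidt constant}(1) when $\ell=2$ (so $r_2 = N+2$).

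\textbf{Plugging in and concluding.} Since $2\leqslant\ell\leqslant N-1$, both $a_1,a_2$ lie in $(1,2]$, so Lemma \ref{lemma: Waldschmidtdecomp} with $k=1$ applies and yields
$$\ahat(\PP^{N+1}, s_\ell)\ \geqslant\ \Bigl(1-\tfrac{1}{a_1}\Bigr)a_2 + 1 \ =\ \frac{(\ell+1)(N+\ell)}{N(N+\ell+1)}+1.$$
Comparing with $\frac{N+\ell+2}{N+1}$, the required strict inequality reduces after subtracting $1$ and clearing denominators to $(N+1)(N+\ell) > N(N+\ell+1)$, i.e., $\ell>0$, which holds. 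Since Waldschmidt constants of generic and very general sets of points agree (generic bounds come from indeterminate specialization and are preserved on any open dense locus), the estimate applies to generic points, as required.

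\textbf{Main obstacle.} The delicate point is the boundary case $\ell=2$, where the decomposition is forced ($r_1 = \binom{N+2}{N}+1$, $r_2 = N+2$) and the bound for $r_2$ lies outside the range covered by the hypothesis; this is handled by invoking Proposition \ref{proposition: known bounds on Wladschmidt constant upto N+3 points}(2). Apart from this edge case, the argument is a routine Pascal-identity verification plus a one-line numeric inequality.
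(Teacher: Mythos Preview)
Your proof is correct and follows essentially the same approach as the paper's: the identical Pascal split $s_\ell = r_1+r_2$ with $r_1=\binom{N+\ell}{N}+1$, $r_2=\binom{N+\ell}{N+1}$, the same application of Lemma~\ref{lemma: Waldschmidtdecomp} with $k=1$, and the same arithmetic check $(N+1)(N+\ell)>N(N+\ell+1)$. You are in fact more explicit than the paper about the $\ell=2$ edge case (citing Proposition~\ref{proposition: known bounds on Wladschmidt constant upto N+3 points}(2) for $r_2=N+2$) and about verifying that $r_2$ lands in the required binomial range.
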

\begin{proof}
\noindent Let us write
\begin{align*}
s_\ell={N+1+\ell \choose N+1}+1 &= {N+\ell \choose N}+1+{N+\ell \choose N+1}\\
                                                               &={N+\ell \choose N}+1+ \dfrac{N+\ell}{N+1}{N+\ell-1 \choose N}\\
                                                               &=r_1+r_2
\end{align*}
where $r_1={N+\ell \choose N}+1 $ and $r_2=\frac{N+\ell}{N+1}{N+\ell-1 \choose N}$. 
For very general sets of points and $\ell \geqslant 2$, \\

$\ahat(\PP^N, r_1) \geqslant a_1=\dfrac{N+\ell+1}{N} =1+\dfrac{\ell+1}{N} $,  \\

$ \ahat(\PP^N, r_2) \geqslant   \ahat(\PP^N, {N+\ell-1 \choose N}+1)  \geqslant a_2= \dfrac{N+\ell-1+1}{N} =1+\dfrac{\ell}{N}$. \\

Since, $\ell \leqslant N-1$, then $1< a_1 \leqslant 2$ and $a_2\leqslant 2$. 
This satisfies the hypothesis of Lemma \ref{lemma: Waldschmidtdecomp} with  $k=1$. Thus for $s_l$ many very general points  in $\PP^{N+1}$, we have 
$$\ahat(\PP^N, r_1+r_2) \geqslant (1-\frac{1}{a_1})a_2+1 =\frac{(\ell+1)(N+\ell)}{(N+\ell+1)N}+1 > 1+\frac{\ell+1}{N+1},$$
which gives that for very general points $$\ahat(\PP^{N+1}, s_\ell) > 1+\frac{\ell+1}{N+1}.$$
Now we know that the Waldschmidt constant of the ideal  generic points is greater than that of very general points. Thus for $s_\ell$ many generic points in $\PP^{N+1}$, we have 
$$\ahat(1^{ \times s_\ell}) > 1+\frac{\ell+1}{N+1} =\frac{N+1+\ell+1}{N+1}.$$
                                          
\end{proof}
We are now ready to prove the main result of this section.
\begin{theorem}\label{theorem: lowerboundsforallnumber}
Let $I$ be the defining ideal of any number of $s$ generic points in $\PP^N$ where $s \geqslant N+4$. Then $$\ahat(I) > \frac{\reg(I)+N-1}{N}.$$    
\end{theorem}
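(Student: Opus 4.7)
The plan is to induct on $N \geqslant 4$, after first rephrasing the target as $\ahat(I) > \frac{d+1}{N}$ where $d$ is the unique integer with $\binom{d}{N} < s \leqslant \binom{d+1}{N}$; the hypothesis $s \geqslant N+4$ forces $d \geqslant N+1$, so writing $\ell = d-N$ we have $\ell \geqslant 1$. I would split the range of $s$ into a large regime $s \geqslant 3^N$, a medium regime $2^N \leqslant s < 3^N$, and a small regime $s < 2^N$. The large regime is already settled in \cite{bghn2021chudnovskys}; the medium regime is handled by Lemma \ref{lemma: lowers bounds on Waldschmidt in P5678 for 2^N<s< 3^N} when $N \in \{5,6,7,8\}$ and again by \cite{bghn2021chudnovskys} when $N \geqslant 9$. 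Combined with Lemma \ref{lemma: lowers bounds on Waldschmidt in P4 for 8<s< 81 }, this completely settles the base case $N=4$ and leaves, for $N \geqslant 5$, only the small regime to address.

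For $s < 2^N$, Lemma \ref{lemma: combinatorialineq} gives $d \leqslant 2N-2$, that is, $\ell \in \{1,2,\ldots,N-2\}$. I would dispatch $\ell = 1$ and $\ell = 2$ by Lemma \ref{lemma: lowers bounds on Waldschmidt in PN for N+4<s< N+2chooseN} and Lemma \ref{lemma: lowers bounds on Waldschmidt in PN for N+2chooseN<s< N+3chooseN} respectively. For the remaining $\ell \in \{3,\ldots,N-2\}$, the strategy is to feed the inductive hypothesis on $\PP^{N-1}$ into Lemma \ref{lemma: inductionlemma} (applied with $N-1$ in place of $N$): the induction gives the desired lower bound on $\ahat$ for every $s \geqslant (N-1)+4$ in $\PP^{N-1}$, and since the generic and very general Waldschmidt constants agree (initial-degree sequences being constant on an open subset of the parameter space), the hypotheses of Lemma \ref{lemma: inductionlemma} are satisfied for every relevant $\ell$. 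Its conclusion then yields $\ahat(1^{\times s_\ell}) > \frac{N+\ell+1}{N}$ at $s_\ell = \binom{N+\ell}{N}+1$ in $\PP^N$, and monotonicity of $\ahat$ in $s$ (Lemma \ref{lemma: known inequalities of Waldschmidt constant}) propagates this bound to every $s$ with $\binom{N+\ell}{N} < s \leqslant \binom{N+\ell+1}{N}$, which is precisely the target for $d = N+\ell$.

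The main obstacle I anticipate is keeping the index bookkeeping tight: Lemma \ref{lemma: inductionlemma} demands the $\PP^{N-1}$ bound for all $\ell$ up to $N-2$, and the endpoint $\ell = N-2$ there corresponds to $s$ in the range $(\binom{2N-3}{N-1},\binom{2N-2}{N-1}]$, which by Lemma \ref{lemma: combinatorialineq} actually lies above $2^{N-1}$ and hence outside the small regime of $\PP^{N-1}$. I would recover this borderline range by invoking \cite{bghn2021chudnovskys} (for $N-1 \geqslant 9$), Lemma \ref{lemma: lowers bounds on Waldschmidt in P5678 for 2^N<s< 3^N} (for $N-1 \in \{5,6,7,8\}$), or Lemma \ref{lemma: lowers bounds on Waldschmidt in P4 for 8<s< 81 } (for $N-1 = 4$) in dimension $N-1$. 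Once the cross-regime coverage is verified, the induction closes and the theorem follows for every $N \geqslant 4$ and every $s \geqslant N+4$.
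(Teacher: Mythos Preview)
Your proposal is correct and follows essentially the same inductive strategy as the paper's proof (you induct from $\PP^{N-1}$ to $\PP^N$, the paper from $\PP^N$ to $\PP^{N+1}$, but this is only a relabeling). Your ``obstacle'' paragraph is unnecessary: the inductive hypothesis is the \emph{full} theorem for $\PP^{N-1}$, which already covers every $s \geqslant N+3$ in every regime, so the borderline range $(\binom{2N-3}{N-1},\binom{2N-2}{N-1}]$ needs no separate recovery.
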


\begin{proof}
We use induction on $N$. The base case $N=4$ follows from results in \cite{bghn2021chudnovskys} and Lemma \ref{lemma: lowers bounds on Waldschmidt in P4 for 8<s< 81 } as mentioned above. Suppose that the inequality holds for $N\geqslant 4$, we want to show the inequality hold for $N+1$. First, note that by combining the results in \cite{bghn2021chudnovskys}, Lemmas \ref{lemma: lowers bounds on Waldschmidt in P5678 for 2^N<s< 3^N}, \ref{lemma: combinatorialineq}, \ref{lemma: lowers bounds on Waldschmidt in PN for N+4<s< N+2chooseN}, and  \ref{lemma: lowers bounds on Waldschmidt in PN for N+2chooseN<s< N+3chooseN} applying to $\PP^{N+1}$, it suffices to prove the inequality
$$\ahat(I) > \frac{\reg(I)+N}{N+1},$$
for $s$ many generic points in $\PP^{N+1}$ where ${N+1+3 \choose N+1}+1 \leqslant s \leqslant {N+1+N+1-1 \choose N+1}$ and $N+1 \geqslant 5$. Moreover, it is enough to show that 
$$ \ahat(1^{ \times s_\ell})>\dfrac{N+1+\ell+1}{N+1},$$
for $ s_\ell={N+1+\ell \choose N+1}+1 $ generic points in $\PP^{N+1}$ where $3\leqslant \ell \leqslant N$. By the inductive hypothesis that the inequality $\ahat(I) > \frac{\reg(I)+N-1}{N}$ is true for any $s$ generic points in $\PP^N$, specializing the points, we have the inequality $\ahat(\PP^N, s) \geqslant \dfrac{N+\ell+1}{N}$ for any $s$ very general sets of points in $\PP^N$ where ${N+\ell \choose N}<s \leqslant {N+\ell+1 \choose N}$ and $2\leqslant \ell\leqslant  N-1$. By Lemma \ref{lemma: inductionlemma}, we finish the induction step.
\end{proof}

\begin{remark}
    As mentioned before, we only work with $N\geqslant 4$ and $s\geqslant N+4$ many points. The above inequality is stronger than Chudnovsky's inequality, hence, by specializing the points, we yield Chudnovsky's conjecture for $s\geqslant N+4$ very general points when $N\geqslant 4$, thus recover the main result in \cite{FMX2018} in this case.
\end{remark}

\section{Stable containment and Chudnovsky's Conjecture for general points}\label{section: Chudnovsky}
In this section, we extend our results in \cite{bghn2021chudnovskys} on stable Harbourne-Huneke Containment and Chudnovsky's conjecture for any numbers of general points in $\PP^N$.

\begin{lemma}\label{lemma: stronger HH for generic points}
	Let $I=I(\z)$ be the defining ideal of a generic sets of any number of points in $\PP^N_{\kk(\z)}$. Then $I$ satisfies the following containment $$ I^{(Nr-N)} \subset \mm_\z^{Nr}I^r, \text{ for } r \gg 0 .$$
\end{lemma}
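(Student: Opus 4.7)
The plan is to deduce this containment from two ingredients already at our disposal: the sharp Waldschmidt-constant estimate $\ahat(I) > (\reg(I) + N - 1)/N$ established in Theorem~\ref{theorem: lowerboundsforallnumber}, and the degree-based containment machinery developed in~\cite{bghn2021chudnovskys}. In that earlier paper precisely this stronger containment was proved for generic points under the additional hypothesis that the Waldschmidt inequality holds; the only obstruction to applying it for every $s$ was that the inequality was known only for $s$ sufficiently large. Theorem~\ref{theorem: lowerboundsforallnumber} removes that obstruction, so the lemma should follow essentially for free by re-running the earlier argument.

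First, I would set $\delta := N\,\ahat(I) - \reg(I) - (N-1)$, a fixed strictly positive real number by Theorem~\ref{theorem: lowerboundsforallnumber}. Using the infimum characterization of the Waldschmidt constant,
$$\alpha\bigl(I^{(Nr - N)}\bigr) \;\geqslant\; (Nr - N)\,\ahat(I) \;=\; (r-1)\bigl(\reg(I) + N - 1 + \delta\bigr),$$
so $\alpha(I^{(Nr-N)})$ grows linearly in $r$ with leading coefficient strictly greater than $\reg(I) + N - 1$. This is the slack that will absorb the $\mm_\z^{Nr}$ correction on the right-hand side.

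Next I would feed this degree estimate into the containment criterion from~\cite{bghn2021chudnovskys}: for a saturated equidimensional ideal of points, once the initial degree of $I^{(c)}$ is sufficiently large with respect to $r\cdot \reg(I)$ plus the intended $\mm$-exponent, a short Castelnuovo--Mumford-regularity argument forces $I^{(c)} \subseteq \mm^{b} I^{r}$. Taking $c = Nr - N$ and $b = Nr$, substituting the lower bound above, and simplifying, the criterion reduces to an inequality of the shape $(r-1)\,\delta \geqslant C(N, \reg(I), \ahat(I))$, which manifestly holds for all $r \gg 0$; the precise threshold $r_0$ depends only on $\delta$, $\reg(I)$, and $N$. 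One then has to verify that the criterion applies verbatim over the transcendental extension $\kk(\z)$: the invariants $\alpha$, $\reg$, $\ahat$ and the saturatedness of $I(\z)$ are all stable under passage to the algebraic closure of $\kk(\z)$, and $I(\z)$ is a radical equidimensional ideal of big height $N$ cut out by the generic points $P_i(\z)$, so no modifications are needed.

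The main obstacle was never in this lemma itself: the delicate part is the strict Waldschmidt inequality for \emph{every} number $s \geqslant N+4$ of generic points, which is exactly Theorem~\ref{theorem: lowerboundsforallnumber}. Once that is in hand, the present statement is a one-paragraph asymptotic degree comparison, identical in spirit to the argument of~\cite[Section~4]{bghn2021chudnovskys}, and the only new observation is that the same comparison now applies unconditionally on $s$.
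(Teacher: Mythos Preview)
Your approach mirrors the paper's proof almost exactly: both use Theorem~\ref{theorem: lowerboundsforallnumber} to obtain the strict inequality $N\ahat(I)>\reg(I)+N-1$, bound $\alpha(I^{(Nr-N)})\geqslant (Nr-N)\,\ahat(I)$ via the infimum definition of $\ahat$, and then invoke the standard regularity-based containment criterion (the paper cites \cite[Lemma~2.3.4]{BoH}, which is the same mechanism you attribute to \cite{bghn2021chudnovskys}).

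One arithmetic caveat worth flagging: the exponent $Nr$ on $\mm_\z$ in the displayed statement is evidently a typo for $r(N-1)$, as the paper's own computation only yields $\alpha(I^{(Nr-N)})\geqslant r\,\reg(I)+r(N-1)$ and the application in Theorem~\ref{theorem: Containment for general points} uses only the exponent $c(N-1)$. Your claimed reduction to an inequality of the shape $(r-1)\,\delta\geqslant C$ is precisely what one gets with the intended exponent $b=r(N-1)$. With your stated choice $b=Nr$, however, the same calculation gives $r(\delta-1)\geqslant C$, which does \emph{not} follow for $r\gg 0$ from $\delta>0$ alone; so either adjust the target exponent to $r(N-1)$ or note explicitly that the lemma as literally stated is slightly stronger than what the argument (yours or the paper's) actually delivers.
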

\begin{proof}
	The proof follows the same pattern as \cite[Lemma 4.7]{bghn2021chudnovskys}, we present it in details for readers' convenience. By Theorem \ref{theorem: lowerboundsforallnumber} we have, $\ahat(I) > \dfrac{\reg(I)+N-1 }{N} $, hence, 
	$N\ahat(I) \geqslant \dfrac{r}{r-1}(\reg(I)+N-1)$, for $r \gg 0$, therefore,  
	$$\alpha(I^{Nr-N}) \geqslant (Nr-N)\ahat(I) \geqslant r(\reg(I)+N-1).$$ 
	Hence by \cite[Lemma 2.3.4]{BoH}, we get, $I^{(Nr-N)} \subset \mm_\z^{Nr}I^r, \text{ for } r \gg 0 $. 
\end{proof}

The following is the main Theorem of this section. 
\begin{theorem}\label{theorem: Containment for general points}
	Let $I$ be the defining ideal of a general set of any $s$ number of points in $\PP^N$. Then, there is a constant $r(s,N)$, depending only on $s$ and $N$, such that the stable containment $I^{(Nr)} \subseteq \mm^{(N-1)r}I^r$ holds when $r \geqslant r(s,N)$. 
\end{theorem}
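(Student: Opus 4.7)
The strategy is to descend from the generic statement in Lemma \ref{lemma: stronger HH for generic points} (which provides a strictly stronger containment than what is needed) to the general case via specialization. First, observe that Lemma \ref{lemma: stronger HH for generic points} already yields the desired containment in the generic setting with slack to spare. Since $Nr \geqslant Nr - N$ forces $I(\z)^{(Nr)} \subseteq I(\z)^{(Nr-N)}$ and since $\mm_\z^{Nr} \subseteq \mm_\z^{(N-1)r}$, one obtains
\[
I(\z)^{(Nr)} \subseteq I(\z)^{(Nr-N)} \subseteq \mm_\z^{Nr} I(\z)^r \subseteq \mm_\z^{(N-1)r} I(\z)^r
\]
for all $r \geqslant r_0(s,N)$. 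Here $r_0(s,N)$ is controlled by the ratio $\frac{\reg(I(\z))+N-1}{N\ahat(I(\z))-(\reg(I(\z))+N-1)}$ inherited from the proof of Lemma \ref{lemma: stronger HH for generic points}, which depends only on $s$ and $N$ because both $\reg(I(\z))$ and $\ahat(I(\z))$ do (the latter by Theorem \ref{theorem: lowerboundsforallnumber}).

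Next, I would apply the Krull-style specialization framework recorded in Remark \ref{rmk.KrullSpecialization}. For each fixed $r \geqslant r_0(s,N)$, there is an open dense $U_r \subseteq \AA^{s(N+1)}_\kk$ such that, for $\a \in U_r$,
\[
\pi_\a\bigl(I(\z)^{(Nr)}\bigr) = I(\a)^{(Nr)} \quad \textrm{and} \quad \pi_\a\bigl(\mm_\z^{(N-1)r} I(\z)^r\bigr) = \mm^{(N-1)r} I(\a)^r.
\]
Applying $\pi_\a$ to the generic containment then yields $I(\a)^{(Nr)} \subseteq \mm^{(N-1)r} I(\a)^r$ for every $\a \in U_r$, and Lemma \ref{lem.Hilbert} then transfers this from $\AA^{s(N+1)}_\kk$ to the Chow variety parametrizing sets of $s$ points.

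The final and most delicate step is to replace the family $\{U_r\}_{r \geqslant r_0}$ by a single open dense subset $U$ on which the containment holds for \emph{all} $r \geqslant r(s,N)$ simultaneously, thereby upgrading the conclusion from "very general" (a countable intersection of the $U_r$) to "general" (one open dense set). The plan is to adapt the specialization machinery developed in \cite{bghn2021chudnovskys}, which is tailored to exploit exactly the kind of extra strength furnished by Lemma \ref{lemma: stronger HH for generic points}: the exponent $Nr$ rather than $(N-1)r$ on the maximal-ideal factor, and the symbolic degree $Nr-N$ rather than $Nr$, provide enough slack that a single specialization parameter $\a$ continues to witness the weaker target containment for every sufficiently large $r$ at once. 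I expect this uniformity step to be the main obstacle, since a direct application of Remark \ref{rmk.KrullSpecialization} only controls one $r$ at a time; the resolution is precisely to use the slack from Lemma \ref{lemma: stronger HH for generic points} as input to the uniform specialization argument from \cite[\S 4]{bghn2021chudnovskys}, which then produces the uniform bound $r(s,N)$ depending only on $s$ and $N$.
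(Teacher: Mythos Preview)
Your proposal lands on the right ingredients and correctly identifies both the obstacle (passing from a countable family $\{U_r\}$ to a single open set) and the tool that resolves it (the machinery of \cite{bghn2021chudnovskys}), so it is essentially correct. However, your route is a detour compared to the paper's argument, and the first two paragraphs work against you rather than for you.

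The paper does \emph{not} first weaken the generic containment and then specialize for each $r$. Instead, it specializes the \emph{stronger} containment $I(\z)^{(Nc-N)} \subseteq \mm_\z^{c(N-1)}I(\z)^c$ at a \emph{single} value $c$, obtaining one open dense set $U$ on which $I(\a)^{(Nc-N)} \subseteq \mm^{c(N-1)}I(\a)^c$ holds. It then invokes \cite[Corollary~3.2 and Remark~3.3]{bghn2021chudnovskys}, a propagation result that upgrades a single containment of this shape to the stable containment $I(\a)^{(Nr-N)} \subseteq \mm^{r(N-1)}I(\a)^r$ for all $r \gg 0$, with the threshold independent of $\a$. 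No countable intersection ever appears.

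In your write-up, the weakening in paragraph~1 discards precisely the slack you later say you need in paragraph~3, and the per-$r$ specialization in paragraph~2 creates the uniformity problem you then have to undo. The cleaner order is: keep the stronger containment from Lemma~\ref{lemma: stronger HH for generic points}, specialize once at a fixed $c$, and only then invoke the propagation lemma from \cite{bghn2021chudnovskys}. That is exactly what your paragraph~3 is gesturing at, but it should replace paragraphs~1--2 rather than follow them.
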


The proof follows the same pattern as \cite[Theorem 4.8]{bghn2021chudnovskys}.  
\begin{proof}
	Let $I(\z)$ be the defining ideal of the set of $s $ generic points in $\PP^N_{\kk(\z)}$. By Lemma \ref{lemma: stronger HH for generic points}, there exists a constant $c \in \NN$ such that
	$$I(\z)^{(Nc-N)} \subseteq \mm_\z^{c(N-1)}I(\z)^c.$$
	By \cite[Satz 2 and 3]{Krull1948}, there exists an open dense subset $U \subseteq \AA^{s(N+1)}$ such that $\forall \a \in U$,
	$$\pi_\a(I(\z))^{(Nc-N)} = I(\a)^{(Nc-N)}, \quad \pi_\a(I(\z)^c) = I(\a)^c \quad \text{ and } \quad \pi_\a(\mm_\z^{c(N-1)}) = \mm^{c(N-1)}.$$
	Thus, for all $\a \in U$, we have
	\begin{align}
		I(\a)^{(Nc-N)} \subseteq \mm^{c(N-1)}I(\a)^c. \label{eq.HaHu110}
	\end{align}
	Note that we can pick $U$ such that for all $\a \in U$, we also have $\alpha(I(\a)) = \alpha(I(\z))$.
	
	Applying \cite[Corollary 3.2, and Remark 3.3]{bghn2021chudnovskys}, we get that, for $\a \in U$ and $r \gg 0$ (independent of $I(\a)$),
	$$I(\a)^{(Nr-N)} \subseteq \mm^{r(N-1)}I(\a)^r.$$ 
\end{proof}
\begin{remark}
	If $I$ is an ideal defining a set of general points, then they do satisfy the following containment 
	$$I^{(Nr-N+1)} \subseteq \mm^{(r-1)(N-1)}I^r, r \gg0 .$$	
	By combining with the results for sufficiently large numbers of points in \cite{bghn2021chudnovskys}, we have completed the proof showing the stable Harbourne-Huneke containment $I^{(Nr)} \subseteq \mm^{(N-1)r}I^r$, the above stronger containment, or in particular, the stable Harbourne containment $I^{(Nr-N+1)} \subseteq I^r, r \gg 0 $ for any set of general points in $\PP^N$. 
\end{remark}

\begin{theorem}\label{theorem: Chudnovsky for small points}
	If $I$ is the defining ideal of a general set of points in $\PP^N_\kk$, then $I$ satisfies Chudnovsky's conjecture, i.e., 
	$$\ahat(I) \geqslant \dfrac{\alpha(I)+N-1}{N}. $$
\end{theorem}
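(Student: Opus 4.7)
The plan is to deduce Chudnovsky's inequality directly from the stable Harbourne--Huneke containment established in Theorem \ref{theorem: Containment for general points}, after separating off the small-cardinality cases that are already known.

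First, I will dispose of the cases that fall outside the scope of Theorem \ref{theorem: lowerboundsforallnumber}. When $N = 2$ or $N = 3$, Chudnovsky's conjecture for any general (in fact, any) set of points is already established in \cite{HaHu} and in \cite{dumnicki2012symbolic, Dumnicki2015}, respectively. For $N \geqslant 4$, the cases $s \leqslant N+3$ are covered by the explicit computations of the Waldschmidt constants of ideals of up to $N+3$ generic points in \cite{brianlinear} (cf.\ Proposition \ref{proposition: known bounds on Wladschmidt constant upto N+3 points}), from which Chudnovsky's bound is immediate; see also \cite{NagelTrokInterpolation}. So I may restrict attention to $N \geqslant 4$ and $s \geqslant N+4$.

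For such $I$, Theorem \ref{theorem: Containment for general points} produces a constant $r_0 = r(s,N)$ such that the stable Harbourne--Huneke containment
$$I^{(Nr)} \subseteq \mm^{(N-1)r} I^r$$
holds for every $r \geqslant r_0$. Taking initial degrees on both sides and using that $\alpha(\mm^{(N-1)r} I^r) = (N-1)r + r\,\alpha(I)$, I obtain
$$\alpha\!\left(I^{(Nr)}\right) \;\geqslant\; r\bigl(\alpha(I) + N - 1\bigr) \qquad \text{for all } r \geqslant r_0.$$

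Finally, dividing by $Nr$ and letting $r \to \infty$, since the Waldschmidt constant $\ahat(I) = \lim_{m \to \infty} \alpha(I^{(m)})/m$ exists and can be computed along any subsequence, I conclude
$$\ahat(I) \;=\; \lim_{r \to \infty} \frac{\alpha(I^{(Nr)})}{Nr} \;\geqslant\; \frac{\alpha(I) + N - 1}{N},$$
which is Chudnovsky's inequality. There is no genuine obstacle at this stage: the deep content lies in Theorem \ref{theorem: Containment for general points}, and through it in Theorem \ref{theorem: lowerboundsforallnumber} and the Cremona reduction machinery of Section \ref{sec.reduction}; the present theorem is a formal consequence via the standard passage from a containment of ideals to the corresponding bound on initial degrees.
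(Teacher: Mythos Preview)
Your proof is correct and follows essentially the same route as the paper: invoke the stable Harbourne--Huneke containment from Theorem \ref{theorem: Containment for general points}, compare initial degrees, divide by $Nr$, and pass to the limit. You are in fact slightly more careful than the paper in explicitly separating off the cases $N\leqslant 3$ and $s\leqslant N+3$ before appealing to the containment, but the core argument is identical.
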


\begin{proof}
	By Theorem \ref{theorem: Containment for general points} we get 
	$$I^{(Nr)} \subseteq \mm^{Nr}I^r, \text{ for } r \gg 0.$$
	By taking the initial degree in each side, $$\alpha \big(I^{(Nr)} \big) \geqslant r(N-1)+r\alpha(I) .$$
	After dividing by $Nr$, and taking limit as $r \to \infty$, we get 
	$\ahat \big( I\big) \geqslant \dfrac{\alpha(I)+N-1}{N}.$
\end{proof}

\begin{remark}
	Incorporating with the results in \cite{bghn2021chudnovskys}, we are able to show the stable Harbourne-Huneke containment and Chudnovsky's conjecture for any number of general points in $\PP^N$ for all $N\geqslant 4$. Combining with the results in \cite{HaHu} for $N=2$ and  and in \cite{dumnicki2012symbolic, Dumnicki2015} for $N=3$, the stable Harbourne-Huneke containment and Chudnovsky's conjecture for any number of general points in $\PP^N$ for all $N$. It is still wide open whether each of the conjectures holds for any set of points. There are only a few affirmed answers, one is that Chudnovsky's conjecture holds for $s \leqslant {N+2 \choose N}-1$ number of points (see \cite{FMX2018}), and one is for all number of points in $\PP^2$ \cite{HaHu}. Abu Thomas informed us that he also used Cremona transformation technique to prove Chudnovsky's conjecture for $15$ linearly general points in $\PP^4$ see \cite[Theorem 5.1.5]{Abu}. The preliminaries work using the method in this paper, which proved the conjectures for any number of general points $N\leqslant 9$, can be found in the second author's thesis \cite{Thaithesis}.
\end{remark}

\bibliographystyle{alpha}
\bibliography{References}

\end{document}